\documentclass[a4paper,reqno,10pt]{amsart}
    \DeclareMathSizes{12}{12}{7}{6}
\usepackage{geometry}
\usepackage{amssymb,amsmath,mathrsfs,bm}
\usepackage{paralist}
\usepackage[usenames]{color}
\usepackage[all]{xy}
\usepackage{url}
\usepackage{braket}
\usepackage{graphicx}
\usepackage{transparent}
\usepackage{xcolor}
\usepackage[shortlabels]{enumitem}
\usepackage{mathtools}
\usepackage{tikz}
\usepackage{tikz-cd}

\usepackage{wasysym} % for pentagon symbol
\usepackage{stmaryrd} % \longmapsfrom
\makeatletter

\@addtoreset{equation}{section}
\makeatother
\usepackage{amsthm}
\usepackage{aliascnt}
\newtheorem{theorem}{Theorem}[section]
\newaliascnt{lemma}{theorem}
\newtheorem{lemma}[lemma]{Lemma}
\aliascntresetthe{lemma}

\newaliascnt{proposition}{theorem}
\newtheorem{proposition}[proposition]{Proposition}
\aliascntresetthe{proposition}

\newaliascnt{corollary}{theorem}
\newtheorem{corollary}[corollary]{Corollary}
\aliascntresetthe{corollary}

\theoremstyle{definition}
\newaliascnt{definition}{theorem}
\newtheorem{definition}[definition]{Definition}
\aliascntresetthe{definition}

\theoremstyle{definition}
\newaliascnt{remark}{theorem}
\newtheorem{remark}[remark]{Remark}
\aliascntresetthe{remark}

\newaliascnt{setup}{theorem}
\theoremstyle{definition}

\aliascntresetthe{setup} 

\newaliascnt{example}{theorem}
\newtheorem{example}[example]{Example}
\aliascntresetthe{example}

\newaliascnt{condition}{theorem}

\aliascntresetthe{condition}

\newaliascnt{construction}{theorem}

\aliascntresetthe{construction}

\newaliascnt{question}{theorem}
\newtheorem{question}[question]{Question}
\aliascntresetthe{question}

\newaliascnt{conjecture}{theorem}

\aliascntresetthe{conjecture}

\usepackage[hyperfootnotes=false]{hyperref}
\usepackage[capitalise,noabbrev]{cleveref}
\crefname{theorem}{Theorem}{Theorems}
\crefname{lemma}{Lemma}{Lemmas}
\crefname{proposition}{Proposition}{Propositions}
\crefname{corollary}{Corollary}{Corollaries}
\crefname{definition}{Definition}{Definitions}
\crefname{remark}{Remark}{Remarks}
\crefname{example}{Example}{Examples}
\crefname{condition}{Condition}{Conditions}
\crefname{construction}{Construction}{Constructions}
\crefname{claim}{Claim}{Claims}
\crefname{mainthm}{Theorem}{Theorems}
\crefname{maincor}{Corollary}{Corollaries}
\crefname{setup}{Setup}{Setups}

\DeclareMathOperator{\Mor}{Mor}

\DeclareMathOperator{\Hom}{Hom}
\DeclareMathOperator{\Mod}{\mathsf{Mod}}
\renewcommand{\mod}{\operatorname{\mathsf{mod}}}
\DeclareMathOperator{\defect}{\mathsf{def}}
\DeclareMathOperator{\coh}{\mathsf{coh}}
\DeclareMathOperator{\Ex}{\mathsf{Ex}}

\DeclareMathOperator{\add}{\mathsf{add}}

\newcommand{\op}{\mathsf{op}}

\renewcommand{\Im}{\operatorname{Im}}
\newcommand{\Cok}{\operatorname{Cok}}

\newcommand{\id}{\mathsf{id}}
\newcommand{\xto}{\xrightarrow}

\newcommand{\dg}{{\rm dg}}
\newcommand{\tr}{\mathsf{tr}}
\newcommand{\pretr}{\mathsf{pretr}}

\DeclareMathOperator{\cpx}{\mathrm{Cpx}}
\DeclareMathOperator{\Tot}{\mathrm{Tot}}
\DeclareMathOperator{\Fun}{\mathrm{Fun}}

\newcommand{\all}{\rm all}

\DeclareMathOperator{\Filt}{\mathrm{Filt}}

\newcommand{\lra}{\longrightarrow}
\newcommand{\dra}{\dashrightarrow}
\DeclareMathOperator{\cone}{\mathsf{Cone}}

\newcommand{\wtil}[1]{\widetilde{#1}}

	\newcommand{\deff}{\coloneqq}
	\newcommand{\sse}{\subseteq}

    \newcommand{\fs}{\mathfrak{s}}

	\newcommand{\BE}{\mathbb{E}}
	\newcommand{\BF}{\mathbb{F}}

	\newcommand{\CA}{\mathcal{A}}
	
	\newcommand{\CC}{\mathcal{C}}
	\newcommand{\CD}{\mathcal{D}}

	\newcommand{\CH}{\mathcal{H}}

	\newcommand{\CM}{\mathcal{M}}

	\newcommand{\CS}{\mathcal{S}}
	
	\newcommand{\CU}{\mathcal{U}}

    \newcommand{\A}{\mathscr{A}}
	\newcommand{\B}{\mathscr{B}}

    \newcommand{\T}{\mathscr{T}}

\usetikzlibrary{matrix,arrows,decorations.pathmorphing,positioning,decorations.pathreplacing}
\tikzset{commutative diagrams/.cd, 
mysymbol/.style = {start anchor=center, end anchor = center, draw = none}}
\tikzset{
labl/.style={anchor=north, rotate=90, inner sep=1mm}
}

\let\amph=& %needed for matrix environments in arrow labels
	
\tikzcdset{every label/.append style = {font = \footnotesize}}

\begin{document}
\setlength{\baselineskip}{15pt}
%%%%%%%%%%%%%%%%%%%%%%%%%%%%%%%%%%%%%%%%%%%%%%%%%%%%%%%%%%%%%%%%%
\title[Intrinsic classification of exact dg structures]
{Exact dg structures are classified intrinsically by bi-Serre subcategories}

\author[Ogawa]{Yasuaki Ogawa}
	\address{Faculty of Engineering Science, Kansai University, Suita-shi, Osaka 564-8680, Japan}
	\email{y\underline{ }ogawa@kansai-u.ac.jp} %

\keywords{%
exact dg category, exact substructure, extriangulated category,
Serre subcategory, dg category
}

\subjclass[2020]{%
Primary 18G35; Secondary 18E10, 18G80}

\begin{abstract}
The aim of this article is to leverage Enomoto's classification of exact structures to the dg level.
Let $\A$ be a connective additive idempotent complete dg category.
We establish an intrinsic Enomoto-type classification of exact dg structures on $\A$ in terms of bi-Serre subcategories in $\tr(\A)$.
Our proof is a direct dg argument based on dg totalizations of $3$-term complexes and dg
duality, rather than a reduction to the greatest exact dg structure and the classification of its substructures.
As a consequence, we establish a bijection between exact dg structures on $\A$ and bi-Serre subcategories in $\tr(\A)$.
Moreover, this bijection is an isomorphism of posets and yields a complete lattice structure on the class of exact dg structures on $\A$.
In particular, we provide another construction of Rump--Chen's greatest exact dg structure on $\A$.
\end{abstract}
\maketitle
\tableofcontents

%%%%%%%%%%%%%%%%%%%%%%%%%%%%%%%%%%%%%%%%%%%%%%%%%%%%%%%%%%%%%%%%%
%%%%%%%%%%%%%%%%%%%%%%%%%%%%%%%%%%%%%%%%%%%%%%%%%%%%%%%%%%%%%%%%%
\section*{Introduction}\label{sec:intro}
%%%%%%%%%%%%%%%%%%%%%%%%%%%%%%%%%%%%%%%%%%%%%%%%%%%%%%%%%%%%%%%%%
%%%%%%%%%%%%%%%%%%%%%%%%%%%%%%%%%%%%%%%%%%%%%%%%%%%%%%%%%%%%%%%%%
Quillen exact categories \cite{Qui73} and Grothendieck--Verdier triangulated categories \cite{Ver96} constitute fundamental frameworks for homological algebra.
They share the basic philosophy that both are designed to capture universal phenomena of a homological nature, and indeed exhibit many analogous structures.
On the other hand, their intersection consists essentially of semisimple categories, vividly illustrating the substantial conceptual gap between the two frameworks.
Nevertheless, Nakaoka and Palu introduced extriangulated categories, which encompass both exact and triangulated categories.
Since then, considerable effort has been devoted to unifying arguments that had previously been developed independently in the exact and triangulated settings.
Extriangulated categories are not merely a convenient device for such unification: they have also revealed structural phenomena that become visible in this broader framework, notably in the theories of cotorsion pairs \cite{LN19,HS20,MNO26} and Auslander--Reiten theory \cite{INP24}, and appear to be gaining acceptance as a foundational framework for homological algebra.
More recently, the dg enhancement of extriangulated categories introduced by Xiaofa Chen \cite{Che23,Che24a} has further advanced this line of research.

At this point, it is important to recall that some essential homological information cannot be captured by triangulated categories alone.
For example, higher $K$-theory of exact categories, introduced as a higher-dimensional extension of the Grothendieck group \cite{Qui73}, cannot be formulated as a functor on triangulated categories satisfying the expected properties \cite{Sch02}.
One solution is to pass to dg enhancements.
The standard dg enhancements of triangulated categories are provided by pretriangulated dg categories, and higher $K$-theory can be formulated with the desired properties once its domain is replaced by pretriangulated dg categories \cite{Wal85,TT90,Sch06}.
Since extriangulated categories include triangulated categories, they inevitably encounter the same difficulty.
From this perspective, Chen's introduction of a dg enhancement of extriangulated categories is particularly useful.
His definition is obtained by adapting Quillen's axioms for exact categories to the dg setting, and the term \emph{exact dg category} aptly reflects this construction.
If a dg category $\A$ is equipped with an exact dg structure $\CS$, then its homotopy category $H^0(\A)$ carries a canonical extriangulated structure.

The aim of this paper is to classify the exact dg structures that can be imposed on a given dg category $\A$.
Our starting point is Enomoto's classification of Quillen exact structures on an additive category $\CA$ \cite{Eno18}.
Let us describe his result in some detail.
Consider the full subcategory $\CC_2(\CA)$ of the functor category $\Mod\CA$ consisting of $\CA$-modules having both projective dimension and grade equal to $2$.
Enomoto's classification theorem establishes a bijection between exact structures on $\CA$ and bi-Serre subcategories of $\CC_2(\CA)$:
\begin{equation}\label{Enomoto_SM}
\Set{\text{exact structures on }\CA}
\underset{\mathsf{S}}{\overset{\mathsf{M}}{\rightleftarrows}}
\Set{\text{bi-Serre subcategories of }\CC_2(\CA)}.
\end{equation}
These assignments are explicit.
Given an exact category $(\CA,\CS)$, the assignment $\mathsf{M}$ associates with it the full subcategory consisting of all Auslander defect modules.
Conversely, the assignment $\mathsf{S}$ associates with a bi-Serre subcategory the $3$-term complexes in $\CA$ arising from projective resolutions of the modules belonging to that subcategory.

The main theorem of this paper is precisely a dg analogue of Enomoto's result and takes exactly the same form.

\begin{theorem}[{\cref{thm:correspondence}}]
\label{thm:A}
Let $\A$ be a connective additive dg category and assume that $\A$
is idempotent complete.
Then there exists a bijection between the following classes:
\begin{enumerate}[label=\textup{(\alph*)}]
\item
exact dg structures on $\A$;
\item
bi-Serre subcategories of $\CC_2(\A)\sse\tr(\A)$.
\end{enumerate}
\end{theorem}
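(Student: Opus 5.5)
We construct mutually inverse assignments $\mathsf{M}$ and $\mathsf{S}$ exactly parallel to Enomoto's \eqref{Enomoto_SM}. Here the module category is replaced by the pretriangulated hull $\tr(\A)$, and the $3$-term complexes are replaced by their dg totalizations. Throughout, $\CC_2(\A)\sse\tr(\A)$ denotes the full subcategory of totalizations $\Tot(X)$ of $3$-term complexes $X\colon A\to B\to C$ in $\A$ (with a chosen homotopy) such that $\Tot(X)$ has no nonzero morphisms into $\A$ in the relevant degrees (``grade $2$'') both covariantly and contravariantly. Connectivity and idempotent completeness are used to make this well defined up to homotopy equivalence.

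\textbf{Step 1: the assignment $\mathsf{M}$.} Given an exact dg structure $\CS$, let $\mathsf{M}(\CS)$ be the full subcategory of $\tr(\A)$ consisting of objects homotopy equivalent to $\Tot(X)$ for conflations $X\in\CS$. We first check that $\mathsf{M}(\CS)\sse\CC_2(\A)$. For this, the homotopy left and right exactness of conflations (the defining Yoneda conditions) should translate into the vanishing of $\tau_{\le 0}\Hom(\Tot X,\A)$ and its dual, via the dg duality $\A\to\A^{\op}$, which exchanges $\Tot$ of $X$ with $\Tot$ of $X^{\op}$. Next we show that $\mathsf{M}(\CS)$ is bi-Serre, meaning it is closed under extensions, subobjects and quotients in the ``abelian-like'' heart structure of $\CC_2(\A)$ used in the paper. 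Closure under extensions comes from the axiom that conflations are closed under composition of inflations and deflations: an extension of $\Tot X$ by $\Tot Y$ is realized by a $3\times 3$-type diagram whose totalization is $\Tot$ of a composite conflation. Closure under subobjects and quotients comes from stability of conflations under homotopy pullback and pushout.

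\textbf{Step 2: the assignment $\mathsf{S}$.} Given a bi-Serre $\CD\sse\CC_2(\A)$, let $\mathsf{S}(\CD)$ be the class of $3$-term homotopy complexes $X$ in $\A$ with $\Tot(X)\in\CD$. Membership of $\Tot X$ in $\CC_2(\A)$ gives exactly that $X$ is a homotopy short exact sequence, i.e.\ a homotopy kernel--cokernel pair. We then verify the exact dg axioms. Split sequences have contractible totalization, and $0\in\CD$. Stability under homotopy pullback along arbitrary maps follows from the Serre property: the pulled-back totalization is a quotient (or subobject) of the original in $\CC_2(\A)$. The hardest axiom is that composition of deflations is again a deflation. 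Here we use the octahedral-type $3\times3$ diagram in $\tr(\A)$ to exhibit $\Tot$ of the composite as an extension of the two given totalizations, and then apply extension closure.

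\textbf{Step 3: bijectivity.} The composite $\mathsf{S}\mathsf{M}(\CS)=\CS$ requires that if $\Tot(X)\simeq\Tot(Y)$ with $Y\in\CS$, then $X\in\CS$. This uses a dg analogue of the ``defect determines the sequence up to split summands'' lemma: any homotopy exact $X$ is recovered from $\Tot X$ up to adding split conflations, which is where idempotent completeness enters. The composite $\mathsf{M}\mathsf{S}(\CD)=\CD$ requires every object of $\CC_2(\A)$ to be a totalization of a homotopy exact $3$-term complex, which holds by the definition of $\CC_2(\A)$. I expect the main obstacle to be Steps~1--2 in proving that the relevant subobjects and quotients in $\CC_2(\A)$ correspond precisely to homotopy pullbacks and pushouts of conflations. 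To handle this, I would first establish a lemma computing morphisms $\Tot X\to\Tot Y$ in $\tr(\A)$ via connectivity, reducing them to homotopy classes of morphisms of $3$-term complexes.
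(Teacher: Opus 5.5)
Your architecture matches the paper's: the maps $\mathsf M$ and $\mathsf S$, Serre-type closure for (Ex1) and (Ex2), and saturation for $\mathsf S\mathsf M=\id$. However, the central definition is misread, and this leaves a real hole.

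\textbf{The dual half of bi-Serre is missing.} In the paper, $\CM\subseteq\CC_2(\A)$ is bi-Serre when two conditions hold. First, $\CM$ is a Serre subcategory of the abelian category $\coh H^0(\A)$. Second, its shifted dual $\CM^\dagger$ (the essential image of $\CM$ under $\Sigma^2(-)^\vee\colon\CC_2(\A)^{\op}\to\CC_2(\A^{\op})$) is a Serre subcategory of $\coh H^0(\A^{\op})$. You work with a single Serre condition, and this has two consequences.
\begin{itemize}
\item In Step~1 the second half is never checked. You already have the key ingredient, namely that duality exchanges $\Tot X$ and $\Tot X^{\op}$, but you use it only for membership in $\CC_2$. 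The paper uses $M_{\xi^{\op}}\cong\Sigma^2M_\xi^\vee$ (\cref{lem:duality_on_defects}) to get $\CM_\CS^\dagger=\CM_{\CS^{\op}}$, and then applies the one-sided result to the exact dg category $(\A^{\op},\CS^{\op})$.
\item In Step~2 nothing yields (Ex2$^{\op}$). Existence of homotopy pushouts of inflations is governed by quotients of the covariant defect $\Sigma^2M_\xi^\vee$ in $\coh H^0(\A^{\op})$, which a condition on $M_\xi$ alone does not control. The paper obtains (Ex2$^{\op}$) by running its (Ex2) argument for $\A^{\op}$ and $\CM^\dagger$, using $(\CS_\CM)^{\op}=\CS_{\CM^\dagger}$. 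Its arguments for (Ex0), (Ex1), (Ex2) use only the first half; (Ex2$^{\op}$) is exactly where the second half is needed.
\end{itemize}

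\textbf{The axioms require existence, not just stability.} You phrase (Ex1) and (Ex2) as stability statements. A priori, neither a homotopy pullback of a deflation $g$ along an arbitrary $u\colon C'\to C$ nor a kernel of a composite $g'g$ need exist in $\A$. The missing tool is \cref{lem:realization_of_projective_presentation}: any $N\in\CC_2(\A)$ with a prescribed presentation $H^0(B)\xrightarrow{H^0(g)}H^0(C)\to N\to0$ is $M_\eta$ for a short exact $\eta$ ending in $g$.
\begin{itemize}
\item For (Ex2), take $N=\Cok\bigl(p_1\circ H^0(u)\bigr)$, where $p_1\colon H^0(C)\to M_\xi$ is the canonical epimorphism. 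Then $N$ lies in $\CM\subseteq\CC_2(\A)$ because it is a quotient of $M_\xi$ in $\coh H^0(\A)$.
\item Realizing $N$ with the presentation $[-H^0(u),\,H^0(g)]$ produces the homotopy bicartesian square, with $M_{\xi'}=\ker(M_\xi\to N)$.
\item This is why the Serre condition must live in $\coh H^0(\A)$ rather than inside $\CC_2(\A)$. The latter is not abelian and in general not closed under quotients.
\item The same device produces the kernel of $g'g$ in (Ex1). There the defect of the composite is an extension of $M_{\xi'}$ by a \emph{quotient} of $M_\xi$, not by $M_\xi$ itself.
\end{itemize}

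\textbf{Step 3.} $\CC_2(\A)$ is defined by vanishing conditions in $\tr(\A)$. So ``every object is a totalization'' is a theorem, not a definition: it is \cref{lem:weight_amplitude}, proved via the bounded weight structure with heart $H^0(\A)$, and it uses idempotent completeness. Your dg Schanuel-type claim for $\mathsf S\mathsf M=\id$ is left unproved. The paper instead argues as follows:
\begin{itemize}
\item it lifts $M_\xi\cong M_{\xi'}$ to a morphism of $3$-term complexes;
\item it reduces, by pulling back along $r_2$, to the case $r_2=\id_C$;
\item it deduces $\Im H^0(g)=\Im H^0(g')$, so $g'$ is homotopic to $gq$ for some $q\colon B'\to B$;
\item it concludes via \cite[Proposition~4.11(b)]{Che24a} that $g$ is a deflation.
\end{itemize}
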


A key technical ingredient in the proof is a dg-level refinement of the totalization construction. We construct totalization as a dg functor $\Tot_\A$ and establish its compatibility with the shifted dg duality $\Sigma^2(-)^\vee$:
\[
\begin{tikzcd}
\cpx^3_{\dg}(\A)^{\op}
\arrow{r}{(-)^{\op}}
\arrow{d}[swap]{\Tot_\A^{\op}}
&
\cpx^3_{\dg}(\A^{\op})
\arrow{d}{\Tot_{\A^{\op}}}
\\
\pretr(\A)^{\op}
\arrow{r}[swap]{\Sigma^2(-)^\vee}
&
\pretr(\A^{\op}),
\end{tikzcd}
\]
see \cref{lem:duality_on_defects}.
This compatibility makes the duality underlying the bi-Serre condition explicit at the dg level.
Here $\CC_2(\A)\sse\tr(\A)$ is the dg analogue of Enomoto's category $\CC_2(\CA)$, see \cref{def:C2}.
As a full subcategory of $\tr(\A)$, it is characterized by simple vanishing conditions.
It follows from \cref{thm:A} that the poset of bi-Serre subcategories forms a complete lattice and, consequently, that $\A$ admits a greatest exact dg structure.

Chen, on the other hand, proved the existence of the greatest exact dg structure directly by adapting Rump's method \cite{Rum19} to the dg setting.
Combining this result with the classification of extriangulated substructures established in \cite{Eno21} yields another classification theorem, which differs slightly from \cref{thm:A}.

\begin{theorem}[\cref{thm:correspondence_Chen}]
\label{thm:B}
Let $\A$ be a connective additive dg category.
Let $\CM_{\max}$ denote the full subcategory corresponding to the
greatest exact dg structure.
Then there exists a bijection between the following classes:
\begin{enumerate}[label=\textup{(\alph*)}]
\item
exact dg structures on $\A$;
\item[\textup{(b')}]
Serre subcategories of $\CM_{\max}$.
\end{enumerate}
\end{theorem}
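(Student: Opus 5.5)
We derive \cref{thm:B} from Chen's existence theorem for a greatest exact dg structure, together with Enomoto's classification of extriangulated substructures \cite{Eno21} applied on $H^0(\A)$. The plan has three steps: pass from exact dg structures to substructures of the greatest one, pass to the homotopy category, and identify the resulting classifying data with Serre subcategories of $\CM_{\max}$.

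\textbf{Step 1.} Let $\CS_{\max}$ be Chen's greatest exact dg structure on $\A$. Every exact dg structure $\CS$ on $\A$ is contained in $\CS_{\max}$. Hence
\[
\Set{\text{exact dg structures on }\A}=\Set{\text{exact dg substructures of }\CS_{\max}}.
\]
Recall that an exact dg structure is a class of homotopy short exact sequences closed under homotopy equivalence. Passing to $H^0$ gives an extriangulated structure on $H^0(\A)$, encoded by a closed subbifunctor $\BF\sse\BE_{\max}$, where $\BE_{\max}$ is the bifunctor of the greatest extriangulated structure induced by $\CS_{\max}$.

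\textbf{Step 2.} We show that the assignment $\CS\mapsto\BF_\CS$ is a bijection from exact dg substructures of $\CS_{\max}$ onto closed subbifunctors of $\BE_{\max}$. Injectivity should hold because membership of a conflation in $\CS$ depends only on its class in $H^0$. The point is that the axioms refer to homotopy pullbacks and pushouts, and these are determined, up to the homotopy equivalences under which $\CS$ is closed, by the corresponding extensions $\delta\in\BE_{\max}(C,A)$.

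For surjectivity, start from a closed subbifunctor $\BF$ and let $\CS_\BF$ consist of the sequences in $\CS_{\max}$ whose extension class lies in $\BF$. The exact dg axioms must then be checked for $\CS_\BF$:
\begin{itemize}
\item stability under homotopy pushout and pullback follows from $\BF$ being a subbifunctor;
\item closure under composition of inflations and of deflations follows from the closedness of $\BF$, which is equivalent to the (ET4) axioms of the substructure.
\end{itemize}
I expect this dg-lifting step to be the main obstacle. The difficulty is that composites and homotopy cartesian squares in $\A$ must be compared with the corresponding extriangles, which requires Chen's description of the extriangulated structure on $H^0(\A)$ via $\tau_{\le 0}$ of Hom complexes. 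I would first try to reduce everything to conflations built as cones inside $\pretr(\A)$, where the comparison is just the triangulated one.

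\textbf{Step 3.} Apply \cite{Eno21}, which gives a bijection between closed subbifunctors of $\BE_{\max}$ and Serre subcategories of the category of defects of $(H^0(\A),\BE_{\max})$, sending $\BF$ to its defects. We then identify this defect category with $\CM_{\max}\sse\tr(\A)$. By definition $\CM_{\max}$ consists of totalizations $\Tot_\A$ of the $3$-term complexes in $\CS_{\max}$, and under the Yoneda functor from $\tr(\A)$ into $H^0(\A)$-modules such a totalization maps to the Auslander defect of the corresponding conflation.

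The comparison needs two facts. First, this functor is fully faithful on $\CM_{\max}$; this should follow from the vanishing conditions defining $\CC_2(\A)$. Second, it is exact, so that Serre subcategories correspond to Serre subcategories. Composing the bijections of Steps 1--3 yields \cref{thm:B}. Idempotent completeness is not needed here, since neither Chen's construction nor \cite{Eno21} uses it.
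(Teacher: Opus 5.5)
Your proposal follows essentially the paper's route: reduce to exact dg substructures of Chen's greatest structure $\CS_{\max}$, classify those by Serre subcategories of $\defect\BE_{\CS_{\max}}$, and identify $\defect\BE_{\CS_{\max}}$ with $\CM_{\max}$ via the heart equivalence $H^0\colon\CH_\A\xrightarrow{\sim}\Mod H^0(\A)$. The paper simply cites Chen's substructure theorem (\cref{prop:bijection_between_substructures}), which already covers your Steps~2 and~3, so the dg-lifting step you flag as the main obstacle need not be re-proved; you should also drop the fallback of treating conflations as cones in $\pretr(\A)$, because connectivity gives $\Hom_{\tr(\A)}(C,\Sigma A)=0$, so a non-split conflation $A\to B\to C$ is never a triangle in $\tr(\A)$ (equivalently, its totalization $M_\xi$ is nonzero).
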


The two classification theorems have complementary advantages.
Theorem~\ref{thm:B} describes exact dg structures in terms of ordinary Serre subcategories.
Its ambient category $\CM_{\max}$, however, is defined through the greatest exact dg structure and is therefore not specified a priori solely in terms of the underlying dg category $\A$.
By contrast, Theorem~\ref{thm:A} involves the apparently stronger condition of being bi-Serre, but its ambient category $\CC_2(\A)$ is defined intrinsically from $\A$ by explicit vanishing conditions in $\tr(\A)$, independently of any choice of an exact dg structure.
Comparing the two classifications also determines the position of $\CM_{\max}$ inside $\tr(\A)$: namely, $\CM_{\max}$ is the
greatest bi-Serre subcategory of $\CC_2(\A)$.

\medskip
\noindent
{\bf Notation and conventions.}
Throughout this paper, all dg categories are assumed to be small and additive, unless otherwise stated. Here a dg category $\A$ is called \emph{additive} if its homotopy category $H^0(\A)$ is additive.
We write $\CA=H^0(\A)$.

All dg modules and modules over additive categories are right modules.
For an additive category \(\CA\), we denote by $\Mod\CA$ and $\mod\CA$ the categories of right $\CA$-modules and finitely presented right $\CA$-modules, respectively.

A full subcategory of a category is always assumed to be closed under isomorphisms.
We say that a dg category $\A$ is \emph{idempotent complete} if $H^0(\A)$ is idempotent complete.

\medskip
\noindent
{\bf Use of AI.}
The author used OpenAI ChatGPT 5.6 Sol and Codex 5.6 Sol, both with medium reasoning effort, as auxiliary tools for locating relevant results in the literature, checking formal computations and sign conventions and polishing the English and \LaTeX{} presentation.
All mathematical arguments and citations were independently verified and finalized by the author.
The author takes full responsibility for the accuracy, originality, and integrity of the paper.

%%%%%%%%%%%%%%%%%%%%%%%%%%%%%%%%%%%%%%%%%%%%%%%%%%%%%%%%%%%%%%%%%
\section{On exact dg categories}\label{sec:exact_dg}
%%%%%%%%%%%%%%%%%%%%%%%%%%%%%%%%%%%%%%%%%%%%%%%%%%%%%%%%%%%%%%%%%
%%%%%%%%%%%%%%%%%%%%%%%%%%%%%%%%%%%%%%%%%%%%%%%%%%%%%%%%%%%%%%%%%
\subsection{Definition and basic properties}\label{subsec:basics_of_exact_dg}
%%%%%%%%%%%%%%%%%%%%%%%%%%%%%%%%%%%%%%%%%%%%%%%%%%%%%%%%%%%%%%%%%
We first recall the definition of an exact dg category from
\cite{Che23,Che24a}.
This notion is a dg analogue of Quillen's exact categories and includes pretriangulated dg categories as special cases.
Unless otherwise stated, all dg categories are assumed to be \emph{additive}.

\begin{definition}\label{def:connective_dg_cat}
A dg category $\A$ is said to be \emph{connective} if
$H^i\A(X,Y)=0$ for all objects $X,Y\in\A$ and all $i>0$.
For an arbitrary dg category $\A$, its \emph{connective cover}
$\tau_{\leq0}\A$ is the dg category with the same objects as $\A$ and with morphism complexes $(\tau_{\leq0}\A)(X,Y)=\tau_{\leq0}\bigl(\A(X,Y)\bigr)$.
Note that the canonical dg functor $\tau_{\leq0}\A\to\A$ induces an isomorphism $H^0(\tau_{\leq0}\A)\xrightarrow{\sim}H^0\A$.
\end{definition}

\begin{example}\label{ex:connective_cover}
Every additive category, regarded as a dg category concentrated in degree zero, is connective.
More importantly for our purposes, if $\T'$ is a pretriangulated dg category, then its connective cover $\tau_{\leq0}\T'$ is a connective dg enhancement of the triangulated category $H^0\T'$.
\end{example}

A \emph{$3$-term complex} $\xi$ is a sextuple $(A,B,C,f,g,h)$ defined as a diagram in $\A$ of the form below
\begin{equation}\label{diag:3-term_complex}
\begin{tikzcd}[row sep=0.6cm]
A \arrow{r}{f}\arrow[bend right]{rr}[swap]{h}& B\arrow{r}{g} & C,
\end{tikzcd}
\end{equation}
where $|f|=|g|=0, |h|=-1$ and $d(f)=0, d(g)=0$ and $d(h)=-gf$.
To save space, the diagram \eqref{diag:3-term_complex} is often abbreviated as 
\[
\xi=(A\xto{f}B\xto{g}C,h).
\]
We denote by $\cpx^3(\A)$ the category whose objects are the
$3$-term complexes in $\A$.
Let
\[
\xi'=(A'\xto{f'}B'\xto{g'}C',h')
\]
be another $3$-term complex.
A morphism from $\xi$ to $\xi'$ in $\cpx^3(\A)$ is represented by
a homotopy class of sextuples
$
\Xi=(r_0,r_1,r_2,s_1,s_2,t)
$
illustrated by the following diagram:
\begin{equation}\label{diag:morph_of_3-term_complex}
\begin{tikzcd}[row sep=0.8cm]
A
 \arrow{r}{f}
 \arrow[bend left]{rr}{h}
 \arrow{d}[swap]{r_0}
 \arrow[draw=red]{rd}[swap]{\textcolor{red}{s_1}}
 \arrow[draw=blue]{rrd}{\textcolor{blue}{t}}
&B
 \arrow{r}{g}
 \arrow{d}[swap]{r_1}
 \arrow[draw=red]{rd}{\textcolor{red}{s_2}}
&C\arrow{d}{r_2}
\\
A'
 \arrow{r}[swap]{f'}
 \arrow[bend right]{rr}[swap]{h'}
&B'\arrow{r}[swap]{g'}
&C'.
\end{tikzcd}
\end{equation}
Here, $|r_i|=0, d(r_i)=0$ for $i=0,1,2$,
\[
\begin{aligned}
|s_1|=-1,
&&d(s_1)=f'r_0-r_1f,\\
|s_2|=-1,
&&d(s_2)=g'r_1-r_2g,
\end{aligned}
\]
and
\[
t\colon A\to C',
\qquad |t|=-2,
\]
satisfies
$
d(t)
=
r_2\circ h-h'\circ r_0-s_2\circ f-g'\circ s_1
$.
We refer to \cite[\S 3.2]{Che24a} for the homotopy relation and
the composition of morphisms in $\cpx^3(\A)$, see also \S \ref{subsec:totalization_dg}.

\begin{lemma}\label{lem:isomorphisms_in_cpx3}
\cite[Proposition~3.15]{Che24a}
Let
$
\Xi\colon\xi\to\xi'
$
be a morphism in $\cpx^3(\A)$ represented by the diagram \eqref{diag:morph_of_3-term_complex}. Then $\Xi$ is an isomorphism if and
only if $r_0,r_1,r_2$ are isomorphisms in $H^0\A$.
\end{lemma}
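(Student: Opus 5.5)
One direction is formal: a functor preserves isomorphisms, so it is enough to show that each $r_i$ is the image of $\Xi$ under a functor $\cpx^3(\A)\to H^0\A$. Concretely, I would check that the assignment $\xi\mapsto A$, $\Xi\mapsto[r_0]$ (and likewise for $B,C$) respects the homotopy relation and composition of \cite[\S3.2]{Che24a}. Homotopic sextuples differ in their $r_i$ by coboundaries, and the $r$-components of a composite are the composites of the $r$-components. It follows that if $\Xi$ is an isomorphism, then $r_0,r_1,r_2$ are isomorphisms in $H^0\A$.

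For the converse I would construct an inverse. Choose closed degree-zero maps $r_i'$ with $[r_i'][r_i]=\id$ and $[r_i][r_i']=\id$ in $H^0\A$. Then choose degree $-1$ maps $u_i,v_i$ with $d(u_i)=r_i'r_i-\id$ and $d(v_i)=r_ir_i'-\id$. The goal is to complete $(r_0',r_1',r_2')$ to a morphism $\Xi'=(r_0',r_1',r_2',s_1',s_2',t')\colon\xi'\to\xi$, and then to show that $\Xi'\Xi$ and $\Xi\Xi'$ are homotopic to identities.

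Constructing $s_1'$ amounts to solving $d(s_1')=fr_0'-r_1'f'$. The right-hand side is closed, and its class satisfies
\[
[fr_0'-r_1'f']=[r_1'][r_1f-f'r_0][r_0']=0,
\]
using $[r_1f]=[f'r_0]$. One can in fact write down an explicit primitive built from $r_1's_1r_0'$, $u_1$ and $v_0$; $s_2'$ is handled in the same way. The map $t'$ of degree $-2$ is then needed to kill a degree $-1$ closed element, which I would again write explicitly in terms of the chosen homotopies. I would not argue through the vanishing of $H^{-1}$ obstructions here: connectivity bounds positive degrees only, so such a class need not vanish for degree reasons.

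The main obstacle is this second step, namely the bookkeeping of the higher components and of the homotopies witnessing that the composites are identities. To sidestep it I would use the totalization: a 3-term complex corresponds to an object of the dg category of "twisted complexes of length 3", i.e. one-sided twisted complexes $A\oplus B\oplus C$ with differential components $f,g,h$. Morphisms in $\cpx^3(\A)$ are $H^0$ of the strictly upper-triangular-compatible (filtration-preserving) part of the twisted Hom complex, and the homotopy relation matches this. Under this description the statement becomes the standard fact that a filtered morphism of one-sided twisted complexes is a homotopy equivalence in the filtered dg category once its associated graded components are homotopy equivalences. I would prove that fact by induction on the filtration length, using the cone of a graded-piece homotopy equivalence, which is contractible. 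Equivalently, I would invoke \cite[Proposition~3.15]{Che24a} directly, since the paper has stated it as a citation.
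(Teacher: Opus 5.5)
Your proposal is correct. It takes a different route from the paper, which gives no argument for this lemma and simply quotes \cite[Proposition~3.15]{Che24a}; your fallback of invoking Proposition~3.15 directly is literally the paper's route, and everything else you wrote is a self-contained proof.

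The forward direction is exactly right. By the formulas for $d_{\cpx}$ and for composition of sextuples, $\Xi\mapsto[r_i]$ defines three functors $\cpx^3(\A)\to H^0\A$, and functors preserve isomorphisms.

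For the converse, the key input is your identification of $\cpx^3_{\dg}(\A)$ with filtration-preserving (lower-triangular) morphisms between one-sided twisted complexes $\Sigma^2A\oplus\Sigma B\oplus C$. This is the same structure the paper encodes through $\iota\colon\cpx^3_{\dg}(\T)\to\Mor(\Mor(\T))$.

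From there, a Yoneda argument finishes more economically than cones. Fix any $\eta$ and consider postcomposition with $\Xi$ from $\cpx^3_{\dg}(\A)(\eta,\xi)$ to $\cpx^3_{\dg}(\A)(\eta,\xi')$. It respects the finite filtration by subcomplexes ($t$ only) $\sse$ ($r_0=r_1=r_2=0$) $\sse$ (everything). On the graded pieces it is postcomposition with $r_2$, then with $(r_1,r_2)$, then with $(r_0,r_1,r_2)$. Each of these is a homotopy equivalence of Hom complexes, so the whole map is a quasi-isomorphism and in particular bijective on $H^0$. Yoneda in $\cpx^3(\A)$ then makes $\Xi$ invertible.

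Your cone-based induction also works, with one adjustment. Since $\cone(r_i)\notin\A$, you must first pass to $\cpx^3_{\dg}$ over $\pretr(\A)$ or $\T$, which contains $\cpx^3_{\dg}(\A)$ fully faithfully, and build a filtered contraction of $\cone(\Xi)$ there.

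Compared with the citation, your route exhibits the lemma as the length-three case of the principle that a filtered map with invertible graded components is invertible. It also makes visible why only $H^0$-invertibility of the $r_i$ matters.

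You were right not to rely on the explicit inverse. The maps $s_1',s_2'$ are determined only up to closed degree $-1$ elements, and changing them shifts the class that obstructs $t'$. So a particular choice can genuinely fail even though some choice succeeds.
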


\begin{remark}
A $3$-term complex as in \eqref{diag:3-term_complex} is called a \emph{$3$-term homotopy complex} in \cite{Che23,Che24a}.
Throughout this paper, we omit the prefix ``homotopy'' for simplicity and use the shorter terms, such as $3$-term complex, kernel, and left exact, whenever no confusion is likely.
Moreover, our category $\cpx^3(\A)$ is denoted by
$\CH_{3t}(\A)$ in \cite[Definition~3.14]{Che24a}.
\end{remark}

\begin{remark}\label{rem:defining_diagram_of_totalization}
By a slight abuse of notation, we identify each object $X\in\A$
with the representable dg $\A$-module $\A(-,X)$ in $\CC_{\dg}(\A)$ via the Yoneda embedding $\A\to\CC_{\dg}(\A)$.
A $3$-term complex \eqref{diag:3-term_complex} gives rise to the following commutative diagram in $\CC_{\dg}(\A)$:
\begin{equation}\label{diag:defect}
\begin{tikzcd}[row sep=0.8cm]
A\arrow{r}{f}\arrow{d}[swap]{\begin{bsmallmatrix}
-f\\
-h
\end{bsmallmatrix}}
&B\arrow[equal]{d}{}\arrow{r}{\begin{bsmallmatrix}
0\\
1
\end{bsmallmatrix}}
&U_\xi \arrow{r}{\begin{bsmallmatrix}
1\amph 0
\end{bsmallmatrix}}\arrow{d}[swap]{\begin{bsmallmatrix}
-h\amph g
\end{bsmallmatrix}}\arrow[draw=red]{rd}{\textcolor{red}{s}}
&\Sigma A \arrow{d}{}
\\
V_\xi \arrow{r}[swap]{\begin{bsmallmatrix}
-1\amph 0
\end{bsmallmatrix}}
&B \arrow{r}[swap]{g}
&C \arrow{r}[swap]{\begin{bsmallmatrix}
0\\
1
\end{bsmallmatrix}}
\arrow{d}[swap]{\begin{bsmallmatrix}
0\\
0\\
1
\end{bsmallmatrix}}
&\Sigma V_\xi \arrow{d}{}
\\
{}
&{}
&M_{\xi} \arrow[equal]{r}{}\arrow{d}{}
&M_{\xi} \arrow{d}{}
\\
{}
&{}
&\Sigma U_\xi \arrow{r}
&\Sigma^2 A
\end{tikzcd}
\end{equation}
where we set $
U_\xi=\cone(f), V_\xi=\Sigma^{-1}\cone(g)$ and $M_\xi=\cone(\begin{bmatrix}
-h\amph g
\end{bmatrix})$.
Besides, the red arrow is
\[
s=
\begin{bmatrix}
0&-1\\
0&0
\end{bmatrix}.
\]
The two upper rows and the two rightmost columns of
\eqref{diag:defect} give rise to distinguished triangles in $\CD(\A)$.
For later use, we also put
\[
\varphi_\xi=
\begin{bmatrix}
-f\\
-h
\end{bmatrix}
\colon A\to V_\xi
\quad\text{and}\quad
\psi_\xi=
\begin{bmatrix}
-h\amph g
\end{bmatrix}
\colon U_\xi\to C .
\]
More precisely, by abuse of notation, $h$ also denotes the
corresponding degree-zero morphism $A\to C[-1]$.
\end{remark}

\begin{definition}\label{def:short_exact_sequence}
A $3$-term complex $\xi$ is \emph{left exact} if, for every
$X\in\A$ and every $i\leq0$, the morphism $\varphi_\xi$ induces an
isomorphism
\[
\Hom_{\CD(\A)}(X,\Sigma^iA)
\xrightarrow{\sim}
\Hom_{\CD(\A)}(X,\Sigma^iV_\xi).
\]
It is \emph{right exact} if, for every $X\in\A$ and every $i\leq0$,
the morphism $\psi_\xi$ induces an isomorphism
\[
\Hom_{\CD(\A)}(C,\Sigma^iX)
\xrightarrow{\sim}
\Hom_{\CD(\A)}(U_\xi,\Sigma^iX).
\]
A $3$-term complex which is both left and right exact is called a
\emph{short exact sequence}.
\end{definition}

We conclude this subsection by recalling Chen's definition of an exact dg category.
Throughout the article, we use $\CS\sse\cpx^3(\A)$ to denote an isomorphism-closed class of $3$-term complexes in $\A$.

\begin{definition}\label{def:exact_dg}
\cite[Def.~4.1]{Che24a}
Let $\A$ be an additive dg category.
An \emph{exact structure} on $\A$ is an isomorphism-closed class $\CS$ of objects in $\cpx^3(\A)$ consisting of short exact $3$-term complexes.
A $3$-term complex
\[
\xi=(X\xto{f}Y\xto{g}Z,h)
\]
of $\CS$ is called a \emph{conflation}, while $f$ and $g$ are called an \emph{inflation} and a \emph{deflation}, respectively.
The class $\CS$ is required to satisfy the following conditions:
\begin{enumerate}[label=\textup{(Ex\arabic*)},start=0,leftmargin=45pt]
\item\label{EX0}
$\id_A$ is a deflation for all $A\in\A$.
\item\label{EX1}
Deflations are closed under composition.
\item\label{EX2}
For any deflation $g\colon Y\to Z$ and any morphism $u\colon Z'\to Z$ in $Z^0\A$, a homotopy pullback of $g$ along $u$ exists, and the induced morphism $g'\colon Y'\to Z'$ is a deflation.
\end{enumerate}
\begin{enumerate}[
    label=\textup{(Ex2$^{\op}$)},
    ref=\textup{(Ex2$^{\op}$)},
    leftmargin=45pt
]
\item\label{EX2op}
The dual of \ref{EX2} holds.
\end{enumerate}
The pair $(\A,\CS)$ is called an \emph{exact dg category}.
\end{definition}

\begin{example}\label{ex:exact_category_as_exact_dg}
\cite[Example~4.6]{Che24a}
Let $\A$ be an additive category, regarded as a dg category
concentrated in degree zero.
Then a $3$-term complex
\[
A\xto{f}B\xto{g}C
\]
is short exact in the dg sense if and only if it is a kernel-cokernel
pair in $\A$.
Moreover, exact dg structures on $\A$ are precisely Quillen exact
structures on $\A$.
Thus, exact dg categories concentrated in degree zero are identified
with exact categories.
\end{example}

The following fundamental result shows that an exact dg category naturally provides a dg enhancement of an extriangulated category.

\begin{theorem}\label{thm:exact_dg_to_extri}
\cite[Theorem~4.30]{Che24a}
Let $(\A,\CS)$ be an exact dg category.
Then $H^0(\A)$ admits a canonical extriangulated structure
\[
\bigl(H^0(\A),\BE_\CS,\fs_\CS\bigr)
\]
in the sense of \cite{NP19}.
\end{theorem}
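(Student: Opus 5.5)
This is a cited result (Chen, Theorem 4.30), so the plan is to reconstruct Chen's argument. The route is to pass through the homotopy category and define $\BE_\CS$ via conflations. For $C,A\in\A$, let $\BE_\CS(C,A)$ be the set of isomorphism classes of conflations $\xi=(A\xto{f}B\xto{g}C,h)\in\CS$ with fixed end terms. Here an isomorphism is required to restrict to the identity on $A$ and $C$ in $H^0\A$, and these isomorphisms are detected by \cref{lem:isomorphisms_in_cpx3}.

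\begin{enumerate}[label=\textup{(\arabic*)}]
\item Make $\BE_\CS$ a biadditive functor $H^0(\A)^{\op}\times H^0(\A)\to\mathsf{Ab}$.
\begin{itemize}
\item Contravariant action in $C$: homotopy pullback along $u\colon C'\to C$, using \ref{EX2}.
\item Covariant action in $A$: homotopy pushout, using \ref{EX2op}.
\end{itemize}
The first thing to check is that these operations depend only on the homotopy class of $u$. This follows because homotopy pullbacks are defined up to isomorphism in $\cpx^3(\A)$.
\item Define the group structure by Baer sum. The sum of $\xi,\xi'$ is obtained from the direct sum conflation (closed under sums by a standard argument from \ref{EX0}--\ref{EX2op}), pulled back along the diagonal and pushed out along the codiagonal.
\item Define the realization $\fs_\CS(\xi)$ as the class of $A\xto{\bar f}B\xto{\bar g}C$ in $H^0(\A)$.
\end{enumerate}

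The key tool is a useful reformulation. Because $\xi$ is short exact, the morphism $\varphi_\xi$ identifies $A$ with the homotopy kernel $V_\xi$ after applying $\Hom(X,\Sigma^{\le0}-)$, and dually for $\psi_\xi$. So inside $\tr(\A)$ one gets a triangle $A\to B\to C\to \Sigma A$, and a conflation is determined up to isomorphism by the extension class $\delta_\xi\in\Hom_{\tr(\A)}(C,\Sigma A)$. I would therefore embed $\BE_\CS(C,A)$ as a subset of $\Hom_{\tr(\A)}(C,\Sigma A)$. Under this embedding pullback and pushout become composition, which makes biadditivity and the Baer sum essentially formal: the Baer sum corresponds to the sum of morphisms, provided $\CS$ is closed under it.

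Next I would verify the extriangulated axioms (ET1)--(ET4)$^{\op}$ of \cite{NP19}.
\begin{itemize}
\item (ET2), that $\fs_\CS$ is an additive realization, follows from the triangle picture together with \cref{lem:isomorphisms_in_cpx3}.
\item (ET3) and (ET3)$^{\op}$, which ask for morphisms of extriangles, come from the fact that a morphism of triangles in $\tr(\A)$ whose end terms lie in $\A$ lifts to a morphism in $\cpx^3(\A)$. This lifting holds by connectivity-free Hom computations in $\CD(\A)$.
\end{itemize}

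I expect the main obstacle to be (ET4), the octahedral axiom. Given composable inflations $f$ and $f'$, \ref{EX1} (in its dual form for inflations) shows that $f'f$ is an inflation. I would then build the octahedron in $\tr(\A)$. The real work is showing that the third cone $\cone(f')$ modulo the comparison map yields a conflation in $\CS$. I would try to realize it as a homotopy pushout of the conflation of $f'$ along the deflation of $f$, invoking \ref{EX2op}, and then check the required compatibilities of extension classes by the triangle identities in $\tr(\A)$. (ET4)$^{\op}$ is dual.
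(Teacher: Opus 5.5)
You have a genuine gap at the step you call the key tool, and every verification after it depends on that step. Short exactness only says that $\varphi_\xi$ and $\psi_\xi$ induce isomorphisms after applying $\Hom_{\CD(\A)}(X,\Sigma^i-)$, resp.\ $\Hom_{\CD(\A)}(-,\Sigma^iX)$, for $X\in\A$ and $i\le 0$. This does \emph{not} make $\psi_\xi\colon\cone(f)\to C$ an isomorphism in $\tr(\A)$. Its cone is the defect $M_\xi$, which vanishes only when $\xi$ splits, and this paper's whole classification is about these nonzero objects of $\CC_2(\A)$. Your picture is only correct when $\A$ is pretriangulated and all $M_\xi$ vanish.

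For an ordinary exact category (\cref{ex:exact_category_as_exact_dg}) one has $\tr(\A)=K^b(\A)$, and $M_\xi\cong\widetilde\delta\neq 0$ for every non-split conflation by \cref{lem:totalization_recovers_defect}. Worse, since $\A\to\pretr(\A)$ is fully faithful, $\Hom_{\tr(\A)}(C,\Sigma A)=H^1\A(C,A)$, and this is zero whenever $\A$ is connective. So in the exact-category case your map $\BE_\CS(C,A)\to\Hom_{\tr(\A)}(C,\Sigma A)$ lands in the zero group, although $\operatorname{Ext}^1(C,A)$ is typically nonzero; it cannot be an embedding.

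Consequently the following are all left unproved, because you reduce each of them to ``the triangle picture'':
\begin{itemize}
\item biadditivity of $\BE_\CS$;
\item the Baer-sum group structure;
\item (ET2) and (ET3), where your claimed lifting of morphisms of triangles in $\tr(\A)$ has no content;
\item the compatibilities in (ET4).
\end{itemize}

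There are two ways to repair this.

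\textbf{Direct route.} Stay inside $\cpx^3(\A)$, which is the route of the cited source. Check directly from (Ex0)--(Ex2$^{\op}$):
\begin{itemize}
\item invariance of $u^*$ and $w_*$ under homotopy, with isomorphisms that are identities on the end terms;
\item the relations $(uv)^*=v^*u^*$ and $u^*w_*=w_*u^*$;
\item associativity, commutativity, unit and inverses for the Baer sum;
\item (ET1)--(ET4)$^{\op}$.
\end{itemize}
The tools are Chen's homotopy-pullback/pushout lemmas that this paper also uses: uniqueness of homotopy kernels and cokernels, extension of a morphism of squares to a morphism into a left exact complex, and factorization of morphisms of conflations through pullbacks and pushouts. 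Your (ET4) construction, taking the homotopy pushout of the conflation of $f'$ along the deflation of $f$, is the right one, but its compatibilities must be verified there, not by triangle identities in $\tr(\A)$.

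\textbf{Quotient route.} Pass to the Verdier quotient of $\tr(\A)$ by the thick subcategory generated by $\{M_\xi\}_{\xi\in\CS}$. There $\psi_\xi$ becomes invertible and conflations really do yield triangles. But you would then have to prove that $H^0(\A)$ embeds fully faithfully and extension-closed in that quotient, with $\Hom(C,\Sigma A)\cong\BE_\CS(C,A)$. That is an embedding theorem for exact dg categories, considerably harder than the statement you are trying to prove.
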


%%%%%%%%%%%%%%%%%%%%%%%%%%%%%%%%%%%%%%%%%%%%%%%%%%%%%%%%%%%%%%%%%
\subsection{Exact dg substructures and extriangulated substructures}
\label{subsec:relative_theory}
%%%%%%%%%%%%%%%%%%%%%%%%%%%%%%%%%%%%%%%%%%%%%%%%%%%%%%%%%%%%%%%%%
Following \cite[\S 4.4]{Che24a}, we recall the correspondence between
exact dg substructures of an exact dg category $(\A,\CS)$ and
extriangulated substructures of the associated extriangulated category
$
\bigl(H^0(\A),\BE_\CS,\fs_\CS\bigr)
$.

Let $(\CC,\BE,\fs)$ be an extriangulated category.
Recall from \cite[\S 5.1]{INP24} that an additive subbifunctor $\BF\sse\BE$ is said to be \emph{closed} if it satisfies any of the following equivalent conditions:
\begin{itemize}
\item 
$(\CC,\BF,\fs|_{\BF})$ is an extriangulated category;
\item 
$\fs|_\BF$-inflations are closed under composition; and
\item 
$\fs|_\BF$-deflations are closed under composition.
\end{itemize}
In this case, we call $(\BF,\fs|_{\BF})$ an \emph{extriangulated substructure} of $(\BE,\fs)$.
This notion provides an extriangulated analogue of the relative exact structures considered in \cite{AS93,DRSSK99}.

We next recall the notion of a contravariant defect.

\begin{definition}\label{def:defect_subcategory}
\cite[Definition~2.4]{Oga21}; see also
\cite[Definition~2.8]{Eno21}.
Let $(\CC,\BE,\fs)$ be a skeletally small extriangulated category.
\begin{enumerate}
\item 
Let $A\overset{f}{\lra}B\overset{g}{\lra} C\overset{\delta}{\dra}$ be an $\fs$-triangle and define $\wtil{\delta}$ to be the cokernel of $\CC(-,g) : \CC(-,B)\to\CC(-,C)$ in $\Mod\CC$.
We call $\wtil{\delta}$ the \emph{(contravariant) defect} of the $\fs$-triangle.
\item
We denote by $\defect\BE$ the full subcategory of $\Mod\CC$ consisting of $\CC$-modules which are isomorphic to defects of some $\fs$-triangle.
We call $\defect\BE$ the \emph{defect subcategory} of $\Mod\CC$ with respect to the extriangulated structure $(\BE,\fs)$.
\end{enumerate}
\end{definition}

For a skeletally small additive category $\CC$, a right $\CC$-module $M$ is said to be \emph{coherent} if $M$ is finitely presented and every finitely generated submodule of $M$ is finitely presented.
We denote by $\coh\CC$ the full subcategory of $\Mod\CC$ consisting of coherent right $\CC$-modules.
By \cite[Proposition~2.7]{Eno21}, $\coh\CC$ is a wide subcategory of $\Mod\CC$; in particular, it is an exact abelian subcategory.

\begin{proposition}\label{prop:defect_subcat_is_Serre}\cite[Proposition~2.9]{Eno21}
Let $(\CC,\BE,\fs)$ be a skeletally small extriangulated category.
Then $\defect\BE$ is a Serre subcategory of $\coh\CC$.
\end{proposition}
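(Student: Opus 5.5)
We have to show that $\defect\BE$ is a Serre subcategory of $\coh\CC$. The proof has three parts: every defect is coherent; $\defect\BE$ is closed under subobjects and quotients; and $\defect\BE$ is closed under extensions.

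\textbf{Coherence.} For an $\fs$-triangle $A\xto{f}B\xto{g}C\overset{\delta}{\dra}$ we use the exact sequence of functors
\[
\CC(-,A)\to\CC(-,B)\to\CC(-,C)\to\BE(-,A)\to\BE(-,B).
\]
It identifies $\wtil\delta$ with the image of $\delta_\sharp\colon\CC(-,C)\to\BE(-,A)$. Hence $\wtil\delta$ is finitely presented, with presentation $\CC(-,B)\to\CC(-,C)\to\wtil\delta\to0$. To get coherence we need finitely generated submodules to be finitely presented. It suffices to show that $\Im\CC(-,g)$ is finitely presented; this is the standard fact that weak kernels exist along $\fs$-deflations. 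Indeed $\CC(-,A)\to\CC(-,B)\to\Im\CC(-,g)\to 0$ is exact, so $\Im\CC(-,g)$ is finitely presented. A finitely generated submodule of $\wtil\delta$ is the image of some $\CC(-,X)\to\wtil\delta$. Lifting to $x\colon X\to C$, its kernel is the image of $\CC(-,X')$, where $X'\to X$ is the homotopy pullback of $g$ along $x$ (which exists by (ET4)-type axioms). This makes the submodule finitely presented. Alternatively, one can directly invoke \cite[Proposition~2.7]{Eno21} once $\wtil\delta$ is shown to be a cokernel of a map between modules admitting weak-kernel resolutions.

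\textbf{Subobjects and quotients.} Let $M=\wtil\delta$ and let $N\subseteq M$ be a submodule in $\coh\CC$. Then $N$ is finitely generated, so $N$ is the image of $\CC(-,X)\to M$. Lift this map to $x\colon X\to C$. We have $\CC(-,B\oplus X)\to\CC(-,C)$ via $[g\ x]$, and $[g\ x]$ is again an $\fs$-deflation: it is the composite of the split deflation $B\oplus X\to C\oplus X$ with $C\oplus X\to C$, and deflations are closed under composition. One checks that the defect of this triangle is $M/N$, which gives closure under quotients.

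For subobjects, take the homotopy pullback of $\delta$ along $x$, namely $x^*\delta$. Its defect is $\Im(\CC(-,X)\to\BE(-,A))$, which equals the image of $\CC(-,X)$ in $M$, that is, $N$.

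\textbf{Extensions (the main obstacle).} Let $0\to M_1\to M\to M_2\to0$ be exact in $\coh\CC$ with $M_i=\wtil{\delta_i}$. The plan is to realize $M$ by splicing triangles. Choose a presentation $M_2=\Cok\CC(-,g_2)$, where $g_2\colon B_2\to C_2$ is a deflation. Lift $\CC(-,C_2)\to M_2$ to a map $\CC(-,C_2)\to M$ using projectivity of representables. Adding the generator of $M_1$, we obtain an epimorphism $\CC(-,C_1\oplus C_2)\to M$.

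The kernel of this epimorphism should be the image of $\CC(-,-)$ of a deflation $B\to C_1\oplus C_2$. We would construct this deflation as a composite of deflations: first the deflation $g_1\oplus 1$, then a deflation obtained from a pullback that accounts for the relation induced from $\CC(-,B_2)$. This step uses (ET4)$^{\op}$ and closure of deflations under composition. Checking that the resulting cokernel is exactly $M$, via a diagram chase with the long exact sequences above, is where the technical effort concentrates. Granting it, $M\in\defect\BE$.
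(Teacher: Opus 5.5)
The paper gives no proof of this statement; it is quoted from \cite[Proposition~2.9]{Eno21}, so your argument has to stand on its own. The coherence step and the closure under subobjects and quotients are correct, and they are the standard arguments. For $x\colon X\to C$, the kernel of $\CC(-,X)\to\wtil\delta$ is $\ker(x^*\delta)_\sharp=\Im\CC(-,X'\to X)$. The submodule generated by $x$ is $\wtil{x^*\delta}$, and the quotient by it is $\Cok\CC(-,[g\ x])$. Three local repairs are needed:
\begin{itemize}
\item The sentence ``it suffices to show that $\Im\CC(-,g)$ is finitely presented'' is not a valid reduction. It only gives a two-step projective presentation, not coherence. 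The argument you give afterwards via $x^*\delta$ is the correct one.
\item The triangle realizing $x^*\delta$ comes from (ET2), not from (ET4).
\item You should justify that $[1\ x]\colon C\oplus X\to C$ is a deflation. It is, because $[1\ x]=[1\ 0]\circ\begin{bsmallmatrix}1&x\\0&1\end{bsmallmatrix}$ is the split projection precomposed with an automorphism. The same trick is what closes the real gap below.
\end{itemize}

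The genuine gap is closure under extensions, which you leave as ``granted''. It is the one part of the Serre property that needs an idea, and ``a deflation obtained from a pullback'' does not identify one. Write $p_i\colon\CC(-,C_i)\to M_i$ for the canonical epimorphisms, $q_2\in M(C_2)$ for your lift of $p_2$, and $K$ for the kernel of $[p_1\ q_2]\colon\CC(-,C_1\oplus C_2)\to M$. Three steps are missing.

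(i) The element $q_2g_2$ maps to $p_2g_2=0$ in $M_2(B_2)$. Hence $q_2g_2=p_1v$ in $M(B_2)$ for some $v\colon B_2\to C_1$.

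(ii) Apply the snake lemma to the map from $0\to\CC(-,C_1)\to\CC(-,C_1\oplus C_2)\to\CC(-,C_2)\to0$ onto $0\to M_1\to M\to M_2\to0$. This gives an exact sequence $0\to\Im\CC(-,g_1)\to K\to\Im\CC(-,g_2)\to0$. Since $\begin{bsmallmatrix}-v\\g_2\end{bsmallmatrix}\in K(B_2)$ lifts $g_2$, you get $K=\Im\CC(-,G)$ for $G=\begin{bsmallmatrix}g_1&-v\\0&g_2\end{bsmallmatrix}\colon B_1\oplus B_2\to C_1\oplus C_2$.

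(iii) $G$ is a deflation, because
\[
G=\begin{bsmallmatrix}1&0\\0&g_2\end{bsmallmatrix}\circ\begin{bsmallmatrix}1&-v\\0&1\end{bsmallmatrix}\circ\begin{bsmallmatrix}g_1&0\\0&1\end{bsmallmatrix}
\]
is a composite of three deflations: $g_1\oplus 1_{B_2}$, an automorphism of $C_1\oplus B_2$, and $1_{C_1}\oplus g_2$. So closure of deflations under composition, (ET4)$^{\op}$, applies, and no pullback is involved.

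Hence $M\cong\Cok\CC(-,G)$ is the defect of any $\fs$-triangle with deflation $G$. With (i)--(iii) added your proposal becomes a complete proof; without them the extension step is only a plan.
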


The class of closed additive subbifunctors of $\BE$ forms a poset under inclusion. Moreover, arbitrary intersections of closed subbifunctors are again closed by \cite[Corollary~3.14]{HLN21}.
Similarly, the exact dg substructures of $(\A,\CS)$ form a poset under inclusion.
The following correspondence theorem is due to Chen and builds on Enomoto's classification of closed subbifunctors; see \cite[Theorem~B]{Eno21}.

\begin{proposition}\label{prop:bijection_between_substructures}
\cite[Theorem~4.37]{Che24a}
Let $(\A,\CS)$ be an exact dg category.
Then the following three posets are isomorphic:
\begin{enumerate}
\item
the poset of exact dg substructures of $(\A,\CS)$;
\item
the poset of extriangulated substructures of
$\bigl(H^0(\A),\BE_\CS,\fs_\CS\bigr)$;
\item
the poset of Serre subcategories of $\defect\BE_\CS$.
\end{enumerate}
\end{proposition}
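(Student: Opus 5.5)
The plan is to establish the two isomorphisms (1)$\cong$(2) and (2)$\cong$(3) separately and then compose them. Order-preservation will be clear at each stage, since every assignment below is defined by a membership condition on conflations or on extension classes.

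For (2)$\cong$(3), I would invoke Enomoto's classification \cite[Theorem~B]{Eno21} for the skeletally small extriangulated category $\bigl(H^0(\A),\BE_\CS,\fs_\CS\bigr)$ supplied by \cref{thm:exact_dg_to_extri}.
\begin{enumerate}[label=\textup{(\roman*)}]
\item A closed subbifunctor $\BF\sse\BE_\CS$ is sent to $\defect\BF$. This is a Serre subcategory of $\coh H^0(\A)$ by \cref{prop:defect_subcat_is_Serre}, applied to $(\BF,\fs|_\BF)$, and it is contained in $\defect\BE_\CS$. Since $\defect\BE_\CS$ is itself Serre in $\coh H^0(\A)$, $\defect\BF$ is Serre in $\defect\BE_\CS$.
\item Conversely, a Serre subcategory $\CD\sse\defect\BE_\CS$ is sent to $\BF_\CD(C,A)=\{\delta\in\BE_\CS(C,A)\mid\wtil\delta\in\CD\}$.
\end{enumerate}
For (ii) one has to check three things. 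First, $\BF_\CD$ is an additive subbifunctor; this uses that defects of pushouts and pullbacks of $\delta$ are quotients or subobjects of $\wtil\delta$, and that the defect of a Baer sum is an extension of the summands' defects, so closure under subquotients and extensions applies. Second, $\BF_\CD$ is closed; for this I would show that $\fs|_{\BF_\CD}$-deflations compose, using that the defect of a composite deflation sits in an exact sequence between the two original defects. Third, the two assignments are mutually inverse, which follows because a defect determines membership and every object of $\CD$ is a defect.

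For (1)$\cong$(2), I would follow Chen's construction \cite[\S4.4]{Che24a}.
\begin{enumerate}[label=\textup{(\roman*)}]
\item An exact dg substructure $\CS'\sse\CS$ gives, again by \cref{thm:exact_dg_to_extri}, an extriangulated structure $(\BE_{\CS'},\fs_{\CS'})$. Here $\BE_{\CS'}(C,A)\sse\BE_\CS(C,A)$ consists of the classes of $\CS'$-conflations, and it is closed because $\CS'$-deflations compose by \ref{EX1}.
\item Conversely, a closed $\BF$ is sent to $\CS_\BF$, the class of $\CS$-conflations whose associated extension lies in $\BF$.
\end{enumerate}
One then verifies that $\CS_\BF$ satisfies \ref{EX0}--\ref{EX2op}:
\begin{itemize}
\item \ref{EX0} holds because the split conflation has zero class.
\item \ref{EX1} holds by closedness of $\BF$.
\item \ref{EX2} and \ref{EX2op} hold because homotopy pullbacks and pushouts in $\A$ realize $u^*\delta$ and $v_*\delta$ in $H^0(\A)$, and $\BF$ is a subbifunctor.
\end{itemize}

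The main obstacle is the bijectivity in (1)$\cong$(2). One must show that a conflation in $\CS$ is determined up to isomorphism in $\cpx^3(\A)$ by its extension class, so that $\CS_{\BE_{\CS'}}=\CS'$. This is where \cref{lem:isomorphisms_in_cpx3} enters: a morphism of $3$-term complexes inducing isomorphisms on the terms in $H^0\A$ is an isomorphism, so equivalent realizations in $H^0(\A)$ lift to isomorphic conflations. Lifting the equivalence requires Chen's homotopy-uniqueness of homotopy pullbacks and cokernels of inflations, and I would rely on it in the form established in \cite[\S4]{Che24a}.
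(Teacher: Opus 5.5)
Your plan is correct and follows the same route as the paper. The paper gives no proof of its own: it cites Chen's Theorem~4.37, which matches exact dg substructures of $(\A,\CS)$ with closed subbifunctors of $\BE_\CS$ and then applies Enomoto's Theorem~B, exactly the two-step decomposition you propose.

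One small correction to your sketch of Enomoto's argument. The defect of a Baer sum is not in general an extension of the two defects; for instance $\delta+(-\delta)=0$. Instead, $\delta+\delta'=\Delta^*\nabla_*(\delta\oplus\delta')$ and $\wtil{\delta\oplus\delta'}\cong\wtil\delta\oplus\wtil{\delta'}$, so the defect of $\delta+\delta'$ is a subquotient of $\wtil\delta\oplus\wtil{\delta'}$, which still gives the closure you need.
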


%%%%%%%%%%%%%%%%%%%%%%%%%%%%%%%%%%%%%%%%%%%%%%%%%%%%%%%%%%%%%%%%%
\subsection{Totalizations and defects}
\label{subsec:totalizations_and_defects}
%%%%%%%%%%%%%%%%%%%%%%%%%%%%%%%%%%%%%%%%%%%%%%%%%%%%%%%%%%%%%%%%%
We now return to the dg setting and introduce the totalization of a $3$-term complex. For a short exact $3$-term complex, its totalization provides a dg lift of the contravariant defect introduced in the preceding subsection.

Throughout this subsection, let $\A$ be a connective dg category.
Let $\xi=(A\xto{f}B\xto{g}C,h)$ be a $3$-term complex in $\A$.
Recall from Remark~\ref{rem:defining_diagram_of_totalization} that $\xi$ determines an object
\[
M_\xi=\cone(\psi_\xi)\in\pretr(\A)
\]
appearing in the diagram \eqref{diag:defect}.
We call $M_\xi$ the \emph{totalization} of $\xi$ and write $M_\xi=\Tot(\xi)$.
As explained below, if $\xi$ is a short exact sequence, $M_\xi$ recovers the
contravariant defect of $\xi$ in the sense of Auslander. 

\begin{lemma}\label{lem:totalization_recovers_defect}
Let $(\A,\CS)$ be an exact category, regarded as an exact dg category
concentrated in degree zero.
For a conflation
$
\xi=(A\xto{f}B\xto{g}C)
$
in $\CS$, let $\delta$ denote the corresponding extension.
Then there is a natural isomorphism
\[
M_\xi\cong\widetilde{\delta}
\]
in $\CD(\A)$, where the contravariant defect
$\widetilde{\delta}$ is regarded as a dg $\A$-module concentrated in
degree zero.
\end{lemma}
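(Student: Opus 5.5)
In the degree-zero situation all the ingredients of \eqref{diag:defect} are ordinary, so the plan is to compute $M_\xi$ directly as an explicit complex of representable modules and read off its cohomology. Since $\A$ is concentrated in degree zero, $h=0$, and $d(h)=-gf$ forces $gf=0$. Hence $\xi$ is an honest complex $A\xto{f}B\xto{g}C$ of representables in $\CC_{\dg}(\A)$.

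\textbf{Step 1: write out the totalization.} First $U_\xi=\cone(f)$ is the two-term complex $A\to B$ with $A$ in degree $-1$ and $B$ in degree $0$. The map $\psi_\xi=[-h\ \ g]=[0\ \ g]$ is a chain map $U_\xi\to C$, and it uses $gf=0$. Its cone $M_\xi=\cone(\psi_\xi)$ is therefore, up to the usual signs, the three-term complex
\[
\A(-,A)\xto{\pm f_*}\A(-,B)\xto{\pm g_*}\A(-,C)
\]
concentrated in degrees $-2,-1,0$. Up to signs this is just the complex $\xi$ shifted by $\Sigma^2$ placed on the $C$-end, and sign changes give an isomorphic complex.

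\textbf{Step 2: compute the cohomology.} Because $\xi$ is a conflation, $f$ is a kernel of $g$ and $g$ a cokernel of $f$ in $\A$. Left exactness of the representable functor gives exactness of
\[
0\to\A(-,A)\to\A(-,B)\to\A(-,C).
\]
Thus $H^{-2}M_\xi=0$ and $H^{-1}M_\xi=0$. Also $H^0M_\xi=\Cok\A(-,g)$, which is $\wtil{\delta}$ by \cref{def:defect_subcategory}; the extriangle attached to $\xi$ has deflation $g$ by \cref{thm:exact_dg_to_extri}. A complex with cohomology only in degree $0$ is quasi-isomorphic to its $H^0$ through the truncation zigzag $M_\xi\leftarrow\tau_{\le0}M_\xi\to H^0M_\xi$. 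Here $\tau_{\le 0}M_\xi=M_\xi$, so there is a direct quasi-isomorphism $M_\xi\to\wtil{\delta}$, namely the projection $\A(-,C)\to\wtil\delta$ in degree $0$. This gives the isomorphism $M_\xi\cong\wtil{\delta}$ in $\CD(\A)$.

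\textbf{Step 3: naturality.} For a morphism of conflations $(r_0,r_1,r_2)$ in $\cpx^3(\A)$, the homotopies $s_1,s_2,t$ vanish for degree reasons, so the morphism is a plain chain map. It induces a map of cones, and the projection onto $\Cok\A(-,g)$ commutes with $r_2$. This gives naturality.

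The main obstacle is bookkeeping rather than substance. One must confirm that the cone conventions in \eqref{diag:defect} really place $C$ in degree $0$, not in some other degree, so that the defect appears as $H^0$ and not as a shifted copy. One must also check that the signs do not obstruct identifying the cokernel. I would settle both by writing $\cone$ explicitly with the convention $\cone(u)=\Sigma X\oplus Y$ and tracking degrees.
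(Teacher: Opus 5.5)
Your proposal is correct and follows the paper's argument: the paper likewise observes that $M_\xi$ is the totalization of the representable complex $\A(-,A)\to\A(-,B)\to\A(-,C)$, which is a projective resolution of $\widetilde{\delta}$ because $f$ is a kernel of $g$, so the augmentation onto $\Cok\A(-,g)$ gives the isomorphism in $\CD(\A)$. You simply spell out what the paper leaves implicit: the degree placement ($A,B,C$ in degrees $-2,-1,0$), the vanishing of $h$ and of the homotopy components $s_1,s_2,t$, and the resulting naturality.
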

\begin{proof}
The conflation $\xi$ induces an exact sequence
\[
0\longrightarrow\A(-,A)
\longrightarrow\A(-,B)
\longrightarrow\A(-,C)
\longrightarrow\widetilde{\delta}
\longrightarrow0
\]
in $\Mod\A$.
By construction, $M_\xi$ is the total complex of the first three
representable modules. Hence the above projective resolution induces
the asserted isomorphism in $\CD(\A)$.
\end{proof}

This motivates the following terminology.

\begin{definition}\label{def:defect}
If $\xi$ is a short exact sequence, we call $M_\xi$ the
\emph{contravariant defect}, or simply the \emph{defect} of $\xi$.
For any class $\CS$ of short exact sequences in $\A$, we denote by $\CM_\CS$ the full subcategory of $\tr(\A)$ consisting of all objects isomorphic to $M_\xi$ for some $\xi\in\CS$.
\end{definition}

\begin{remark}
Chen calls an object in the heart $\Mod H^0(\A)$ of $\CD(\A)$ obtained as the totalization of a conflation a \emph{defective object};
see \cite[Def.~3.8]{Che24b}.
We use the terminology \emph{defect} instead, in accordance with Auslander's
terminology for exact and extriangulated categories; see \cite[Chapter~III, \S4]{Aus78} and
\cite[\S IV.4]{ARS}.
\end{remark}

The following lemma shows that the exactness of $\xi$ can be detected entirely from the position of $M_\xi$ inside $\tr(\A)$.
More precisely, left and right exactness correspond to vanishing conditions against representable dg modules on the two sides.

\begin{lemma}\label{lem:defect_vanishing}
Let $\A$ be a connective dg category, let $\xi$ be a $3$-term
complex in $\A$, and let $M_\xi\in\tr(\A)$ be its totalization.
Then the following statements hold.
\begin{enumerate}
\item
The $3$-term complex $\xi$ is left exact if and only if
\[
\Hom_{\CD(\A)}(X,\Sigma^iM_\xi)=0
\]
for every $X\in\A$ and every $i\neq0$.

\item
The $3$-term complex $\xi$ is right exact if and only if
\[
\Hom_{\CD(\A)}(M_\xi,\Sigma^iX)=0
\]
for every $X\in\A$ and every $i\neq2$.
\end{enumerate}
Consequently, $\xi$ is a short exact sequence if and only if
\begin{equation}\label{diag:defect_vanishing}
\begin{cases}
\Hom_{\CD(\A)}(X,\Sigma^iM_\xi)=0
    & (i\neq0),\\
\Hom_{\CD(\A)}(M_\xi,\Sigma^iX)=0
    & (i\neq2)
\end{cases}
\end{equation}
for every $X\in\A$.
\end{lemma}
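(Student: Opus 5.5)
We prove (1) using the vertical triangle of \eqref{diag:defect} through $A$, $V_\xi$ and $M_\xi$. Part (2) is the dual argument using the triangle through $U_\xi$ and $C$. The last assertion then follows by combining (1) and (2) with Definition~\ref{def:short_exact_sequence}.

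\emph{Step 1: the triangle for (1).} From the octahedral configuration in \eqref{diag:defect} we extract a distinguished triangle in $\CD(\A)$
\[
A\xrightarrow{\varphi_\xi}V_\xi\lra \Sigma^{-1}M_\xi\lra\Sigma A .
\]
Indeed, the rightmost columns show that $M_\xi$ fits in triangles with $\Sigma U_\xi\to\Sigma^2A$ and with $C\to\Sigma V_\xi$. The octahedral axiom applied to the composite $A\to V_\xi$ (through $B$) identifies $\cone(\varphi_\xi)\cong\Sigma^{-1}M_\xi$. This matches the standard computation $\Tot(\xi)=\cone(\cone(f)\to C)$, and we will check the identification directly on the explicit matrices.

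\emph{Step 2: the long exact sequence for (1).} Fix $X\in\A$ and apply $\Hom_{\CD(\A)}(X,-)$ to the triangle. We obtain the exact sequence
\[
\Hom(X,\Sigma^iA)\to\Hom(X,\Sigma^iV_\xi)\to\Hom(X,\Sigma^{i-1}M_\xi)\to\Hom(X,\Sigma^{i+1}A)\to\Hom(X,\Sigma^{i+1}V_\xi).
\]
Connectivity gives $\Hom(X,\Sigma^jA)=H^j\A(X,A)=0$ for $j>0$. The object $V_\xi=\Sigma^{-1}\cone(g)$ is an extension of $B$ and $\Sigma^{-1}C$, so $\Hom(X,\Sigma^jV_\xi)=0$ for $j\geq 2$. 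Similarly, $M_\xi$ is built from $C$, $\Sigma B$ and $\Sigma^2A$, so $\Hom(X,\Sigma^jM_\xi)=0$ for $j\ge1$ automatically. For $j\le -1$, the sequence shows that $\Hom(X,\Sigma^{j}M_\xi)=0$ for all $j\leq -1$ if and only if $\varphi_\xi$ induces a bijection in every degree $i\leq 0$. The one boundary term needing care is injectivity at $i=0$ together with surjectivity at $i+1=1$. Surjectivity at $i+1=1$ is automatic, because $\Hom(X,\Sigma V_\xi)=H^0\A(X,C)/\ldots$ receives no obstruction, so we must check that $\Hom(X,\Sigma V_\xi)\to\Hom(X,M_\xi)$ causes no trouble. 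This forces the exact indexing: the shift must be aligned so that the condition ``$i\le 0$ isomorphisms'' corresponds exactly to the vanishing of negative-degree and positive-degree maps into $M_\xi$. The main point needing care is therefore bookkeeping of the shift in Step 1 and the top-degree term. If the naive shift is off by one, we redo Step 1 using the triangle $V_\xi\to C\to\dots$, and determine the correct placement of $M_\xi$ from the degree-zero case of Lemma~\ref{lem:totalization_recovers_defect}, where $M_\xi$ is the defect sitting in degree $0$.

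\emph{Step 3: the dual statement (2).} Apply $\Hom(-,\Sigma^iX)$ to the triangle $U_\xi\xrightarrow{\psi_\xi}C\to M_\xi\to\Sigma U_\xi$. Connectivity now kills $\Hom(A,\Sigma^jX)$, $\Hom(B,\Sigma^jX)$ and $\Hom(C,\Sigma^jX)$ for $j>0$. This gives automatic vanishing of $\Hom(M_\xi,\Sigma^jX)$ for $j\geq3$. For $j\leq 1$, vanishing is equivalent to $\psi_\xi^*$ being bijective for $i\le0$. Alternatively, we can deduce (2) from (1) applied in $\A^{\op}$ via the shifted duality $\Sigma^2(-)^\vee$, which is compatible with totalization as announced in the introduction. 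This compatibility explains why the exceptional degree is $2$ rather than $0$.
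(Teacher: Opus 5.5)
Your proposal is correct and is essentially the paper's proof: the same triangles $A\xrightarrow{\varphi_\xi}V_\xi\to\Sigma^{-1}M_\xi\to\Sigma A$ and $U_\xi\xrightarrow{\psi_\xi}C\to M_\xi\to\Sigma U_\xi$, their long exact $\Hom$-sequences, and connectivity. The only cosmetic difference is that you read off the automatic vanishing for $j\geq1$ (resp.\ $j\geq3$) from the filtration of $M_\xi$ by $C,\Sigma B,\Sigma^2A$, rather than from $V_\xi$ and $U_\xi$ as the paper does.

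Your boundary discussion in Step~2 is garbled, though harmlessly so. The shift in Step~1 is already correct, and the case $j=-1$ needs two things: surjectivity of $\varphi_\xi\circ-$ at $i=0$, which comes from left exactness, and injectivity at $i=1$, which is automatic since $\Hom_{\CD(\A)}(X,\Sigma A)=0$. Surjectivity at $i=1$ is neither automatic nor needed; indeed $\Hom_{\CD(\A)}(X,\Sigma V_\xi)\cong\Hom_{\CD(\A)}(X,M_\xi)$ is usually nonzero.
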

\begin{proof}
According to the diagram \eqref{diag:defect}, we put $U_\xi=\cone(f)$ and
$V_\xi=\Sigma^{-1}\cone(g)$.
We also denote by
\[
\varphi_\xi=
\begin{bmatrix}
-f\\
-h
\end{bmatrix}
\colon A\longrightarrow V_\xi,
\qquad
\psi_\xi=
\begin{bmatrix}
-h\amph g
\end{bmatrix}
\colon U_\xi\longrightarrow C
\]
the morphisms induced by $\xi$.
The third and rightmost columns of
\eqref{diag:defect} yield triangles in $\CD(\A)$:
\begin{align}
&U_\xi
\xrightarrow{\psi_\xi}C\longrightarrow
M_\xi\longrightarrow\Sigma U_\xi
\qquad\text{and}
\label{tri:totalization_right}
\\
&A
\xrightarrow{\varphi_\xi}V_\xi\longrightarrow
\Sigma^{-1}M_\xi\longrightarrow\Sigma A.
\label{tri:totalization_left}
\end{align}

(1)
By definition, $\xi$ is left exact if and only if $\varphi_\xi$ induces
isomorphisms
\begin{equation}\label{isom:from_left_exact}
\Hom_{\CD(\A)}(X,\Sigma^iA)
\xrightarrow{\sim}
\Hom_{\CD(\A)}(X,\Sigma^iV_\xi)
\end{equation}
for every $X\in\A$ and every $i\leq0$.
Applying $\Hom_{\CD(\A)}(X,-)$ to
\eqref{tri:totalization_left}, we have the following long exact sequence:
\begin{equation}\label{seq:long_exact_left}
\Hom_{\CD(\A)}(X,\Sigma^iA)
\overset{\varphi_\xi\circ -}{\lra}
\Hom_{\CD(\A)}(X,\Sigma^iV_\xi)
\lra
\Hom_{\CD(\A)}(X,\Sigma^{i-1}M_\xi)
\lra
\Hom_{\CD(\A)}(X,\Sigma^{i+1} A).
\end{equation}
The isomorphisms \eqref{isom:from_left_exact}, together with connectivity, imply
\[
\Hom_{\CD(\A)}(X,\Sigma^iM_\xi)=0
\qquad(i<0).
\]
Moreover, the second row of \eqref{diag:defect} gives $\Hom_{\CD(\A)}(X,\Sigma^{i+1}V_\xi)=0$ for every $i>0$.
The long exact sequence \eqref{seq:long_exact_left} therefore yields $\Hom_{\CD(\A)}(X,\Sigma^iM_\xi)=0$ for every $i>0$.

Conversely, we suppose that
$
\Hom_{\CD(\A)}(X,\Sigma^iM_\xi)=0
$
for any $i\neq 0$.
Then the leftmost morphisms in \eqref{seq:long_exact_left} are isomorphisms for $i\leq 0$, which shows $\xi$ is left exact.

(2)
Dually, $\xi$ is right exact if and only if $\psi_\xi$ induces
isomorphisms
\begin{equation}\label{isom:from_right_exact}
\Hom_{\CD(\A)}(C,\Sigma^iX)
\xrightarrow{\sim}
\Hom_{\CD(\A)}(U_\xi,\Sigma^iX)
\end{equation}
for every $X\in\A$ and every $i\leq0$. Applying $\Hom_{\CD(\A)}(-,X)$ to
\eqref{tri:totalization_right}, we obtain the following long exact sequence:
\[
\Hom_{\CD(\A)}(M_\xi,\Sigma^iX)
\longrightarrow
\Hom_{\CD(\A)}(C,\Sigma^iX)
\longrightarrow
\Hom_{\CD(\A)}(U_\xi,\Sigma^iX)
\longrightarrow
\Hom_{\CD(\A)}(M_\xi,\Sigma^{i+1}X).
\]
If $
\Hom_{\CD(\A)}(M_\xi,\Sigma^iX)=0$ for any $i\neq2$,
then we have \eqref{isom:from_right_exact}.
Hence $\xi$ is right exact.

Conversely, if $\xi$ is right exact, then the isomorphisms \eqref{isom:from_right_exact} first give
$
\Hom_{\CD(\A)}(M_\xi,\Sigma^iX)=0
$ for any $i\leq 0$.
By connectivity, we have $\Hom_{\CD(\A)}(C,\Sigma X)=0$ and $\Hom_{\CD(\A)}(M_\xi,\Sigma X)=0$.
Moreover, the first row of \eqref{diag:defect}  gives $\Hom_{\CD(\A)}(U_\xi,\Sigma^iX)=0$ for every $i>1$.
The long exact sequence associated with
\eqref{tri:totalization_right} therefore yields
\[
\Hom_{\CD(\A)}(M_\xi,\Sigma^iX)=0
\qquad(i>2).
\]
Thus only the degree $i=2$ may be nonzero.
\end{proof}

\begin{remark}
Mochizuki and Nakaoka introduce higher $n$-exact sequences by means of vanishing conditions analogous to those in \eqref{diag:defect_vanishing}; see \cite[Definition~3.1]{MN26}.
For $n=1$, their notion agrees with that of a short exact $3$-term complex used here; see \cite[Remark~3.5]{MN26}.
Thus, \cref{lem:defect_vanishing} may also be viewed as the $n=1$ case of their higher framework.
\end{remark}

These conditions motivate the definition of the category
$\CC_2(\A)$ below.

\begin{definition}\label{def:C2}
Let $\CC_2(\A)$ be the full subcategory of $\tr(\A)$ consisting of the objects $M$ satisfying the vanishing condition \eqref{diag:defect_vanishing}:
\[
\CC_2(\A)\deff\Set{M\in\tr(\A)|
\begin{aligned}
&\Hom_{\CD(\A)}(X,\Sigma^i M)=0\   (i\neq 0)\\
&\Hom_{\CD(\A)}(M,\Sigma^i X)=0\   (i\neq 2)
\end{aligned}
\text{\quad for\ any\ } X\in\A
}.
\]
\end{definition}

Let $\CS_{\all}$ be the class of all short exact sequences in $\A$.
By \cref{lem:defect_vanishing}, for every $3$-term complex
$\xi\in\cpx^3(\A)$, we have
\begin{equation}\label{diag:short_exact_C2}
\xi\in\CS_{\all}
\quad\Longleftrightarrow\quad
M_\xi\in\CC_2(\A).
\end{equation}
Thus, we have $\CM_\CS\sse\CC_2(\A)$ for any subclass $\CS\sse\CS_{\all}$.

For an object $M\in\CC_2(\A)$, we construct a $3$-term presentation of $M$, namely, a $3$-term complex $\xi$ of the form \eqref{diag:3-term_complex} such that $M\cong M_\xi$ in $\tr(\A)$.
The main tool is the bounded weight structure on $\tr(\A)$ induced by the connectivity of $\A$.

For a full subcategory $\CC$ of a triangulated category, we denote
by $\add(\CC)$ its closure under finite direct sums and retracts.
For full subcategories $\CC_1,\CC_2$, we denote by
$\CC_1*\CC_2$ the full subcategory consisting of objects $X$
which admit a triangle
\[
C_1\to X\to C_2\to\Sigma C_1
\]
with $C_i\in\CC_i$.

\begin{lemma}\label{lem:weight_structure}
The subcategory $H^0(\A)$ generates a bounded weight structure $(\tr(\A)^{\geq 0},\tr(\A)^{\leq 0})$ on $\tr(\A)$.
More explicitly, we have
\begin{align*}
\tr(\A)^{\geq 0}
&=
\add\left(
\bigcup_{i\geq0}
\Sigma^{-i}H^0(\A)*\Sigma^{-i+1}H^0(\A)*\cdots *H^0(\A)
\right)
\quad\text{and}\quad
\\
\tr(\A)^{\leq 0}
&=
\add\left(
\bigcup_{i\geq0}
H^0(\A)*\Sigma H^0(\A)*\cdots *\Sigma^iH^0(\A)
\right).
\end{align*}
The associated weight heart is $\add(H^0(\A))$. In particular, if $H^0(\A)$ is idempotent complete, then the weight heart is $H^0(\A)$.
\end{lemma}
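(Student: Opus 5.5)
Since $\tr(\A)$ is the idempotent completion of $H^0(\pretr(\A))$, and $\pretr(\A)$ is generated under shifts and cones by the representables, $\tr(\A)$ is classically generated by $H^0(\A)$, i.e.\ by the Yoneda images of objects of $\A$. The plan is to invoke Bondarko's existence theorem for weight structures generated by a negative subcategory (equivalently, the dual of the Aihara--Iyama/Keller--Nicolás construction of silting t-structures): if a full additive subcategory $\CC$ of a triangulated category $\T$ classically generates $\T$ and is \emph{negative}, meaning $\Hom_\T(X,\Sigma^iY)=0$ for all $X,Y\in\CC$ and all $i>0$, then $\CC$ generates a bounded weight structure on $\T$ whose heart is $\add(\CC)$. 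The two classes are described as extension closures of nonpositive and nonnegative shifts, followed by retract closure, which matches the displayed formulas.

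First I would verify negativity. For $X,Y\in\A$, the Yoneda lemma in $\CD(\A)$ gives
\[
\Hom_{\CD(\A)}(X,\Sigma^iY)\cong H^i\A(X,Y).
\]
This vanishes for $i>0$ exactly by connectivity (\cref{def:connective_dg_cat}). Second, I would check the generation claim: every object of $\pretr(\A)$ is an iterated cone of shifts of representables, so it lies in a finite extension $\Sigma^{a_1}H^0(\A)*\cdots*\Sigma^{a_n}H^0(\A)$. Then $\tr(\A)$ is the retract closure of this, so $H^0(\A)$ classically generates.

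Third, I would identify the explicit descriptions. The standard argument is the following. Reorder such a filtration so that the shifts are monotone. This uses negativity, since $\Hom(\Sigma^aX,\Sigma^{b+1}Y)=0$ whenever $a>b$, which lets adjacent factors in the wrong order split or be rearranged via the octahedral axiom. Then cut the object at weight $0$, which yields the weight decomposition $w_{\ge0}\to M\to w_{\le -1}$-type triangles. Orthogonality $\Hom(\tr(\A)^{\geq0},\Sigma\tr(\A)^{\le0})=0$ follows by dévissage from negativity, and closure under retracts is built into $\add$. Boundedness holds since every object lies in a finite extension. The sign conventions here must be matched to the paper's convention, where $\tr(\A)^{\geq0}$ contains the negative shifts $\Sigma^{-i}$. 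The main obstacle is this rearrangement and decomposition step, but it is exactly the content of Bondarko's theorem, so I would cite it rather than reprove it.

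Finally, for the heart I would use that the heart of the weight structure generated by a negative generating $\CC$ is $\add(\CC)$, meaning retracts in $\tr(\A)$ of finite sums of objects of $\CC$. Since $\A$ is additive, $H^0(\A)$ is closed under finite sums. If in addition $H^0(\A)$ is idempotent complete, any idempotent in $\tr(\A)$ on an object of $H^0(\A)$ splits already within $H^0(\A)$, because the Yoneda embedding is fully faithful. By uniqueness of images of split idempotents, the retract therefore lies in $H^0(\A)$, so the heart is $H^0(\A)$.
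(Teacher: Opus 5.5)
Your proposal is correct and follows the paper's proof: negativity of $H^0(\A)$ from connectivity via Yoneda, classical generation of $\tr(\A)$, and then Bondarko's Theorem~4.3.2(II); your explicit check that the heart is $H^0(\A)$ when it is idempotent complete is a harmless elaboration. One small slip in your heuristic aside: the vanishing that splits a wrongly ordered extension $\Sigma^aX\to M\to\Sigma^bY$ with $a\geq b$ is $\Hom(\Sigma^bY,\Sigma^{a+1}X)\cong\Hom(Y,\Sigma^{a+1-b}X)=0$. The group you wrote, $\Hom(\Sigma^aX,\Sigma^{b+1}Y)$, need not vanish for $a>b$; this does not affect the argument, since you cite Bondarko for that step.
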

\begin{proof}
By connectivity, $H^0(\A)$ is a negative subcategory of $\tr(\A)$; namely $\Hom_{\tr(\A)}(A,\Sigma^iB)=0$ for any $i>0$ and any $A,B\in H^0(\A)$.
Since $H^0(\A)$ generates $\tr(\A)$, the assertion follows from
\cite[Theorem~4.3.2(II)]{Bon10}.
\end{proof}

We say that $\A$ is \emph{idempotent complete} if $H^0(\A)$ is idempotent complete.
In the remainder of this subsection, we assume that $\A$ is idempotent complete.

\begin{lemma}\label{lem:weight_amplitude}
If $M\in\CC_2(\A)$, then $M$ has weights in $[-2,0]$.
More precisely, there exists a $3$-term complex $\xi$ in $\A$ of the form
\begin{equation}\label{diag:weight_amplitude}
\begin{tikzcd}[row sep=0.6cm]
A \arrow{r}{f}\arrow[bend right]{rr}[swap]{h}
  & B\arrow{r}{g}
  & C
\end{tikzcd}
\end{equation}
such that $M_\xi\cong M$ in $\tr(\A)$. In particular, $M\in
H^0(\A)*\Sigma H^0(\A)*\Sigma^2H^0(\A)$.
\end{lemma}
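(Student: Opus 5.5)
We use the bounded weight structure of \cref{lem:weight_structure}, whose heart is $H^0(\A)$ because $\A$ is idempotent complete. The argument has two halves: first, the two vanishing conditions defining $\CC_2(\A)$ force the weights of $M$ into $[-2,0]$; second, a weight decomposition of such an object is then unwound into a $3$-term complex $\xi$ with $M_\xi\cong M$.

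\emph{Step 1: weight bounds.} Since the weight structure is bounded, $M$ has weights in some finite range $[-n,m]$. We write $M\in\tr(\A)^{\leq 0}$ and $M\in\tr(\A)^{\geq -2}$ for ``weights $\leq 0$'' and ``weights $\geq -2$''. The standard orthogonality criteria for bounded weight structures are:
\begin{itemize}
\item $M\in\tr(\A)^{\leq 0}$ if and only if $\Hom(M,\Sigma^i X)=0$ for all $X\in H^0(\A)$ and all $i>0$ large relative to the shift convention;
\item $M$ has weights $\geq -2$ if and only if $\Hom(X,\Sigma^i M)=0$ for all $X$ in the heart and the appropriate range of $i$.
\end{itemize}
These are the characterisations $\tr(\A)^{\leq0}={}^{\perp}(\tr(\A)^{\geq 1})$ and its dual, which reduce to the heart because the heart generates. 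To prove the bounds, we peel off the extreme weight pieces. Suppose the top weight is $k>0$. A weight decomposition gives a triangle $\Sigma^k H\to M\to M'$, or its dual, with $H$ in the heart. The condition $\Hom(X,\Sigma^iM)=0$ for $i\neq 0$ kills the extreme piece: using negativity, a nonzero split map from the heart object into $\Sigma^{-k}M$ would survive. The condition $\Hom(M,\Sigma^iX)=0$ for $i\neq 2$ cuts the other end at $2$. The output of this step is $M\in\tr(\A)^{\leq0}\cap\tr(\A)^{\geq -2}$, up to the sign convention fixed in \cref{lem:weight_structure}.

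\emph{Step 2: producing $\xi$.} Any object with weights in $[-2,0]$ lies in $H^0(\A)*\Sigma H^0(\A)*\Sigma^2H^0(\A)$. This follows by iterating weight decompositions, each time using that the weight heart is exactly $H^0(\A)$ (idempotent completeness), so no $\add$ closure is needed. Concretely, this gives triangles $C\to M\to N\to\Sigma C$ with $N\in\Sigma H^0(\A)*\Sigma^2H^0(\A)$, and $\Sigma B'\to N\to\Sigma^2A'\to$. We then realise everything inside $\pretr(\A)$. Write $N$ as the shifted cone of a closed degree-$0$ map $f\colon A\to B$, so that $N\simeq\Sigma\,\cone(f)=\Sigma U$. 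The connecting map $\Sigma^{-1}M\to C$, equivalently $U\to C$, is a closed degree-$0$ morphism in $\pretr(\A)$ from $\cone(f)$ to $C$. Any such morphism has the form $[-h\ \ g]$ with $|g|=0$, $|h|=-1$, $d(g)=0$ and $d(h)=-gf$, which is exactly the data of a $3$-term complex $\xi=(A\xto{f}B\xto{g}C,h)$. Then $M\cong\cone(\psi_\xi)=M_\xi$.

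\emph{Main obstacle.} The delicate part is Step 1, specifically matching the index conventions: Bondarko's weight signs, the $\Sigma^i$ in \eqref{diag:defect_vanishing}, and the resulting amplitude $[-2,0]$. There is also the lifting in Step 2 from triangles in $\tr(\A)$ to strict cones in $\pretr(\A)$. The first thing I would try there is to replace $M$ up to isomorphism by a twisted complex of length three. Closed morphisms out of a cone are then computed componentwise, and any isomorphism in $H^0$ suffices because $\CM$-membership is iso-closed.
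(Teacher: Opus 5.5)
Your proposal follows the paper's proof essentially step for step. The orthogonal descriptions of the bounded weight aisles turn the vanishing conditions into $M\in\tr(\A)^{\leq0}\cap\tr(\A)^{\geq-2}$. A weight Postnikov tower then gives the triangles $C\to M\to\Sigma\cone(f)\to\Sigma C$, and the closed degree-zero map $\cone(f)\to C$ is written as $[-h\ \ g]$ with $d(h)=-gf$.

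Only Step~1 needs tidying. In the convention of \cref{lem:weight_structure}, $\tr(\A)^{\leq0}$ is cut out by $\Hom(X,\Sigma^iN)=0$ and $\tr(\A)^{\geq0}$ by $\Hom(N,\Sigma^iX)=0$, both for all $i>0$; your bullets attach these to the wrong aisles. So you only need the $i>0$ part of the first condition and the $i>2$ part of the second, and the ``peeling'' sketch can be dropped. Also, in Step~2 the connecting map should be $\Sigma^{-1}N\to C$, not $\Sigma^{-1}M\to C$.
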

\begin{proof}
By the definition of $\CC_2(\A)$, we see
\[
\Hom_{\tr(\A)}(X,\Sigma^iM)=0\ (i>0)\quad \text{and}\quad \Hom_{\tr(\A)}(M,\Sigma^iX)=0\ (i>2)
\]
for every $X\in\A$.
Recall the following orthogonal descriptions:
\begin{align*}
\tr(\A)^{\leq0}
&=
\Set{N |
\Hom_{\tr(\A)}(X,\Sigma^iN)=0
\text{ for all }X\in\A\text{ and }i>0},
\\
\tr(\A)^{\geq0}
&=
\Set{N |
\Hom_{\tr(\A)}(N,\Sigma^iX)=0
\text{ for all }X\in\A\text{ and }i>0}.
\end{align*}
It follows that $M\in\tr(\A)^{\leq0}\cap\tr(\A)^{\geq-2}$.

By \cite[Proposition~1.5.6]{Bon10}, we can choose a weight Postnikov tower of $M$ compatible with this
weight range:
We obtain triangles
\[
C\longrightarrow M\longrightarrow M_1
\xrightarrow{u}\Sigma C
\]
and
\[
\Sigma B\longrightarrow M_1\longrightarrow\Sigma^2A
\xrightarrow{-\Sigma^2\bar f}\Sigma^2B
\]
for some $A,B,C\in H^0(\A)$ and some
$\overline{f}\in H^0(\A)(A,B)$.
Choose a closed degree-zero representative $f\in Z^0\A(A,B)$
of $\overline{f}$. The second triangle identifies $M_1$ with
$\Sigma\cone(f)$. Hence we obtain
a triangle
\[
\cone(f)\xrightarrow{\overline{q}}C
\longrightarrow M\longrightarrow\Sigma\cone(f).
\]
Since $\cone(f)=\Sigma A\oplus B$ as an underlying graded
$\A$-module, we can write $q=
\begin{bsmallmatrix}
-h\amph g
\end{bsmallmatrix}
$
for some $g\in Z^0(\A)(B,C)$ and $h\in\A^{-1}(A,C)$.
With our sign convention, the condition $dq=0$ is precisely
$d(h)=-gf$.
Thus, we obtain a desired $3$-term complex $\xi$ of the form
\eqref{diag:weight_amplitude} such that $M_\xi=\cone(q)\cong M$ in $\tr(\A)$.
Finally, the two triangles above show that
$
M\in
H^0(\A)*\Sigma H^0(\A)*\Sigma^2H^0(\A)
$.
\end{proof}

Combining \cref{lem:weight_amplitude} with the vanishing characterization of short exact sequences, we obtain the key representation property of $\CC_2(\A)$:
every object of $\CC_2(\A)$ is the totalization of a short exact sequence.

\begin{corollary}\label{cor:represents_obj_of_C2}
For an object $M\in\tr(\A)$, the following conditions are equivalent:
\begin{enumerate}
\item
$M\in\CC_2(\A)$.
\item
There exists a short exact sequence $\xi$ in $\A$ such that
$
M\cong M_\xi
$
in $\tr(\A)$.
\end{enumerate}
\end{corollary}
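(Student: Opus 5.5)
The equivalence follows by combining \cref{lem:weight_amplitude} with the vanishing characterization \eqref{diag:short_exact_C2}, which itself comes from \cref{lem:defect_vanishing}. Throughout we keep the standing assumption of this part of the subsection: $\A$ is connective and idempotent complete.

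For (2)$\Rightarrow$(1), suppose $\xi$ is a short exact sequence with $M\cong M_\xi$ in $\tr(\A)$. By \cref{lem:defect_vanishing}, $M_\xi$ satisfies the vanishing conditions \eqref{diag:defect_vanishing} for every $X\in\A$. These conditions are phrased in terms of $\Hom_{\CD(\A)}$, so they are invariant under isomorphism. Since $\CC_2(\A)$ is closed under isomorphisms by convention, $M\in\CC_2(\A)$. This is just \eqref{diag:short_exact_C2}.

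For (1)$\Rightarrow$(2), take $M\in\CC_2(\A)$. \cref{lem:weight_amplitude} provides a $3$-term complex $\xi=(A\xto{f}B\xto{g}C,h)$ in $\A$ with $M_\xi\cong M$ in $\tr(\A)$. Transporting the vanishing conditions along this isomorphism shows that $M_\xi\in\CC_2(\A)$. Applying \eqref{diag:short_exact_C2} in the converse direction, that is, both parts of \cref{lem:defect_vanishing}, we conclude that $\xi$ is left exact and right exact, hence a short exact sequence.

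The only real content sits in \cref{lem:weight_amplitude}. That is where idempotent completeness is used: it ensures the weight heart is $H^0(\A)$ itself, so the Postnikov pieces $A,B,C$ are genuine objects of $\A$. It is also where the lift to an honest $3$-term complex is built, via the decomposition $\cone(f)=\Sigma A\oplus B$. The one point to check is that the totalization $M_\xi=\cone(\psi_\xi)$ from \eqref{diag:defect} agrees with the cone $\cone(q)$ produced in that lemma. This holds because $q=\begin{bsmallmatrix}-h\amph g\end{bsmallmatrix}=\psi_\xi$ exactly, so the identification is immediate and there is no real obstacle.
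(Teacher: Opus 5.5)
Your proposal is correct and follows the paper's proof essentially verbatim: (1)$\Rightarrow$(2) uses \cref{lem:weight_amplitude} to produce $\xi$ with $M_\xi\cong M$, then \cref{lem:defect_vanishing} to conclude that $\xi$ is short exact. (2)$\Rightarrow$(1) follows directly from \cref{lem:defect_vanishing} together with the isomorphism-closure of $\CC_2(\A)$. Your remark that $q=\psi_\xi$, so that $\cone(q)$ is literally $M_\xi$, is exactly the identification the paper makes at the end of \cref{lem:weight_amplitude}.
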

\begin{proof}
Suppose that $M\in\CC_2(\A)$. By
\cref{lem:weight_amplitude}, there exists a $3$-term complex $\xi$
in $\A$ such that $M\cong M_\xi$ in $\tr(\A)$. Hence $M_\xi\in\CC_2(\A)$, and \cref{lem:defect_vanishing} implies that $\xi$ is a short exact sequence.

Conversely, suppose that $M\cong M_\xi$ for some short exact
sequence $\xi$ in $\A$. By \cref{lem:defect_vanishing},
we have $M_\xi\in\CC_2(\A)$, and hence $M\in\CC_2(\A)$.
\end{proof}

We rephrase \cref{cor:represents_obj_of_C2} in terms of defect subcategories.

\begin{corollary}\label{cor:C2_as_all_defects}
We have $\CC_2(\A)=\CM_{\CS_{\all}}$.
\end{corollary}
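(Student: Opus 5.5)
The claim is $\CC_2(\A)=\CM_{\CS_{\all}}$. I will prove it by double inclusion, unwinding \cref{def:defect}. By definition, $\CM_{\CS_{\all}}$ is the full subcategory of $\tr(\A)$ consisting of all objects isomorphic to $M_\xi$ for some short exact sequence $\xi$ in $\A$.

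For the inclusion $\CM_{\CS_{\all}}\sse\CC_2(\A)$, take $M\cong M_\xi$ with $\xi\in\CS_{\all}$. The equivalence \eqref{diag:short_exact_C2}, which is \cref{lem:defect_vanishing}, gives $M_\xi\in\CC_2(\A)$. The defining vanishing conditions of $\CC_2(\A)$ in \cref{def:C2} are expressed through $\Hom$-groups in $\CD(\A)$, so they are invariant under isomorphism in $\tr(\A)$. Hence $M\in\CC_2(\A)$.

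For the reverse inclusion, take $M\in\CC_2(\A)$. By the implication (1)$\Rightarrow$(2) of \cref{cor:represents_obj_of_C2}, which uses \cref{lem:weight_amplitude} and therefore the standing idempotent completeness assumption, there is a short exact sequence $\xi$ with $M\cong M_\xi$ in $\tr(\A)$. This says exactly that $M\in\CM_{\CS_{\all}}$.

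Nothing here is a real obstacle, since all the content sits in \cref{cor:represents_obj_of_C2}. The one point to keep track of is that the statement relies on the idempotent completeness assumption of this subsection. That assumption is needed only in the second inclusion, where it ensures that the weight heart is $H^0(\A)$, so that the terms $A,B,C$ of the presentation can be taken to be objects of $\A$.
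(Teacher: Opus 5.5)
Your proposal is correct and matches the paper: the paper obtains this corollary as a direct rephrasing of \cref{cor:represents_obj_of_C2}, whose two implications are exactly your two inclusions, proved via \cref{lem:defect_vanishing} and \cref{lem:weight_amplitude}. Your remark that idempotent completeness is needed only for the inclusion $\CC_2(\A)\sse\CM_{\CS_{\all}}$ is also accurate.
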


%%%%%%%%%%%%%%%%%%%%%%%%%%%%%%%%%%%%%%%%%%%%%%%%%%%%%%%%%%%%%%%%%
\subsection{Totalization as a dg functor}\label{subsec:totalization_dg}
%%%%%%%%%%%%%%%%%%%%%%%%%%%%%%%%%%%%%%%%%%%%%%%%%%%%%%%%%%%%%%%%%
We now lift the totalization construction to the dg level.
Let $\T=\CC_{\dg}(\A)$ be the dg category of right dg
$\A$-modules.
The cone construction then defines a dg functor
\[
\cone\colon\Mor(\T)\longrightarrow\T;
\]
see \cite[Proposition~6.1.6]{CC19} and cf.~\cite[\S 2.9]{Dri04}.
For completeness, we briefly recall the \emph{dg morphism category}
$\Mor(\T)$. Its objects are closed morphisms
$$
f\colon X\longrightarrow Y
$$
of degree zero in $\T$. For two objects $f\colon X\to Y$ and
$f'\colon X'\to Y'$, the degree-$p$ component of the morphism complex
is
\[
\Mor(\T)(f,f')^p
=
\Set{
\begin{pmatrix}
\alpha&0\\
s&\beta
\end{pmatrix}
|
\alpha\in\T(X,X')^p,
\beta\in\T(Y,Y')^p,
s\in\T(X,Y')^{p-1}
}.
\]
Its differential is given by
\[
d
\begin{pmatrix}
\alpha&0\\
s&\beta
\end{pmatrix}
=
\begin{pmatrix}
-d(\alpha)&0\\
d(s)+f'\alpha-(-1)^p\beta f&d(\beta)
\end{pmatrix},
\]
and composition is given by matrix multiplication:
\[
\begin{pmatrix}
\alpha'&0\\
s'&\beta'
\end{pmatrix}
\circ
\begin{pmatrix}
\alpha&0\\
s&\beta
\end{pmatrix}
=
\begin{pmatrix}
\alpha'\alpha&0\\
s'\alpha+\beta's&\beta'\beta
\end{pmatrix}.
\]
In particular, a closed morphism of degree zero consists of closed
morphisms $\alpha$ and $\beta$ together with a homotopy $s$ satisfying
$
d(s)=\beta f-f'\alpha.
$
Thus, objects of $\Mor(\Mor(\T))$ may be regarded as
homotopy-commutative squares in $\T$:
\[
\begin{tikzcd}
X \arrow{r}{f}\arrow{d}[swap]{\alpha}
  \arrow[red]{rd}{s}
& Y\arrow{d}{\beta}\\
X'\arrow{r}[swap]{f'}
& Y'
\end{tikzcd}
\]

Following \cite[Def.~3.14]{Che24a}, let $\B$ be the dg path category of the following graded quiver with relations
\begin{equation}
0\overset{a}{\lra} 1\overset{b}{\lra} 2
\end{equation}
where $|a| = |b| = 0, d(a) = 0, d(b) = 0$ and $ba = 0$.
We put
\[
\cpx^3_{\dg}(\T)
\deff
\Fun_{A_\infty}(\B,\T),
\]
the dg category of strictly unital $A_\infty$-functors from $\B$ to
$\T$.
Its objects are identified with $3$-term complexes $\xi=(A\xto{f}B\xto{g}C,h)$ in $\T$.
At the level of objects, such a $3$-term complex may be identified
with the following object of $\Mor(\Mor(\T))$:
\begin{equation}\label{diag:homotopy_square_from_xi}
\begin{tikzcd}
A \arrow{r}{f}\arrow{d}[swap]{0}
\arrow[red]{rd}{-h}
& B\arrow{d}{g}\\
0\arrow{r}[swap]{0}
& C
\end{tikzcd}
\end{equation}
Indeed, the equality $d(-h)=gf$ is precisely the homotopy-commutativity
condition for this square.

\begin{lemma}
The above assignment extends to a fully faithful dg functor
$
\iota\colon
\cpx^3_{\dg}(\T)\longrightarrow\Mor(\Mor(\T)).
$
\end{lemma}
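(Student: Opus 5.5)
We need to construct the dg functor $\iota$ from $\cpx^3_{\dg}(\T)=\Fun_{A_\infty}(\B,\T)$ to $\Mor(\Mor(\T))$ and check that it is fully faithful. The plan is to make both morphism complexes completely explicit and then compare them term by term.

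\emph{Step 1: morphisms in $\cpx^3_{\dg}(\T)$.} Since $\B$ is the dg path category of $0\to1\to2$ with $ba=0$, its nonzero morphism spaces are spanned by $\id_0,\id_1,\id_2,a,b$. A strictly unital $A_\infty$-functor $\B\to\T$ is therefore determined by the objects $A,B,C$, the components $F_1(a)=f$, $F_1(b)=g$, and the single higher component $F_2(b,a)=h$ (up to a sign convention). The $A_\infty$-relations reduce to $d(f)=d(g)=0$ and $d(h)=-gf$. By the standard description of the Hom complex of strictly unital $A_\infty$-functors out of a finite directed category (cf.\ \cite[\S3.2]{Che24a}), a degree-$p$ prenatural transformation $\xi\to\xi'$ consists of three kinds of data:
\begin{itemize}
\item components $r_0,r_1,r_2$ of degree $p$;
\item components $s_1,s_2$ of degree $p-1$, attached to $a$ and $b$;
\item a component $t$ of degree $p-2$, attached to the pair $(b,a)$.
\end{itemize}
No other components occur, because $\B$ has no longer nonzero chains. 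The differential is the Hochschild-type differential. In degree $0$ its closed elements are exactly the sextuples $\Xi$ of \eqref{diag:morph_of_3-term_complex}.

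\emph{Step 2: morphisms in $\Mor(\Mor(\T))$.} Iterating the description of $\Mor(\T)$, we write an object of $\Mor(\Mor(\T))$ as a homotopy square $(\alpha,\beta,s)$ from $X\xrightarrow{u}Y$ to $X'\xrightarrow{u'}Y'$. A degree-$p$ morphism between two such squares is a matrix
\[
\begin{pmatrix}\Phi&0\\ \Theta&\Psi\end{pmatrix},
\qquad \Phi,\Psi\in\Mor(\T)^p,\quad \Theta\in\Mor(\T)^{p-1}.
\]
Unwinding, $\Phi$ and $\Psi$ each contribute two degree-$p$ maps and one degree-$(p-1)$ homotopy, while $\Theta$ contributes two degree-$(p-1)$ maps and one degree-$(p-2)$ map. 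For squares of the shape \eqref{diag:homotopy_square_from_xi}, the component of $\Phi$ on the lower-left corner lives in $\T(0,0)=0$. Every other slot matches exactly one datum of Step 1:
\[
\Phi\leftrightarrow(r_0,0,\ast),\qquad \Psi\leftrightarrow(r_1,r_2,s_2),\qquad \Theta\leftrightarrow(\ast,s_1,t).
\]
Here the starred components are those whose source or target is the zero object, so they also vanish. We define $\iota$ on morphisms by this identification, with signs to be fixed in Step 3. It is then bijective in each degree, hence an isomorphism of graded hom-spaces.

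\emph{Step 3: compatibility with differential and composition.} We check that $\iota$ commutes with the differentials and preserves composition; this gives a dg functor which is fully faithful, indeed an isomorphism on Hom complexes. This is the main obstacle. The differential on $\Mor(\Mor(\T))$ involves the sign $(-1)^p$ twice, once for each level of $\Mor$. The $A_\infty$ side has its own Koszul signs, coming from the $h$-terms in the formulas for $d(t)$ and for composition. I would first fix the identification on components with explicit signs, for instance $s_1\mapsto(-1)^p s_1$ and $t\mapsto\pm t$, chosen so that $d(\iota\Xi)=\iota(d\Xi)$. The guide is the degree-$0$ case, where $d(t)=r_2h-h'r_0-s_2f-g's_1$ must match the lower-left entry of $d\begin{pmatrix}\Phi&0\\ \Theta&\Psi\end{pmatrix}$, using the homotopy $-h$ of \eqref{diag:homotopy_square_from_xi}. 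Composition is then a direct matrix multiplication compared against Chen's composition formula. The only nontrivial term is the degree-$(p-2)$ component, $t''=r_2't+t'r_0+s_2's_1$ up to sign, which appears as the corner entry of the matrix product.

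Finally, identities go to identities, and unitality is immediate from strict unitality. Hence $\iota$ is a fully faithful dg functor.
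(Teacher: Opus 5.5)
Your plan matches the paper's: identify the nine slots of a degree-$n$ morphism in $\Mor(\Mor(\T))$ with the sextuple $(r_0,r_1,r_2,s_1,s_2,t)$, note that the other three slots vanish because $0$ is a strict zero object, and check $d$ and composition by hand. The gap is that you never actually define $\iota$ on morphisms. The signs are postponed to Step~3, and Step~3 only describes what you would do. A degreewise bijection of graded Hom spaces does not by itself give a dg functor. Producing signs compatible with both the differentials and the compositions is therefore the whole content of the lemma, and the concrete bookkeeping you do offer would not pass that check.

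First, your slot assignment in Step~2 is transposed. In the paper's convention, $\iota(\xi)$ is the closed morphism $\begin{pmatrix}0&0\\-h&g\end{pmatrix}$ in $\Mor(\T)$ from the top row $f\colon A\to B$ to the bottom row $(0\to C)$. This is what makes $-h$ a valid homotopy, since $d(-h)=gf$. Hence:
\begin{itemize}
\item $\Phi\in\Mor(\T)(f,f')^n$ carries $(r_0,r_1,s_1)$;
\item $\Psi\in\Mor(\T)\bigl((0\to C),(0\to C')\bigr)^n$ carries only $r_2$;
\item $\Theta\in\Mor(\T)\bigl(f,(0\to C')\bigr)^{n-1}$ carries $(s_2,t)$.
\end{itemize}
Your assignment $\Phi\leftrightarrow(r_0,0,\ast)$, $\Psi\leftrightarrow(r_1,r_2,s_2)$ reads the square as a morphism from the left column to the right column. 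For that reading the closed homotopy would be $h$, not the $-h$ you invoke in Step~3.

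Second, your sample signs ($s_1\mapsto(-1)^n s_1$, with $r_0,r_1$ untouched) cannot work. Composition in $\Mor(\T)$ is plain matrix multiplication, while in $\cpx^3_{\dg}$ the $s_1$-component of $\Theta\circ\Xi$ is $r_1's_1+(-1)^ns_1'r_0$. Suppose $\Phi=\begin{pmatrix}a_nr_0&0\\c_ns_1&b_nr_1\end{pmatrix}$. Comparing the $s_1$-slots of $\iota(\Theta)\iota(\Xi)$ and $\iota(\Theta\circ\Xi)$ forces $b_mc_n=c_{m+n}$ and $c_ma_n=(-1)^nc_{m+n}$, hence $a_n=(-1)^nb_n$. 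So one of $r_0,r_1$ must itself be twisted by $(-1)^n$; twisting only $s_1$ fails.

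The paper takes
\[
\varphi_\Xi=\begin{pmatrix}r_0&0\\(-1)^{n+1}s_1&(-1)^nr_1\end{pmatrix},\qquad
\sigma_\Xi=\begin{pmatrix}0&0\\-t&-s_2\end{pmatrix},\qquad
\psi_\Xi=\begin{pmatrix}0&0\\0&r_2\end{pmatrix}.
\]
With these choices the required identities are routine. For instance, the block $\sigma_\Theta\varphi_\Xi+\psi_\Theta\sigma_\Xi$ of $\iota(\Theta)\iota(\Xi)$ has entries $-r_2't-t'r_0+(-1)^ns_2's_1$ and $-r_2's_2+(-1)^{n+1}s_2'r_1$. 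These are exactly the negatives of the $t$- and $s_2$-components of $\Theta\circ\Xi$, as $\sigma_{\Theta\circ\Xi}$ requires.
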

\begin{proof}
The assignment $\iota$ has already been defined on objects.
It remains to define it on morphisms.
Let $\Xi\colon\xi\to\xi'$ be a homogeneous morphism of degree $n$ in $\cpx^3_{\dg}(\T)$.
The morphism
$
\Xi=(r_0,r_1,r_2,s_1,s_2,t)
$ is displayed as in the diagram \eqref{diag:morph_of_3-term_complex}:
\[
\begin{tikzcd}[row sep=0.8cm]
A
 \arrow{r}{f}
 \arrow[bend left]{rr}{h}
 \arrow{d}[swap]{r_0}
 \arrow[draw=red]{rd}[swap]{\textcolor{red}{s_1}}
 \arrow[draw=blue]{rrd}{\textcolor{blue}{t}}
&B
 \arrow{r}{g}
 \arrow{d}[swap]{r_1}
 \arrow[draw=red]{rd}{\textcolor{red}{s_2}}
&C\arrow{d}{r_2}
\\
A'
 \arrow{r}[swap]{f'}
 \arrow[bend right]{rr}[swap]{h'}
&B'\arrow{r}[swap]{g'}
&C'.
\end{tikzcd}
\]
where
\[
|r_0|=|r_1|=|r_2|=n,\qquad
|s_1|=|s_2|=n-1,\qquad
|t|=n-2.
\]
We first consider a morphism of $3$-term complexes, namely, we suppose $|r_i|=0, d(r_i)=0$ for $i=0,1,2$ and
\[
\begin{aligned}
&|s_1|=-1,
&&d(s_1)=f'r_0-r_1f,\\
&|s_2|=-1,
&&d(s_2)=g'r_1-r_2g,\\
&|t|=-2,
&&d(t)=r_2\circ h-h'\circ r_0-s_2\circ f-g'\circ s_1 .
\end{aligned}
\]
Then we define a closed degree-zero morphism
$\begin{psmallmatrix}
\varphi & 0\\
\sigma & \psi
\end{psmallmatrix}\colon \iota(\xi)\to\iota(\xi')$
in $\Mor(\Mor(\T))$,
where
\[
\varphi=\begin{pmatrix}
r_0 & 0\\
-s_1 & r_1
\end{pmatrix},\quad
\sigma=\begin{pmatrix}
0 & 0\\
-t & -s_2
\end{pmatrix}, \quad
\psi=\begin{pmatrix}
0 & 0\\
0 & r_2
\end{pmatrix}.
\]
Actually, we can verify that $\iota(\Xi)$ is closed by the following calculations:
The diagrams \eqref{diag:homotopy_square_from_xi} which represent $\iota(\xi)$ and $\iota(\xi')$ give morphisms 
\[
\alpha=\begin{pmatrix}
    0 & 0\\
    -h & g
\end{pmatrix}
\colon f\to 0
\quad\text{and}\quad
\alpha'=\begin{pmatrix}
    0 & 0\\
    -h' & g'
\end{pmatrix}
\colon f'\to 0
\]
in $Z^0\Mor(\T)$.
Since $\varphi$ and $\psi$ are closed morphisms, we have 
\begin{equation*}  
d\begin{pmatrix}
\varphi & 0\\
\sigma & \psi
\end{pmatrix}
=
\begin{pmatrix}
-d\varphi & 0\\
d\sigma+\alpha'\varphi-\psi\alpha & d\psi
\end{pmatrix}
=
\begin{pmatrix}
0 & 0\\
d\sigma+\alpha'\varphi-\psi\alpha & 0
\end{pmatrix} .
\end{equation*}
It remains to check the lower left entry is zero:
\begin{align*}
d\sigma+\alpha'\varphi-\psi\alpha
&=
\begin{pmatrix}
0 & 0\\
-dt-s_2f & -ds_2
\end{pmatrix}+
\begin{pmatrix}
0 & 0\\
-h'r_0-g's_1 & g'r_1
\end{pmatrix}-
\begin{pmatrix}
0 & 0\\
-r_2h & r_2g
\end{pmatrix}
=
\begin{pmatrix}
0 & 0\\
0 & 0
\end{pmatrix}
\end{align*}
Now let us return to the general case, that is,
\[
\Xi\colon\xi\longrightarrow\xi'
\]
is a homogeneous morphism of degree $n$. We define
\[
\iota(\Xi)
=
\begin{pmatrix}
\varphi_\Xi&0\\
\sigma_\Xi&\psi_\Xi
\end{pmatrix}
\quad \text{by}\quad
\varphi_\Xi=
\begin{pmatrix}
r_0&0\\
(-1)^{n+1}s_1&(-1)^nr_1
\end{pmatrix},
\ 
\sigma_\Xi=
\begin{pmatrix}
0&0\\
-t&-s_2
\end{pmatrix},
\ 
\psi_\Xi=
\begin{pmatrix}
0&0\\
0&r_2
\end{pmatrix}.
\]
Here, $\varphi_\Xi$ and $\psi_\Xi$ have degree $n$, while
$\sigma_\Xi$ has degree $n-1$. For $n=0$, this agrees with the
construction above.
A direct calculation using the differentials and compositions in the
two morphism dg categories shows that
\[
d\bigl(\iota(\Xi)\bigr)=\iota(d\Xi)
\quad
\text{and}
\quad
\iota(\Theta\circ\Xi)=\iota(\Theta)\circ\iota(\Xi)
\]
for homogeneous composable morphisms $\Xi$ and $\Theta$.
It is also clear from the definition that $\iota$ preserves identities.
Hence, $\iota$ is a dg functor.
Since $0$ is a strict zero object, every homogeneous morphism between two objects in the image of $\iota$ is uniquely of the form described above.
Thus, $\iota$ induces an isomorphism on each morphism complex and is
fully faithful.
\end{proof}

Let us denote by
$
\mathbf{Y}_\A\colon\A\to\CC_{\dg}(\A)
$
the Yoneda dg functor.
Postcomposition with $\mathbf{Y}_\A$ induces a dg functor
\[
(\mathbf{Y}_\A)_*
\colon
\cpx^3_{\dg}(\A)
\longrightarrow
\cpx^3_{\dg}\bigl(\CC_{\dg}(\A)\bigr).
\]
Now let us define the dg totalization functor as follows:
\[
\Tot_\A\colon \cpx^3_{\dg}(\A)\xto{(\mathbf{Y}_\A)_*}\cpx^3_{\dg}(\T)\xto{\iota}\Mor(\Mor(\T))\xto{\Mor(\cone)} \Mor(\T)\xto{\cone}\T .
\]
Since its image is contained in $\pretr(\A)$, the dg functor $\Tot_\A$ factors through the
dg functor
\begin{equation}
\label{diag:dg-totalization}
\Tot_\A\colon\cpx^3_{\dg}(\A)\longrightarrow\pretr(\A).
\end{equation}

\begin{remark}\label{rem:dg-totalization}
\begin{enumerate}
\item 
The dg functor $\Tot_\A$ is not necessarily fully faithful.
\item 
Chen gives an alternative construction of the dg totalization functor in terms of twisted complexes; see \cite[\S 3.30]{Che23}.
\end{enumerate}
\end{remark}

\begin{corollary}\label{cor:totalization_functor}
The dg totalization functor \eqref{diag:dg-totalization} induces a
functor
\[
\Tot_\A\colon
\cpx^3(\A)\longrightarrow\tr(\A),
\qquad
\xi\longmapsto M_\xi.
\]
In particular, if $\xi\cong\eta$ in $\cpx^3(\A)$, then
$M_\xi\cong M_\eta$ in $\tr(\A)$.
\end{corollary}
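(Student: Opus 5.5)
We pass to homotopy categories using the general principle that a dg functor $F\colon\mathscr{X}\to\mathscr{Y}$ induces a functor $H^0(F)\colon H^0(\mathscr{X})\to H^0(\mathscr{Y})$. Concretely, $F$ sends closed degree-zero morphisms to closed degree-zero morphisms, since it commutes with differentials and preserves degrees. It sends coboundaries to coboundaries, since $F(d\,u)=d\,F(u)$. It also preserves composition and identities. Applied to the dg totalization functor \eqref{diag:dg-totalization}, this yields a functor
\[
H^0(\Tot_\A)\colon H^0\bigl(\cpx^3_{\dg}(\A)\bigr)\longrightarrow H^0\bigl(\pretr(\A)\bigr)=\tr(\A).
\]
Here we use the standard identification of $\tr(\A)$ with $H^0(\pretr(\A))$, a full triangulated subcategory of $\CD(\A)$ through the canonical functor $H^0(\CC_{\dg}(\A))\to\CD(\A)$.

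The next step is to identify the source with $\cpx^3(\A)$. By \cite[Def.~3.14]{Che24a}, the category $\cpx^3(\A)=\CH_{3t}(\A)$ is defined as $H^0(\Fun_{A_\infty}(\B,\A))=H^0(\cpx^3_{\dg}(\A))$. Unwinding this, closed degree-zero morphisms of $\cpx^3_{\dg}(\A)$ are exactly the sextuples $(r_0,r_1,r_2,s_1,s_2,t)$ satisfying the equations in \eqref{diag:morph_of_3-term_complex}, and the homotopy relation is the one used to define morphisms there. I take this identification as part of the recalled definition; the description of morphism complexes in the proof that $\iota$ is a dg functor confirms it at the level of degree-zero cycles.

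It remains to compute $\Tot_\A$ on objects. We have $\iota(\xi)$ equal to the square \eqref{diag:homotopy_square_from_xi}, so $\Mor(\cone)$ produces the morphism $\cone(f)\to\cone(0\to C)=C$, given by $\begin{bsmallmatrix}-h\amph g\end{bsmallmatrix}$ up to the sign conventions for $\cone$ on the homotopy component. This morphism is $\psi_\xi$, and taking $\cone$ once more gives $M_\xi=\cone(\psi_\xi)$. The only step needing care is this sign bookkeeping: the resulting cone must agree with $M_\xi$ on the nose, or at least up to a natural isomorphism; either suffices for the statement.

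The final assertion is then formal. Any functor preserves isomorphisms, so if $\xi\cong\eta$ in $\cpx^3(\A)$, then $M_\xi=\Tot_\A(\xi)\cong\Tot_\A(\eta)=M_\eta$ in $\tr(\A)$.
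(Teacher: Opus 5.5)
Your proposal is correct and follows the paper's proof: apply $H^0$ to the dg functor $\Tot_\A$, identify $H^0(\cpx^3_{\dg}(\A))$ with $\cpx^3(\A)$ and $H^0(\pretr(\A))$ with $\tr(\A)$, and note that the construction gives $\Tot_\A(\xi)=\cone(\psi_\xi)=M_\xi$. Compared with the paper, you only spell out explicitly the identifications and the object-level computation (the square $\iota(\xi)$ is sent to $\psi_\xi$ under $\Mor(\cone)$), which the paper leaves implicit.
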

\begin{proof}
Applying $H^0$ to the dg functor $\Tot_\A$ gives a functor
$\cpx^3(\A)\to \tr(\A)$.
Moreover, the construction of $\Tot_\A$
identifies $\Tot_\A(\xi)$ with the totalization $M_\xi$ defined in
\cref{rem:defining_diagram_of_totalization}.
\end{proof}

%%%%%%%%%%%%%%%%%%%%%%%%%%%%%%%%%%%%%%%%%%%%%%%%%%%%%%%%%%%%%%%%%
\subsection{The dg duality}\label{subsec:dg_dual}
%%%%%%%%%%%%%%%%%%%%%%%%%%%%%%%%%%%%%%%%%%%%%%%%%%%%%%%%%%%%%%%%%
Having characterized $\CC_2(\A)$ in terms of defects, we next compare it with $\CC_2(\A^{\op})$ via dg duality. To this end, we recall the dg $\A$-dual.
The assignment
\[
(-)^\vee\colon
\pretr(\A)^{\op}\xto{\sim}\pretr(\A^{\op}),
\qquad
M\longmapsto
M^\vee\deff\pretr(\A)(M,-)|_{\A},
\]
defines a dg $\A$-duality. Thus, for $A\in\A$, we have
$
M^\vee(A)=\pretr(\A)(M,A).
$
We also consider the shifted dg $\A$-duality
\[
\Sigma^2(-)^\vee\colon
\pretr(\A)^{\op}\xto{\sim}\pretr(\A^{\op}).
\]
In particular, $\Sigma^2M^\vee\cong\pretr(\A)(\Sigma^{-2}M,-)|_{\A}$.

\begin{remark}
When $\A=A$ is a dg algebra, the above duality is the standard one
$
M\longmapsto\Hom_A^\bullet(M,A)
$, see \cite[\S 3.1, (3.6)]{Shk13}.
\end{remark}

\begin{lemma}\label{lem:shifted_duality_on_C2}
The shifted dg $\A$-duality induces a duality
\[
\Sigma^2(-)^\vee\colon
\CC_2(\A)^{\op}\xto{\sim}\CC_2(\A^{\op}).
\]
\end{lemma}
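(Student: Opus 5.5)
Since $(-)^\vee\colon\pretr(\A)^{\op}\to\pretr(\A^{\op})$ is already a dg equivalence, so is $\Sigma^2(-)^\vee$. It therefore induces an equivalence $\tr(\A)^{\op}\simeq\tr(\A^{\op})$ on homotopy categories, compatible with shifts up to sign reversal: $(\Sigma M)^\vee\cong\Sigma^{-1}M^\vee$. All that remains is to check that this equivalence sends $\CC_2(\A)$ into $\CC_2(\A^{\op})$, and that the inverse (the analogous shifted duality for $\A^{\op}$, using $\A^{\op\op}=\A$) sends $\CC_2(\A^{\op})$ into $\CC_2(\A)$. By symmetry only the first inclusion needs proof.

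The key input is the behaviour of $(-)^\vee$ on representables. By Yoneda, $A^\vee=\pretr(\A)(A,-)|_\A$ is the representable $\A^{\op}$-module at $A$. Full faithfulness of the duality on $H^0$ then gives natural isomorphisms
\[
\Hom_{\tr(\A)}(M,\Sigma^iX)\cong\Hom_{\tr(\A^{\op})}\bigl(\Sigma^{-i}X^\vee,M^\vee\bigr)
\quad\text{and}\quad
\Hom_{\tr(\A)}(X,\Sigma^iM)\cong\Hom_{\tr(\A^{\op})}\bigl(\Sigma^{-i}M^\vee,X^\vee\bigr)
\]
for $X\in\A$. I will verify the shift conventions carefully here, since this sign bookkeeping is the only real point of the proof.

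Now set $N=\Sigma^2M^\vee$ and translate the two vanishing conditions of \cref{def:C2}. The first gives
\[
\Hom(X^\vee,\Sigma^jN)=\Hom(X^\vee,\Sigma^{j+2}M^\vee)\cong\Hom_{\tr(\A)}(M,\Sigma^{j+2}X),
\]
which vanishes for $j+2\neq2$, that is, for $j\neq0$. The second gives
\[
\Hom(N,\Sigma^jX^\vee)=\Hom(\Sigma^{2}M^\vee,\Sigma^jX^\vee)\cong\Hom(M^\vee,\Sigma^{j-2}X^\vee)\cong\Hom_{\tr(\A)}(X,\Sigma^{2-j}M),
\]
which vanishes for $2-j\neq0$, that is, for $j\neq2$. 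Every object of $\A^{\op}$ is of the form $X^\vee$, so $N\in\CC_2(\A^{\op})$. The shift by $2$ is exactly what exchanges the two conditions: the amplitude $\{0\}$ becomes $\{2\}$ and vice versa.

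Finally, the inverse equivalence $\pretr(\A^{\op})^{\op}\to\pretr(\A)$ is $\Sigma^{2}(-)^\vee$ for $\A^{\op}$, up to natural isomorphism, because the double dual is naturally isomorphic to the identity on $\pretr(\A)$. Applying the same argument to $\A^{\op}$ shows that it maps $\CC_2(\A^{\op})$ into $\CC_2(\A)$. Both subcategories are full and closed under isomorphisms, so the restriction is an equivalence $\CC_2(\A)^{\op}\simeq\CC_2(\A^{\op})$.

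The main obstacle is pinning down the sign of the shift, i.e. that dualizing sends $\Sigma^i$ to $\Sigma^{-i}$, together with the identification $X^\vee\cong\A^{\op}(-,X)$. Both follow from the definition $M^\vee=\pretr(\A)(M,-)|_\A$, since $\pretr(\A)(\Sigma M,-)\cong\Sigma^{-1}\pretr(\A)(M,-)$.
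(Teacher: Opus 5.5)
Your proposal is correct and follows essentially the same route as the paper: both translate the two vanishing conditions of $\CC_2$ through the contravariant duality, with the shift by $2$ exchanging them, and both obtain the quasi-inverse from the same construction for $\A^{\op}$ via $\Sigma^2(\Sigma^2M^\vee)^\vee\cong M$. There is one harmless sign slip in your second display: the last group should be $\Hom_{\tr(\A)}(X,\Sigma^{j-2}M)$, as in the paper, not $\Hom_{\tr(\A)}(X,\Sigma^{2-j}M)$; this does not affect the conclusion, since the condition only requires vanishing for nonzero exponents.
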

\begin{proof}
Let $M\in\CC_2(\A)$. For every $X\in\A$ and $i\in\mathbb Z$,
the dg $\A$-duality yields natural isomorphisms
\[
\Hom_{\CD(\A^{\op})}
 \bigl(X,\Sigma^i(\Sigma^2M^\vee)\bigr)
\cong
\Hom_{\CD(\A)}
 \bigl(M,\Sigma^{i+2}X\bigr)
\]
and
\[
\Hom_{\CD(\A^{\op})}
 \bigl(\Sigma^2M^\vee,\Sigma^iX\bigr)
\cong
\Hom_{\CD(\A)}
 \bigl(X,\Sigma^{i-2}M\bigr).
\]
The first group vanishes for $i\neq0$, while the second vanishes
for $i\neq2$. Hence, we have $\Sigma^2M^\vee\in\CC_2(\A^{\op})$.
The corresponding construction for $\A^{\op}$ is a quasi-inverse,
since $\Sigma^2\bigl(\Sigma^2M^\vee\bigr)^\vee\cong M$.
Thus, the shifted dg duality restricts to the asserted duality.
\end{proof}

\begin{lemma}\label{lem:duality_on_3-term_complexes}
Taking opposites defines a dg equivalence
\[
(-)^{\op}\colon
\cpx^3_{\dg}(\A)^{\op}
\xto{\sim}
\cpx^3_{\dg}(\A^{\op}).
\]
\end{lemma}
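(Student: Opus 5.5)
The plan is to write down $(-)^{\op}$ explicitly on objects and on morphism complexes, check that it is compatible with differentials and composition (up to the Koszul signs of the opposite dg category), and then observe that it is bijective on objects and an isomorphism on each morphism complex. Applying the same construction to $\A^{\op}$ gives an inverse, since $(\A^{\op})^{\op}=\A$.

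\textbf{Objects.} A $3$-term complex $\xi=(A\xto{f}B\xto{g}C,h)$ in $\A$ is sent to $\xi^{\op}\deff(C\xto{g^{\op}}B\xto{f^{\op}}A,\pm h^{\op})$ in $\A^{\op}$. The sign is chosen so that $d(\pm h^{\op})=-f^{\op}g^{\op}$. Here $f^{\op}g^{\op}=(gf)^{\op}$, possibly with a Koszul sign, which is trivial because $|f|=|g|=0$. Since $d(h)=-gf$, we may take the sign to be $+$.

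\textbf{Morphisms.} Equivalently, one can use the dg automorphism $\B^{\op}\cong\B$ that reverses the quiver $0\to1\to2$: $A_\infty$-functors $\B\to\A$ correspond to $A_\infty$-functors $\B^{\op}\to\A^{\op}$, and $\Fun_{A_\infty}(\B,\A)^{\op}\cong\Fun_{A_\infty}(\B^{\op},\A^{\op})$ is a standard identification. Concretely, a homogeneous degree-$n$ morphism $\Xi=(r_0,r_1,r_2,s_1,s_2,t)\colon\xi\to\xi'$ in $\cpx^3_{\dg}(\A)$ is a morphism $\xi'\to\xi$ in the opposite category. It is sent to $\Xi^{\op}\colon\xi'^{\op}\to\xi^{\op}$ with components
\[
(\epsilon_2r_2^{\op},\ \epsilon_1r_1^{\op},\ \epsilon_0r_0^{\op},\ \epsilon'_2s_2^{\op},\ \epsilon'_1s_1^{\op},\ \epsilon t^{\op}),
\]
where the roles of $s_1$ and $s_2$ are swapped and the signs $\epsilon_\bullet\in\{\pm1\}$ depend only on $n$. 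The choice is modeled on $\iota$ in the preceding lemma, where the signs $(-1)^n$ appear.

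\textbf{Verification.} There are three checks.
\begin{enumerate}[label=\textup{(\roman*)}]
\item The map commutes with differentials. This is an entrywise comparison of the formulas $d(s_1)=f'r_0-r_1f$, $d(s_2)$ and $d(t)$ with their mirrored versions in $\A^{\op}$. It uses $d(u^{\op})=d(u)^{\op}$ and $(uv)^{\op}=(-1)^{|u||v|}v^{\op}u^{\op}$.
\item The map is compatible with composition, where composition in $\cpx^3_{\dg}$ is given by Chen's formula in \cite[\S 3.2]{Che24a}, or equivalently by transport along $\iota$.
\item The map preserves identities, which is clear.
\end{enumerate}

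\textbf{Main obstacle.} The hard part is fixing the signs $\epsilon_\bullet$ so that (i) and (ii) hold simultaneously. First I would try $\epsilon_i=1$ on the $r$'s, $\epsilon'_j=(-1)^n$ on the $s$'s and $\epsilon=1$ on $t$, and test these against the closedness equations in degree $0$. If this choice fails, I would instead work through the fully faithful functor $\iota$ and the identification $\Mor(\T)^{\op}\cong\Mor(\T^{\op})$, with $(f\colon X\to Y)\mapsto(f^{\op}\colon Y\to X)$, which swaps the diagonal entries of the matrices. There the signs are forced by the matrix calculus.

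\textbf{Conclusion.} Once these checks are done, $(-)^{\op}$ is bijective on objects and a degreewise bijection on each morphism complex. Its inverse is the analogous functor for $\A^{\op}$, because the signs square to $1$. Hence it is an isomorphism of dg categories, and in particular a dg equivalence.
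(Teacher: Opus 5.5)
Your plan follows the paper's proof: the same object assignment $\xi^{\op}=(C\xto{g^{\op}}B\xto{f^{\op}}A,h^{\op})$, a mirrored sextuple on morphisms with $s_1,s_2$ swapped, entrywise checks of differential and composition, and involutivity. The gap is that the only non-formal content of this lemma is the choice of signs, and the one concrete choice you make fails. That choice was no sign on the $r$'s, $(-1)^n$ on the $s$'s, and no sign on $t$.

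It already fails in degree $0$. Write $\tilde s_1\colon C'\to B$ for the first homotopy of $\Xi^{\op}$. Transporting $d(s_2)=g'r_1-r_2g$ to $\A^{\op}$, it must satisfy $d(\tilde s_1)=g^{\op}r_2^{\op}-r_1^{\op}(g')^{\op}=(r_2g)^{\op}-(g'r_1)^{\op}=-d(s_2)^{\op}$. Hence $\tilde s_1=-s_2^{\op}$, not $+s_2^{\op}$, and the $t$-equation then forces $-t^{\op}$.

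In arbitrary degree, the $d(s_2)^{\op}$-term appears with sign $\epsilon_2'(n)$ in $d_{\cpx}(\Xi^{\op})$. In $(d_{\cpx}\Xi)^{\op}$, the opposite of a degree-$(n+1)$ morphism, it appears with $\epsilon_2'(n+1)$. So every sign must be independent of $n$, and no $(-1)^n$ pattern can work. The $r$-signs are likewise constant, and preservation of identities makes them $+1$. You are then forced to
\[
\Xi^{\op}=(r_2^{\op},r_1^{\op},r_0^{\op},-s_2^{\op},-s_1^{\op},-t^{\op}),
\]
which is exactly the paper's choice. The remaining work is the explicit verification the paper carries out. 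In the $t$-entry of the differential you need $h^{\op}r_2^{\op}=(-1)^n(r_2h)^{\op}$ and $r_0^{\op}(h')^{\op}=(-1)^n(h'r_0)^{\op}$. For composition you must prove $\Xi^{\op}\circ\Theta^{\op}=(-1)^{mn}(\Theta\circ\Xi)^{\op}$ from Chen's formula, whose $t$-component contains the cross term $(-1)^{n+1}s_2's_1$.

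Your fallback is also not the shortcut you suggest. The functor $\iota$ uses a strict zero object and is defined on $\cpx^3_{\dg}(\CC_{\dg}(\A))$, but $\CC_{\dg}(\A)^{\op}$ is not $\CC_{\dg}(\A^{\op})$. Even after fixing that, the square $\iota(\xi)$ read in the opposite category is the transpose of $\iota(\xi^{\op})$. You would therefore also need the flip symmetry of $\Mor(\Mor(-))$, which carries its own signs, so the signs are not simply "forced by the matrix calculus". Similarly, composing $\Fun_{A_\infty}(\B,\A)^{\op}\cong\Fun_{A_\infty}(\B^{\op},\A^{\op})$ with $\B^{\op}\cong\B$ gives a genuinely conceptual proof only with a precise reference fixing the $A_\infty$ sign conventions. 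Invoked loosely, it just relocates the same bookkeeping.
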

\begin{proof}
We first define the assignment on objects.
Let
\[
\xi=(A\xto{f}B\xto{g}C,h)
\]
be a $3$-term complex in $\A$.
We define $\xi^{\op}$ to be the following diagram in $\A^{\op}$:
\[
\begin{tikzcd}[row sep=0.6cm]
C
\arrow{r}{g^{\op}}
\arrow[bend right]{rr}[swap]{h^{\op}}
&
B\arrow{r}{f^{\op}}
&
A.
\end{tikzcd}
\]
Since
\[
d(h^{\op})
=(dh)^{\op}
=(-g\circ f)^{\op}
=-f^{\op}\circ g^{\op},
\]
the diagram $\xi^{\op}$ is a $3$-term complex in $\A^{\op}$.

Let
\[
\Xi=(r_0,r_1,r_2,s_1,s_2,t)\colon\xi\longrightarrow\xi'
\]
be a homogeneous morphism of degree $n$ in $\cpx^3_{\dg}(\A)$, as displayed in \eqref{diag:morph_of_3-term_complex},
where
\[
|r_0|=|r_1|=|r_2|=n,\qquad
|s_1|=|s_2|=n-1,\qquad
|t|=n-2.
\]
Writing
\[
d_{\cpx}(\Xi)
=
(\widehat{r_0},\widehat{r_1},\widehat{r_2},
 \widehat{s_1},\widehat{s_2},\widehat{t}),
\]
the differential in $\cpx^3_{\dg}(\A)$ is given by
\[
\begin{aligned}
\widehat{r_i}
&=d(r_i),\\
\widehat{s_1}
&=
d(s_1)+(-1)^{n+1}f'r_0+(-1)^nr_1f,\\
\widehat{s_2}
&=
d(s_2)+(-1)^{n+1}g'r_1+(-1)^nr_2g,\\
\widehat{t}
&=
d(t)
+(-1)^ng's_1
+h'r_0
+(-1)^ns_2f
+(-1)^{n+1}r_2h.
\end{aligned}
\]
We define
\[
\Xi^{\op}=(r_2^{\op},r_1^{\op},r_0^{\op},-s_2^{\op},-s_1^{\op},-t^{\op})\colon(\xi')^{\op}\longrightarrow\xi^{\op}
\]
by the following diagram:
\begin{equation}\label{diag:opposite_morphism_of_3-term_complex}
\begin{tikzcd}[row sep=1.0cm, column sep=1.5cm]
C'
 \arrow{r}{(g')^{\op}}
 \arrow[bend left]{rr}{(h')^{\op}}
 \arrow{d}[swap]{r_2^{\op}}
 \arrow[draw=red]{rd}[swap]{\textcolor{red}{-s_2^{\op}}}
 \arrow[draw=blue]{rrd}{\textcolor{blue}{-t^{\op}}}
&
B'
 \arrow{r}{(f')^{\op}}
 \arrow{d}[swap]{r_1^{\op}}
 \arrow[draw=red]{rd}{\textcolor{red}{-s_1^{\op}}}
&
A'\arrow{d}{r_0^{\op}}
\\
C
 \arrow{r}[swap]{g^{\op}}
 \arrow[bend right]{rr}[swap]{h^{\op}}
&
B\arrow{r}[swap]{f^{\op}}
&
A
\end{tikzcd}
\end{equation}
It can be checked by a direct calculation that the assignment on each morphism complex is a chain map, namely, $d_{\cpx}(\Xi^{\op})=(d_{\cpx}(\Xi))^\op$.
Indeed, the differential
\[
d_{\cpx}(\Xi^{\op})=(\widehat{r_2^{\op}},\widehat{r_1^{\op}},\widehat{r_0^{\op}},
 \widehat{-s_2^{\op}},\widehat{-s_1^{\op}},\widehat{-t^{\op}})
\]
is given by
\[
\begin{aligned}
\widehat{r_i^{\op}}
&=d(r_i^{\op})=d(r_i)^{\op},\\
\widehat{-s_2^{\op}}
&=
d(-s_2^{\op})+(-1)^{n+1}g^{\op}r_2^{\op}+(-1)^nr_1^{\op}(g')^{\op}\\
&=-d(s_2)^{\op}-(-1)^{n+1}(g'r_1)^{\op}-(-1)^{n}(r_2g)^{\op}=-\widehat{s_2}^{\op},\\
\widehat{-s_1^{\op}}
&=
d(-s_1^{\op})+(-1)^{n+1}f^{\op}r_1^{\op}+(-1)^nr_0^{\op}(f')^{\op}\\
&=-d(s_1)^{\op}-(-1)^{n+1}(f'r_0)^{\op}-(-1)^{n}(r_1f)^{\op}=-\widehat{s_1}^{\op},\\
\widehat{-t^{\op}}
&=
d(-t^{\op})
+(-1)^{n+1}f^{\op}s_2^{\op}
+h^{\op}r_2^{\op}
+(-1)^{n+1}s_1^{\op}(g')^{\op}
+(-1)^{n+1}r_0^{\op}(h')^{\op}\\
&=
-d(t)^{\op}
-(-1)^{n}(g's_1)^{\op}
-(h'r_0)^{\op}
-(-1)^{n}(s_2f)^{\op}
-(-1)^{n+1}(r_2h)^{\op}
=-\widehat{t}^{\op}.
\end{aligned}
\]

It remains to verify compatibility with compositions.
Let
\[
\Xi=(r_0,r_1,r_2,s_1,s_2,t)\colon\xi\longrightarrow\xi'
\qquad\text{and}\qquad
\Theta=
(r'_0,r'_1,r'_2,s'_1,s'_2,t')\colon\xi'\longrightarrow\xi''
\]
be homogeneous morphisms of degrees $n$ and $m$, respectively.
Since the source is the opposite dg category, we need to verify that
\[
\Xi^{\op}\circ\Theta^{\op}
=
(-1)^{mn}(\Theta\circ\Xi)^{\op}.
\]
By the definition of composition in $\cpx^3_{\dg}(\A)$, we have
\[
\Theta\circ\Xi
=
\left(
r'_0r_0,\,
r'_1r_1,\,
r'_2r_2,\,
r'_1s_1+(-1)^ns'_1r_0,\,
r'_2s_2+(-1)^ns'_2r_1,\,
r'_2t+t'r_0+(-1)^{n+1}s'_2s_1
\right) .
\]
Moreover, $(\Theta\circ\Xi)^{\op}$ is given by the sextuple
\begin{align*}
&(-1)^{mn}r_2^{\op}(r'_2)^{\op},\\
&(-1)^{mn}r_1^{\op}(r'_1)^{\op},\\
&(-1)^{mn}r_0^{\op}(r'_0)^{\op},\\
&-(-1)^{mn-m}s_2^{\op}(r'_2)^{\op}-(-1)^{mn}r_1^{\op}(s'_2)^{\op}=(-1)^{mn}\bigl(
-r_1^{\op}(s'_2)^{\op}-(-1)^ms_2^{\op}(r'_2)^{\op}
\bigr) ,\\
&-(-1)^{mn-m}s_1^{\op}(r'_1)^{\op}-(-1)^{mn}r_0^{\op}(s'_1)^{\op}=(-1)^{mn}\bigl(
-r_0^{\op}(s'_1)^{\op}-(-1)^m s_1^{\op}(r'_1)^{\op}
\bigr),\\
&-(-1)^{mn-2m}t^{\op}(r'_2)^{\op}-(-1)^{mn-2n}r_0^{\op}(t')^{\op}-(-1)^{n+1}(-1)^{(m-1)(n-1)}s_1^{\op}(s'_2)^{\op}\\
&\quad=(-1)^{mn}\bigl(
-r_0^{\op}(t')^{\op}-t^{\op}(r'_2)^{\op}+(-1)^{m+1}s_1^{\op}(s'_2)^{\op}
\bigr) .
\end{align*}
On the other hand, the opposite morphisms are
\[
\Xi^{\op}=(r_2^{\op},r_1^{\op},r_0^{\op},-s_2^{\op},-s_1^{\op},-t^{\op})
\quad\text{and}\quad
\Theta^{\op}=\bigl((r'_2)^{\op},(r'_1)^{\op},(r'_0)^{\op},-(s'_2)^{\op},-(s'_1)^{\op},-(t')^{\op}\bigr) .
\]
Their composite $\Xi^{\op}\circ \Theta^{\op}$ is given by the sextuple
\begin{align*}
&r_2^{\op}(r'_2)^{\op}, \\
&r_1^{\op}(r'_1)^{\op}, \\
&r_0^{\op}(r'_0)^{\op}, \\
&-r_1^{\op}(s'_2)^{\op}-(-1)^m s_2^{\op}(r'_2)^{\op}, \\
&-r_0^{\op}(s'_1)^{\op}-(-1)^m s_1^{\op}(r'_1)^{\op}, \\
&-r_0^{\op}(t')^{\op}-t^{\op}(r'_2)^{\op}+(-1)^{m+1}s_1^{\op}(s'_2)^{\op} .
\end{align*}
Comparing the above two sextuples, we obtain
$
\Xi^{\op}\circ\Theta^{\op}
=
(-1)^{mn}(\Theta\circ\Xi)^{\op}$.
Thus the assignment is compatible with compositions. Together with
the preceding calculation concerning the differentials, this shows
that $(-)^{\op}$ defines a dg functor. Applying the same construction
twice gives the identity dg functor. Hence, $(-)^{\op}$ is a dg
equivalence.
\end{proof}

\begin{lemma}\label{lem:duality_on_defects}
There is a natural isomorphism of dg functors
\[
\Tot_{\A^{\op}}\circ(-)^{\op}
\cong
\Sigma^2(-)^\vee\circ\Tot_\A^{\op}.
\]
Equivalently, the following diagram commutes up to a natural
isomorphism:
\[
\begin{tikzcd}
\cpx^3_{\dg}(\A)^{\op}
\arrow{r}{(-)^{\op}}
\arrow{d}[swap]{\Tot_\A^{\op}}
&
\cpx^3_{\dg}(\A^{\op})
\arrow{d}{\Tot_{\A^{\op}}}
\\
\pretr(\A)^{\op}
\arrow{r}[swap]{\Sigma^2(-)^\vee}
&
\pretr(\A^{\op}).
\end{tikzcd}
\]
In particular, for every $3$-term complex $\xi$ in $\A$, there is a
natural isomorphism $M_{\xi^{\op}}\cong\Sigma^2(M_\xi)^\vee$.
\end{lemma}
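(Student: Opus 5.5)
We will construct the isomorphism explicitly on the level of underlying graded modules, as a strict isomorphism of dg $\A^{\op}$-modules. We then check naturality as a strict commutation with the morphism complexes.

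First, unwind $\Tot_\A(\xi)$. By the definition of $\iota$ and iterated cones, the underlying graded module of $M_\xi$ is
\[
M_\xi=\Sigma^2A\oplus\Sigma B\oplus C ,
\]
with a lower-triangular differential. Its entries are the internal differentials and the components $f\colon \Sigma^2A\to\Sigma B$, $g\colon\Sigma B\to C$ and $h\colon\Sigma^2A\to C$, all with appropriate signs. Dually, $M_{\xi^{\op}}$ in $\pretr(\A^{\op})$ has underlying graded module $\Sigma^2C\oplus\Sigma B\oplus A$, now with representables over $\A^{\op}$, with components $g^{\op}$, $f^{\op}$, $h^{\op}$.

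Next compute $\Sigma^2(M_\xi)^\vee=\pretr(\A)(\Sigma^{-2}M_\xi,-)|_\A$. Since $(-)^\vee$ is additive and sends the representable $X$ to the representable $X$ of $\A^{\op}$, with $(\Sigma^kX)^\vee=\Sigma^{-k}X$, we get
\[
\Sigma^2(M_\xi)^\vee\cong \Sigma^{2}\bigl(\Sigma^{-2}A\oplus\Sigma^{-1}B\oplus C\bigr)=A\oplus\Sigma B\oplus\Sigma^2C
\]
as graded modules. The differential is the transpose of that of $M_\xi$, twisted by the Koszul signs arising from dualizing and shifting.

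The candidate isomorphism is therefore the matrix that swaps the $A$ and $C$ summands, multiplied by a diagonal sign matrix $\mathrm{diag}(\epsilon_A,\epsilon_B,\epsilon_C)$. The signs are chosen so that the transposed differential matches the differential of $M_{\xi^{\op}}$, which is built from $g^{\op}$, $f^{\op}$, $h^{\op}$. The main obstacle is exactly these signs. The relevant sign conventions are the cone convention of $\Mor(\T)$ (the $(-1)^p$ in the differential), the shift isomorphisms $(\Sigma X)^\vee\cong\Sigma^{-1}X^\vee$, and the sign in $d(h^{\op})$. I would first fix these signs on objects by comparing the three off-diagonal entries; each entry gives one equation on ratios of the $\epsilon$'s. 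Consistency of these three equations is forced by $d(h)=-gf$ together with $d(h^{\op})=-f^{\op}g^{\op}$.

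Finally, naturality is checked on morphisms. For a homogeneous $\Xi=(r_0,r_1,r_2,s_1,s_2,t)$ of degree $n$, $\Tot_\A(\Xi)$ is the lower-triangular matrix with diagonal entries $r_0,r_1,r_2$ (up to signs $(-1)^n$ from $\iota$) and off-diagonal entries $s_1,s_2,t$. Its dual is the signed transpose, which should equal, after conjugating by the sign matrix, $\Tot_{\A^{\op}}(\Xi^{\op})$ with entries $r_2^{\op},r_1^{\op},r_0^{\op},-s_2^{\op},-s_1^{\op},-t^{\op}$, as in Lemma~\ref{lem:duality_on_3-term_complexes}. The minus signs built into $\Xi^{\op}$ are designed to absorb the dualization signs, so this is a finite entrywise check, again with the same signs as the main obstacle.

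Since all maps involved are strict isomorphisms of dg modules, they assemble into a natural isomorphism of dg functors. Specializing to objects and passing to $\tr$ gives $M_{\xi^{\op}}\cong\Sigma^2(M_\xi)^\vee$.
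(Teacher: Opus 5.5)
Your proposal is correct and follows the paper's proof. The paper likewise writes $M_\xi=\Sigma^2A\oplus\Sigma B\oplus C$, $\Sigma^2(M_\xi)^\vee=A\oplus\Sigma B\oplus\Sigma^2C$ and $M_{\xi^{\op}}=\Sigma^2C\oplus\Sigma B\oplus A$ with explicit triangular differentials, and takes $\alpha_\xi$ to be the antidiagonal matrix with entries $\id_{\Sigma^2C},-\id_{\Sigma B},\id_A$, so that in your notation $\epsilon_A=\epsilon_C=1$ and $\epsilon_B=-1$, which is what your three ratio equations give; naturality is then asserted, as in your sketch, by comparing $\Sigma^2(\Tot_\A(\Xi))^\vee$ entrywise with $\Tot_{\A^{\op}}(\Xi^{\op})$ using the signs built into $\Xi^{\op}$.
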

\begin{proof}
Let
\[
\xi=(A\xto{f}B\xto{g}C,h)
\]
be a $3$-term complex in $\A$.
By the explicit cone construction of the totalization in
\eqref{diag:defect}, we have
\[
\cone(f)=\left(
\Sigma A\oplus B, \begin{bmatrix}
    d_{\Sigma A} & 0 \\
    f & d_{B}
\end{bmatrix}
\right)
\quad
\text{and}
\quad
M_\xi=\left(
\Sigma^2A\oplus \Sigma B\oplus C,
\begin{bmatrix}
    d_{\Sigma^2A} & 0 & 0\\
    -f & d_{\Sigma B} & 0\\
    -h & g & d_C
\end{bmatrix}
\right),
\]
where, by abuse of notation, $f$, $g$, and $h$ also denote the corresponding shifted morphisms, all of which have degree $1$.
Using the canonical identifications
$
(\Sigma^iX)^\vee\cong\Sigma^{-i}X^\vee
$
and identifying $X^\vee$ with the corresponding object $X$ of
$\A^{\op}$,
we obtain
\[
\Sigma^2(M_\xi)^\vee=\left(
A\oplus \Sigma B\oplus \Sigma^2C,
\begin{bmatrix}
    d_{A} & -f^{\op} & -h^{\op}\\
    0 & d_{\Sigma B} & g^{\op}\\
    0 & 0 & d_{\Sigma^2 C}
\end{bmatrix}
\right) .
\]
It remains to show that there is an isomorphism $\alpha_\xi$ from $\Sigma^2(M_\xi)^\vee$ to the totalization of $\xi^{\op}$:
\[
M_{\xi^{\op}}
=\left(
\Sigma^2C\oplus \Sigma B\oplus A,
\begin{bmatrix}
    d_{\Sigma^2C} & 0 & 0\\
    -g^{\op} & d_{\Sigma B} & 0\\
    -h^{\op} & f^{\op} & d_A
\end{bmatrix}
\right).
\]
We set $\alpha_\xi$ to be
\[
\begin{bsmallmatrix}
0 & 0 & \id_{\Sigma^2 C}\\
0 & -\id_{\Sigma B} & 0\\
\id_A & 0 & 0
\end{bsmallmatrix}.
\]
This matrix is invertible, with inverse given by the same matrix, and
a direct calculation gives
\[
d_{M_{\xi^\op}}\alpha_\xi
=
\alpha_\xi d_{\Sigma^2(M_\xi)^\vee}.
\]

Now let
$
\Xi\colon\xi\to\xi'
$
be a homogeneous morphism in $\cpx^3_{\dg}(\A)$.
The explicit description of $\Xi^{\op}$ in
\cref{lem:duality_on_3-term_complexes} shows that
\[
\Tot_{\A^{\op}}(\Xi^{\op})\circ\alpha_{\xi'}
=
\alpha_\xi\circ
\Sigma^2\bigl(\Tot_\A(\Xi)\bigr)^\vee.
\]
Thus the isomorphisms $\alpha_\xi$ are dg natural in $\xi$ and yield the asserted natural isomorphism of dg functors.
\end{proof}

For later use, we introduce the following notation.

\begin{definition}\label{def:shifted_dual_subcategory}
Let $\CM$ be a full subcategory of $\CC_2(\A)$.
We denote by $\CM^\dagger$ the essential image of the composite
\[
\CM^{\op}
\lhook\joinrel\longrightarrow
\CC_2(\A)^{\op}
\xrightarrow{\ \Sigma^2(-)^\vee\ }
\CC_2(\A^{\op}).
\]
\end{definition}

As a direct consequence of \cref{lem:duality_on_defects}, we obtain
the following.

\begin{corollary}\label{cor:shifted_duality_on_defects}
Let $\CS$ be any class of short exact sequences in $\A$, and put $\CS^{\op}=\Set{\xi^{\op}\mid\xi\in\CS}$.
Then the shifted dg $\A$-duality restricts to a duality
\[
\Sigma^2(-)^\vee\colon
(\CM_{\CS})^{\op}
\xrightarrow{\sim}
\CM_{\CS^{\op}}.
\]
In particular, we have $\CM_{\CS}^{\dagger}=\CM_{\CS^{\op}}$.
\end{corollary}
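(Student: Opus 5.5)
The plan is to read the statement off \cref{lem:duality_on_defects} together with \cref{lem:shifted_duality_on_C2}, checking separately that the objects match and that the restricted functor is an equivalence.

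\emph{Step 1: objects.} Every $\xi\in\CS$ is short exact, so \cref{lem:defect_vanishing} gives $M_\xi\in\CC_2(\A)$. Applying $\Sigma^2(-)^\vee$ and using the natural isomorphism $M_{\xi^{\op}}\cong\Sigma^2(M_\xi)^\vee$ of \cref{lem:duality_on_defects}, the image of $M_\xi$ is isomorphic to $M_{\xi^{\op}}$, and $\xi^{\op}\in\CS^{\op}$. Since $\Sigma^2(-)^\vee$ induces a functor on homotopy categories, it sends isomorphic objects to isomorphic objects. Hence every object isomorphic to some $M_\xi$ is sent into $\CM_{\CS^{\op}}$, and the duality maps $(\CM_\CS)^{\op}$ into $\CM_{\CS^{\op}}$.

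Before this applies, I will check that $\xi^{\op}$ is again short exact, so that $\CM_{\CS^{\op}}$ is defined. By \cref{lem:shifted_duality_on_C2}, $\Sigma^2(M_\xi)^\vee$ lies in $\CC_2(\A^{\op})$. It follows that $M_{\xi^{\op}}\in\CC_2(\A^{\op})$, and \cref{lem:defect_vanishing}, applied to the connective dg category $\A^{\op}$, shows $\xi^{\op}$ is short exact.

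\emph{Step 2: full faithfulness and essential surjectivity.} Full faithfulness is inherited from the duality $\Sigma^2(-)^\vee\colon\CC_2(\A)^{\op}\xto{\sim}\CC_2(\A^{\op})$ of \cref{lem:shifted_duality_on_C2}, because $\CM_\CS$ and $\CM_{\CS^{\op}}$ are full subcategories of these. For essential surjectivity, take an object of $\CM_{\CS^{\op}}$. It is isomorphic to $M_{\eta}$ with $\eta=\xi^{\op}$ for some $\xi\in\CS$. By \cref{lem:duality_on_defects} we have $M_{\xi^{\op}}\cong\Sigma^2(M_\xi)^\vee$, and $M_\xi\in\CM_\CS$, so the object lies in the essential image.

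\emph{Step 3: the identity $\CM_\CS^{\dagger}=\CM_{\CS^{\op}}$.} This restates Step 2, since $\CM^\dagger$ is by \cref{def:shifted_dual_subcategory} the essential image of the restricted duality, and both sides are closed under isomorphism.

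The only point requiring care, and the main place where something could go wrong, is the bookkeeping in Step 1. One must confirm that $\CS^{\op}$ consists of short exact sequences, which the vanishing characterization handles cleanly. One must also confirm that the isomorphism of \cref{lem:duality_on_defects}, stated in $\pretr$, descends to $\tr(\A^{\op})$, which it does upon passing to $H^0$.
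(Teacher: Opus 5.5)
Your proposal is correct and takes the same route as the paper. The paper states the corollary as a direct consequence of the natural isomorphism $M_{\xi^{\op}}\cong\Sigma^2(M_\xi)^\vee$ of \cref{lem:duality_on_defects}, together with the duality $\Sigma^2(-)^\vee\colon\CC_2(\A)^{\op}\xto{\sim}\CC_2(\A^{\op})$ of \cref{lem:shifted_duality_on_C2}. Your extra checks spell out what the paper leaves implicit: that $\xi^{\op}$ is short exact (via \cref{lem:defect_vanishing} applied to the connective dg category $\A^{\op}$), and that full faithfulness and essential surjectivity restrict to the subcategories.
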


%%%%%%%%%%%%%%%%%%%%%%%%%%%%%%%%%%%%%%%%%%%%%%%%%%%%%%%%%%%%%%%%%
\section{Classifying exact dg structures}
%%%%%%%%%%%%%%%%%%%%%%%%%%%%%%%%%%%%%%%%%%%%%%%%%%%%%%%%%%%%%%%%%

\subsection{An auxiliary correspondence}
Throughout this subsection, we assume that $\A$ is connective and idempotent complete.
As a first step toward relating exact dg structures on $\A$ to suitable full subcategories of $\CC_2(\A)$, we establish an auxiliary correspondence. The argument is a dg analogue of \cite[\S 2.3]{Eno18}.

Let $\CM$ be a full subcategory of $\CC_2(\A)$.
We denote by $\CS_\CM$ the class of all $3$-term complexes
$
\xi=(A\xto{f}B\xto{g}C,h)
$
in $\A$ whose associated totalization $M_\xi$ belongs to $\CM$.
By \cref{lem:defect_vanishing}, every $3$-term complex belonging to
$\CS_\CM$ is a short exact sequence.
Thus, we have the following two maps:
\begin{equation}\label{SM}
\Set{\text{full subcategories of }\CS_{\all}}
\underset{\mathsf{S}}{\overset{\mathsf{M}}{\rightleftarrows}}
\Set{\text{full subcategories of }\CC_2(\A)}
\end{equation}
given by
\[
\mathsf{S}(\CM)=\CS_\CM
\qquad\text{and}\qquad
\mathsf{M}(\CS)=\CM_\CS.
 \]

We briefly review the standard $t$-structure on the derived category of a connective dg category.

\begin{lemma}[{\cite[Lem.~2.9]{Che24b}; see also \cite[Theorem~III.2.3]{BR07}}]
\label{lem:standard_t-str}
Let $\A$ be a connective dg category.
Then the derived category $\CD(\A)$ admits the standard $t$-structure $\bigl(\CD(\A)^{\leq0},\CD(\A)^{\geq0}\bigr)$
given by
\begin{align*}
\CD(\A)^{\leq0}
&=
\Set{M\in\CD(\A)\mid H^i(M)=0\text{ for all }i>0},
\\
\CD(\A)^{\geq0}
&=
\Set{M\in\CD(\A)\mid H^i(M)=0\text{ for all }i<0}.
\end{align*}
Moreover, the zeroth cohomology functor induces an equivalence
\[
H^0\colon
\CH_\A
\deff
\CD(\A)^{\leq0}\cap\CD(\A)^{\geq0}
\xrightarrow{\sim}
\Mod H^0(\A).
\]
\end{lemma}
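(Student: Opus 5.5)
The statement to prove is \cref{lem:standard_t-str}: the standard $t$-structure on $\CD(\A)$ for connective $\A$, together with the identification of its heart with $\Mod H^0(\A)$. The plan is to reduce everything to properties of representable modules $X=\A(-,X)$, which for connective $\A$ satisfy
\[
H^i(\A(Y,X))=\Hom_{\CD(\A)}(Y,\Sigma^iX)=0 \qquad (i>0).
\]
The key identity is $H^i(M)(X)=\Hom_{\CD(\A)}(X,\Sigma^iM)$ for every $X\in\A$, which is the derived Yoneda lemma. It translates the cohomological vanishing conditions into orthogonality conditions against the shifted representables.

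\textbf{Orthogonality descriptions.} I first show that $\CD(\A)^{\geq0}$ is exactly the right orthogonal of the objects $\Sigma^{i}X$ with $i\geq1$, for $X\in\A$; this is immediate from the Yoneda identity. I then define $\CD(\A)^{\leq0}$ to be the smallest cocomplete pre-aisle generated by the representables $X\in\A$, that is, closed under $\Sigma$, extensions and coproducts.

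\textbf{Generated aisle.} By the general theorem of Alonso--Jeremías--Souto, a compactly generated pre-aisle is an aisle, so this yields a $t$-structure. It remains to identify the generated aisle with the cohomological description $\{M : H^i(M)=0 \text{ for } i>0\}$.
\begin{itemize}
\item[(a)] Connectivity gives the inclusion $\subseteq$. Each representable has $H^i=0$ for $i>0$, and this condition is stable under $\Sigma$, extensions and coproducts.
\item[(b)] The converse inclusion is the main obstacle. Given $M$ with $H^{>0}(M)=0$, I would build a cellular resolution $P_0\to P_1\to\cdots$. At each stage I attach coproducts of $\Sigma^{n}X$ with $n\geq0$, killing the cohomology in degree $-n$ and adding surjections onto $H^0$. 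Connectivity guarantees that the cells never create positive cohomology. The homotopy colimit then lies in the aisle and is quasi-isomorphic to $M$.
\end{itemize}
Alternatively, one can simply run the truncation: set $\tau_{\leq0}M$ to be the subcomplex given by the smart truncation. This is possible because connectivity lets us replace $\A$ by $\tau_{\leq0}\A$, and for a connective dg category the truncation $\tau_{\leq0}$ of a dg module is still a dg module, since $\A^{p}\cdot M^{q}\subseteq M^{p+q}$ with $p\leq0$. This gives the truncation triangles directly, and it is the route I would try first.

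\textbf{The heart.} For $M$ in the heart, $H^0(M)$ is a module over $H^0(\A)$, because the action of $Z^0\A$ on $H^0(M)$ factors through $H^0\A$. To show $H^0$ is fully faithful on the heart, take a projective resolution in $\Mod H^0(\A)$ of the form $\bigoplus X_j\to\cdots$, and compare it with the cellular resolution from (b) restricted to weight-$0$ cells: $\Hom(M,N)$ for $M,N$ in the heart reduces to $\Hom$ between $H^0$'s via this resolution.

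\textbf{Essential surjectivity.} Given an $H^0(\A)$-module $F$, take a presentation $\bigoplus Y_k\to\bigoplus X_j\to F\to0$. Form the cone in $\CD(\A)$ and apply $\tau_{\geq0}$; its $H^0$ is the cokernel, namely $F$. This realizes every module in the heart.
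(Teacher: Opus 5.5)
Your main route is correct, and it is essentially the standard argument behind the results the paper cites. The paper gives no proof of its own; it only refers to Chen and to Beligiannis--Reiten. In that argument, $\CD(\A)^{\leq0}$ is the aisle generated by the compact representables, connectivity gives the inclusion (a), Yoneda identifies the co-aisle cohomologically, and the heart is compared with $\Mod H^0(\A)$ through presentations by coproducts of representables. Three points need tightening.

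\textbf{The smart-truncation route.} The route you say you would try first does not by itself produce a $t$-structure. It does give truncation triangles, provided you also note that $(\tau_{\leq0}\A)^0=Z^0\A$ consists of cycles; that is what makes $Z^0M$ stable under the action. But the real content of the lemma is the vanishing $\Hom_{\CD(\A)}(U,N)=0$ whenever $H^{>0}(U)=0$ and $H^{\leq0}(N)=0$. Truncation triangles do not give this. The natural way to prove it is to show that such $U$ lie in the pre-aisle generated by the $\Sigma^nX$ with $n\geq0$, which is exactly your step (b). So that route still needs (b) or the generated aisle; do not drop them.

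\textbf{Step (b) is simpler than you propose.} Once the aisle $\mathcal{U}$ is available, no cellular construction is needed. Take the truncation triangle $U\to M\to N\to\Sigma U$ with $U\in\mathcal{U}$ and $N\in\mathcal{U}^{\perp}$. For $i\geq1$, the long exact sequence together with $H^{>0}(M)=0$ and (a) gives $H^i(N)=0$. For $i\leq0$, $H^i(N)=0$ by the Yoneda description of $\mathcal{U}^{\perp}$. Hence $N\cong0$ and $M\cong U\in\mathcal{U}$.

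\textbf{Full faithfulness on the heart.} Your sketch here should be made explicit. Let $M,N$ lie in the heart.
\begin{enumerate}
\item Choose $P_0=\bigoplus_j X_j\to M$ surjective on $H^0$. Its cocone $K$ lies in $\CD(\A)^{\leq0}$.
\item Since $\Hom(\Sigma K,N)=0$, you get $\Hom(M,N)=\ker\bigl(\Hom(P_0,N)\to\Hom(K,N)\bigr)$.
\item Apply the same step to a cover $P_1=\bigoplus_k Y_k\to K$, surjective on $H^0$. This shows $\Hom(K,N)\hookrightarrow\Hom(P_1,N)$.
\item The sequence $H^0(P_1)\to H^0(P_0)\to H^0(M)\to0$ is exact. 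So Yoneda identifies $\Hom(M,N)$ with $\Hom_{H^0(\A)}(H^0M,H^0N)$, and this identification is the map induced by $H^0$.
\end{enumerate}
Your essential-surjectivity argument, via the cone of a lifted free presentation followed by $\tau_{\geq0}$, is fine as written.
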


Throughout the rest of this paper, we often identify $\CH_\A$ with
$\Mod H^0(\A)$ via the above equivalence.

\begin{lemma}[{cf. \cite[Lemma~3.6]{Che24b}}]
\label{lem:defect_in_heart}
Let $\xi=(A\xto{f}B\xto{g}C,h)$ be a $3$-term complex in $\A$ and $M_\xi$ the totalization of $\xi$.
Then the following assertions hold:
\begin{enumerate}
\item 
$\xi$ is left exact if and only if $M_\xi$ belongs to the heart $\CH_\A$.
\item 
$\xi$ is right exact if and only if $\Sigma^2(M_\xi)^\vee$ belongs to the heart $\CH_{\A^\op}$.
\end{enumerate}
\end{lemma}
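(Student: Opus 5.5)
Both assertions reduce to the vanishing criteria of \cref{lem:defect_vanishing} combined with the description of the heart in \cref{lem:standard_t-str}. For part (2) we also transport along the duality of \cref{lem:duality_on_defects}.

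For (1), note first that for a representable $X\in\A$ we have $\Hom_{\CD(\A)}(X,\Sigma^iM_\xi)\cong H^i(M_\xi)(X)$ by the dg Yoneda lemma. Hence $M_\xi\in\CH_\A$, i.e.\ $H^i(M_\xi)=0$ for all $i\neq0$, holds if and only if $\Hom_{\CD(\A)}(X,\Sigma^iM_\xi)=0$ for every $X\in\A$ and every $i\neq0$. By \cref{lem:defect_vanishing}(1), this is exactly left exactness of $\xi$. This part is immediate once the Yoneda identification is written down.

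For (2), I would reduce to (1) applied to $\xi^{\op}$ in the connective dg category $\A^{\op}$. Step one is to show that $\xi$ is right exact if and only if $\xi^{\op}$ is left exact. Two routes are available:
\begin{itemize}
\item Compare \cref{def:short_exact_sequence} directly: the defining condition via $\psi_\xi$ on $\Hom(-,\Sigma^iX)$ is the defining condition via $\varphi_{\xi^{\op}}$ for $\A^{\op}$, after identifying $U_\xi^\vee$ with $V_{\xi^{\op}}$ up to shift.
\item More cleanly, use the vanishing criteria together with duality. By the isomorphisms in the proof of \cref{lem:shifted_duality_on_C2}, $\Hom_{\CD(\A^{\op})}(X,\Sigma^i(\Sigma^2M_\xi^\vee))\cong\Hom_{\CD(\A)}(M_\xi,\Sigma^{i+2}X)$. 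Thus the condition ``$\Hom(M_\xi,\Sigma^jX)=0$ for $j\neq2$'' is equivalent to ``$\Hom(X,\Sigma^i\Sigma^2M_\xi^\vee)=0$ for $i\neq0$''.
\end{itemize}
By the Yoneda argument of part (1), now applied over $\A^{\op}$, the latter condition says precisely that $\Sigma^2(M_\xi)^\vee\in\CH_{\A^{\op}}$. Combined with \cref{lem:defect_vanishing}(2), this gives (2). Via \cref{lem:duality_on_defects} the same statement can be phrased as $M_{\xi^{\op}}\in\CH_{\A^{\op}}$, equivalently left exactness of $\xi^{\op}$, but this reformulation is not needed.

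The main point requiring care is the duality isomorphism $\Hom_{\CD(\A^{\op})}(X,\Sigma^iN^\vee)\cong\Hom_{\CD(\A)}(N,\Sigma^{i}X)$ for $N\in\pretr(\A)$ and representable $X$. I would justify it by noting that $N^\vee(X)=\pretr(\A)(N,X)$ as complexes, so that $H^i(N^\vee)(X)=H^i\pretr(\A)(N,X)=\Hom_{\tr(\A)}(N,\Sigma^iX)$. Here we use that $\tr(\A)\to\CD(\A)$ is fully faithful on perfect objects, which is needed to express the hom groups in $\CD(\A)$. Beyond this, only the shift bookkeeping for $\Sigma^2$ needs checking.
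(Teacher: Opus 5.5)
Your proposal is correct and follows the paper's proof: part (1) is the Yoneda identification $H^i(M_\xi)(X)\cong\Hom_{\CD(\A)}(X,\Sigma^iM_\xi)$ combined with \cref{lem:defect_vanishing}(1). Part (2) is the duality isomorphism $H^i(\Sigma^2M_\xi^\vee)(X)\cong\Hom_{\CD(\A)}(M_\xi,\Sigma^{i+2}X)$ combined with \cref{lem:defect_vanishing}(2), and, as you note yourself, the detour through $\xi^{\op}$ and \cref{lem:duality_on_defects} is not needed.
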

\begin{proof}
(1)
We have an isomorphism
\[
H^i(M_\xi)(X)\cong \Hom_{\CD(\A)}(X,\Sigma^iM_\xi)
\]
for any $i\in\mathbb{Z}$ and $X\in\A$.
By \cref{lem:defect_vanishing}, the assertion follows.

(2)
Similarly, the shifted dg $\A$-duality yields
\begin{align*}
H^i(\Sigma^2M_\xi^\vee)(X)
&\cong \Hom_{\CD(\A^{\op})}(X,\Sigma^i\Sigma^2M_\xi^\vee) \\
&\cong \Hom_{\CD(\A)}(M_\xi,\Sigma^{i+2}X)
\end{align*}
for any $i\in\mathbb{Z}$ and $X\in\A$.
Thus, $\Sigma^2M_\xi^\vee$ belongs to $\CH_{\A^{\op}}$ precisely
when the last group vanishes for every $i\neq0$.
\end{proof}

For later use, we record the following refinement of \cref{lem:weight_amplitude}.
It allows us to prescribe the last morphism of the resulting $3$-term complex.

\begin{lemma}\label{lem:realization_of_projective_presentation}
Let $M\in\CC_2(\A)$. Every projective presentation
\[
H^0(B)
\xrightarrow{H^0(g)}
H^0(C)
\xrightarrow{p}
M
\longrightarrow0
\]
in $\Mod H^0(\A)$, where $g\colon B\to C$ belongs to $Z^0\A$,
is induced by a short exact $3$-term complex
\[
\xi=(A\xto{f}B\xto{g}C,h).
\]
More precisely, there exists an isomorphism
$
\alpha\colon M_\xi\xrightarrow{\sim}M
$
in $\tr(\A)$ such that
$
H^0(\alpha)\circ p_\xi=p,
$
where $p_\xi\colon H^0(C)\to M_\xi$ is the canonical epimorphism
associated with $\xi$.
\end{lemma}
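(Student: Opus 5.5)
The plan is to lift the given presentation to a morphism in $\tr(\A)$ and then run the cone argument from the proof of \cref{lem:weight_amplitude}, this time with $g$ fixed in advance. Since $M\in\CC_2(\A)$, $M$ lies in the heart $\CH_\A$ (the first vanishing condition says $H^i(M)=0$ for $i\neq0$), and $H^0$ identifies $\CH_\A$ with $\Mod H^0(\A)$ by \cref{lem:standard_t-str}. For a representable $C$, Yoneda gives
\[
\Hom_{\CD(\A)}(C,M)\cong H^0(M)(C)\cong\Hom_{\Mod H^0(\A)}(H^0(C),H^0(M)).
\]
So $p$ lifts uniquely to a morphism $\tilde p\colon C\to M$ in $\tr(\A)$ with $H^0(\tilde p)=p$.

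Next complete $\tilde p$ to a triangle
\[
N\xrightarrow{u}C\xrightarrow{\tilde p}M\to\Sigma N.
\]
We then analyse $N$ by weights and cohomology.
\begin{itemize}
\item By \cref{lem:weight_amplitude}, $M\in\tr(\A)^{\leq0}\cap\tr(\A)^{\geq-2}$. Since $C$ lies in the weight heart, $N$ has weights in $[-1,0]$.
\item Taking cohomology, $H^i(N)=0$ for $i>0$ because $p$ is surjective. Moreover $H^0(N)\cong\ker p=\Im H^0(g)$.
\item Since $pH^0(g)=0$, Yoneda gives $\tilde p\circ g=0$. Hence $g$ factors as $g=u\circ v$ for some $v\colon B\to N$, and $H^0(v)\colon H^0(B)\to H^0(N)$ is surjective.
\end{itemize}

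Complete $v$ to a triangle $\Sigma^{-1}W\to B\xrightarrow{v}N\to W$. Here $W$ should lie in $\Sigma H^0(\A)$, i.e.\ $W\cong\Sigma A$ for some $A\in\A$. This is the main obstacle: in the weight Postnikov decomposition of $N$, the weight-$0$ part must be replaceable by the prescribed $B$.

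To establish it, I would argue that $\Sigma^{-1}W$ has weights in $[-1,0]$ and that $\Hom(X,\Sigma^{-1}W)=0$ for all $X\in\A$. The vanishing comes from the long exact sequence: $H^0(v)$ is surjective, $H^{-1}(B)=0$, and $H^{>0}$ of both $B$ and $N$ vanishes. Combined with the description of $\tr(\A)^{\geq0}$ and $\tr(\A)^{\leq0}$ via orthogonality, this should force $\Sigma^{-1}W\in\tr(\A)^{\geq-1}\cap\tr(\A)^{\leq-1}$. That intersection is $\Sigma H^0(\A)$ by idempotent completeness (\cref{lem:weight_structure}).

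If the weight bookkeeping fails, the fallback is more concrete. Choose a weight decomposition of $N$, namely a triangle $B'\to N\to\Sigma A'$ with $A',B'\in H^0(\A)$ and $H^0(B')\to H^0(N)$ an epimorphism. Then replace $B'$ by $B$ using the projectivity of representables in the heart, adding a trivial summand $B\oplus B'$ and cancelling with idempotent completeness.

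Once $N\cong\cone(f)$ for some closed $f\colon A\to B$, compatibly with $v$, the morphism $u\colon\cone(f)\to C$ restricts on $B$ to $g$. Writing $u=[-h\ \ g]$ exactly as in \cref{lem:weight_amplitude} produces a $3$-term complex $\xi=(A\xto{f}B\xto{g}C,h)$ with $M_\xi=\cone(u)$. Then $M_\xi\cong M$ via some $\alpha$ compatible with $\tilde p$, which gives $H^0(\alpha)\circ p_\xi=p$.

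Short exactness follows from $M_\xi\cong M\in\CC_2(\A)$ by \cref{lem:defect_vanishing}. The representative $g$ is preserved exactly, not merely up to homotopy: any homotopy between $u|_B$ and $g$ can be absorbed by adjusting $h$.
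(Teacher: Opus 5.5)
\emph{Verdict: same route as the paper, but the step that identifies the fibre of $v$ as an object of $\A$ is off by one shift as written and needs the correction below.}

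Your route is the paper's. Your $N$, $u$, $v$, $\Sigma^{-1}W$ are its $V$, $q$, $b$, $A$. The paper also lifts $p$, factors $g$ through the fibre of $p$, takes the fibre of that lift, and uses the weight structure plus idempotent completeness to land in $H^0(\A)$.

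The problems in the decisive paragraph are these:
\begin{itemize}
\item The claim $\Hom(X,\Sigma^{-1}W)=0$ is false in general. The long exact sequence shows that $\Hom(X,\Sigma^{-1}W)$ surjects onto $\ker\bigl(\Hom(X,B)\to\Hom(X,N)\bigr)$. Once $\Sigma^{-1}W\cong A$, it equals $\Hom(X,A)$, which is nonzero for $X=A\neq0$.
\item Your conclusion $\Sigma^{-1}W\in\tr(\A)^{\le-1}\cap\tr(\A)^{\ge-1}=\Sigma H^0(\A)$ would give $W\cong\Sigma^2A$, which contradicts your own target $W\cong\Sigma A$.
\item $H^{-1}(B)$ need not vanish for a connective $\A$, since connectivity only kills positive cohomology.
\item The weights: $\Sigma^{-1}W$ a priori has weights in $[0,1]$, not $[-1,0]$.
\end{itemize}

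The fix uses exactly your ingredients, applied to $W$ rather than $\Sigma^{-1}W$.
\begin{itemize}
\item From the triangle $N\to W\to\Sigma B\to\Sigma N$, with $N\in\tr(\A)^{\le0}\cap\tr(\A)^{\ge-1}$ and $\Sigma B\in\tr(\A)^{\le-1}\cap\tr(\A)^{\ge-1}$, you get $W\in\tr(\A)^{\le0}\cap\tr(\A)^{\ge-1}$.
\item The exact sequence $\Hom(X,B)\twoheadrightarrow\Hom(X,N)\to\Hom(X,W)\to\Hom(X,\Sigma B)=0$ gives $\Hom(X,W)=0$. This needs only $\Hom(X,\Sigma B)=0$, not $H^{-1}(B)=0$.
\item Hence $\Hom(X,\Sigma^jW)=0$ for all $j\ge0$, so $W\in\tr(\A)^{\le-1}$.
\item Therefore $W\in\Sigma\add H^0(\A)=\Sigma H^0(\A)$. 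This is precisely the paper's step ``surjectivity implies $A\in\tr(\A)^{\le0}\cap\tr(\A)^{\ge0}$''.
\end{itemize}

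Your first bullet is also slightly under-justified. The triangle $\Sigma^{-1}M\to N\to C$ alone only gives $N\in\tr(\A)^{\le1}$, because $\Sigma^{-1}M\in\tr(\A)^{\le1}$. The bound $N\in\tr(\A)^{\le0}$ needs $H^1(N)=\Cok\bigl(\Hom(X,C)\xrightarrow{p_*}\Hom(X,M)\bigr)=0$, i.e.\ the surjectivity of $p$ from your second bullet.

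With these corrections the fallback is unnecessary. The remaining steps are fine and somewhat more explicit than the paper: absorbing a homotopy $g\simeq u|_B$ into $h$ via $h\mapsto h-kf$, and obtaining $\alpha$ by completing the morphism of triangles.
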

\begin{proof}
We follow the proof of \cref{lem:weight_amplitude}, keeping track of the prescribed projective presentation.
Regard $p$ as a morphism $p\colon C\to M$ in $\tr(\A)$ and complete
it to a triangle
\[
V\xrightarrow{q}C\xrightarrow{p}M\longrightarrow\Sigma V.
\]
Since $pg=0$, the morphism $g$ lifts to a morphism
$b\colon B\to V$ such that $qb=g$.

For every $X\in\A$, the vanishing $
\Hom_{\tr(\A)}(X,\Sigma^{-1}M)=0
$ shows that the induced morphism
\[
\Hom_{\tr(\A)}(X,V)\overset{q\circ -}{\lra}
\Hom_{\tr(\A)}(X,C)
\]
is injective.
Moreover, the exactness of the prescribed projective
presentation shows that
\[
\Hom_{\tr(\A)}(X,B)\overset{b\circ -}{\lra}
\Hom_{\tr(\A)}(X,V)
\]
is surjective.
Complete $b$ to a triangle
\[
A\xto{f}B\xto{b}V\longrightarrow\Sigma A.
\]
As in the proof of \cref{lem:weight_amplitude}, we have
$V\in\tr(\A)^{\leq0}\cap\tr(\A)^{\geq-1}$.
The above surjectivity then implies
\[
A\in\tr(\A)^{\leq0}\cap\tr(\A)^{\geq0}.
\]
Since $\A$ is idempotent complete, we may regard $A$ as an object of
$\A$.

Identifying $V$ with $\cone(f)$ and choosing representatives, we may
write
\[
q=\begin{bmatrix}-h&g\end{bmatrix}
\colon\cone(f)\longrightarrow C.
\]
This gives a $3$-term complex
$
\xi=(A\xto{f}B\xto{g}C,h)
$
whose totalization is isomorphic to $M$, compatibly with $p$.
Finally, \cref{lem:defect_vanishing} shows that $\xi$ is short exact.
\end{proof}

\begin{definition}\label{def:total_saturated}
A full subcategory $\CS$ of $\CS_{\all}$ is called
\emph{totalization-saturated} if, for any
$\xi,\eta\in\CS_{\all}$ such that
$
M_\xi\cong M_\eta
\quad\text{in }\tr(\A),
$
we have
\[
\xi\in\CS
\quad\Longleftrightarrow\quad
\eta\in\CS.
\]
\end{definition}

\begin{lemma}
Regarded as a full subcategory of $\cpx^3(\A)$, the category
$\CS_{\all}$ is additive and closed under direct summands in
$\cpx^3(\A)$.
\end{lemma}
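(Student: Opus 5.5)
The plan is to work in $\cpx^3(\A)$, the $H^0$-category of $\cpx^3_{\dg}(\A)$, which is additive: direct sums of $3$-term complexes are taken componentwise. Closedness under summands is to be read up to isomorphism, since $\CS_{\all}$ is isomorphism-closed. For the additive part, I will take two short exact sequences $\xi,\eta$. The sum $\xi\oplus\eta$ is again a $3$-term complex, and I claim its totalization satisfies $M_{\xi\oplus\eta}\cong M_\xi\oplus M_\eta$ in $\tr(\A)$. This should follow from the dg functor $\Tot_\A$ of \eqref{diag:dg-totalization}: a dg functor between additive dg categories preserves finite direct sums up to homotopy equivalence, because biproducts are characterized by closed degree-zero morphisms and identities. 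One can also read it off directly from the explicit matrix description of $M_\xi$ in the proof of \cref{lem:duality_on_defects}. The zero complex has totalization $0$. Since the vanishing conditions defining $\CC_2(\A)$ in \cref{def:C2} are stable under finite direct sums, we get $M_{\xi\oplus\eta}\in\CC_2(\A)$. Then \eqref{diag:short_exact_C2} gives $\xi\oplus\eta\in\CS_{\all}$.

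For closure under direct summands, suppose $\xi\oplus\eta\cong\zeta$ in $\cpx^3(\A)$ with $\zeta\in\CS_{\all}$. Applying the functor $\Tot_\A$ of \cref{cor:totalization_functor} gives $M_\xi\oplus M_\eta\cong M_\zeta\in\CC_2(\A)$, so $M_\xi$ is a direct summand of $M_\zeta$ in $\tr(\A)$. The Hom-vanishing conditions $\Hom(X,\Sigma^iM)=0$ for $i\neq0$ and $\Hom(M,\Sigma^iX)=0$ for $i\neq2$ pass to direct summands, because Hom functors are additive. Hence $M_\xi\in\CC_2(\A)$, and \eqref{diag:short_exact_C2} again gives $\xi\in\CS_{\all}$.

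The remaining subtlety, and the step I expect to be the main obstacle, is the meaning of ``direct summand in $\cpx^3(\A)$''. If it means a retract, i.e.\ an idempotent $e$ on $\zeta$ in $\cpx^3(\A)$ that splits with image $\xi$, then $\xi\oplus\eta\cong\zeta$ only when the complementary idempotent also splits. I would avoid this issue entirely. Given $\xi\xrightarrow{i}\zeta\xrightarrow{p}\xi$ with $pi=\id$, applying $\Tot_\A$ shows that $M_\xi$ is a retract of $M_\zeta$ in $\tr(\A)$. Retracts preserve the Hom-vanishing conditions just as direct summands do, so $M_\xi\in\CC_2(\A)$ and $\xi\in\CS_{\all}$ by \eqref{diag:short_exact_C2}. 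With this formulation the argument needs neither idempotent completeness of $\cpx^3(\A)$ nor \cref{lem:isomorphisms_in_cpx3}.

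The only genuine computation left is checking that $\Tot_\A$ sends the componentwise sum to the sum of totalizations. The matrix formula for $M_\xi$ makes this evident: the differential of $M_{\xi\oplus\eta}$ is block-diagonal after reordering the summands.
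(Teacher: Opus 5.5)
Your proposal is correct and follows essentially the same route as the paper: additivity of $\Tot_\A$ gives $M_{\xi\oplus\eta}\cong M_\xi\oplus M_\eta$, the vanishing conditions defining $\CC_2(\A)$ are stable under finite direct sums and direct summands, and the characterization $\xi\in\CS_{\all}\Leftrightarrow M_\xi\in\CC_2(\A)$ transfers this back to $\CS_{\all}$. Your extra treatment of retracts, where the complementary idempotent need not split, is a harmless strengthening that the paper does not spell out.
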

\begin{proof}
The totalization functor is additive, and hence
$
M_{\xi\oplus\eta}\cong M_\xi\oplus M_\eta
$
for all $\xi,\eta\in\cpx^3(\A)$.

Since $M_0=0$ belongs to $\CC_2(\A)$, the zero $3$-term complex
belongs to $\CS_{\all}$. If $\xi,\eta\in\CS_{\all}$, then
$M_\xi,M_\eta\in\CC_2(\A)$. Since $\CC_2(\A)$ is additive, we have
$
M_\xi\oplus M_\eta
\in\CC_2(\A).
$
Thus, $\xi\oplus\eta\in\CS_{\all}$.

Finally, suppose that $\xi\oplus\eta\in\CS_{\all}$. Then
$
M_{\xi\oplus\eta}
\cong
M_\xi\oplus M_\eta
\in\CC_2(\A)$.
Since $\CC_2(\A)$ is closed under direct summands by definition, both
$M_\xi$ and $M_\eta$ belong to $\CC_2(\A)$. Therefore,
$\xi,\eta\in\CS_{\all}$.
\end{proof}

\begin{corollary}\label{cor:correspondence_defect_subcategories}
The assignments
\[
\CM\longmapsto\CS_\CM,
\qquad
\CS\longmapsto\CM_\CS
\]
induce mutually inverse bijections between the following classes:
\begin{enumerate}
\item
Full subcategories of $\CS_{\all}$ which are
totalization-saturated, additive, and closed under direct summands
in $\cpx^3(\A)$;
\item
Full additive subcategories of $\CC_2(\A)$ which are closed under direct summands in $\tr(\A)$.
\end{enumerate}
\end{corollary}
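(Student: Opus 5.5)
We verify that the two assignments $\mathsf{M}$ and $\mathsf{S}$ map each class into the other, and then that the two composites are the identity.

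\emph{Step 1: $\CM_\CS$ lies in class (2).} Let $\CS$ be in class (1). By \cref{lem:defect_vanishing}, $\CM_\CS\sse\CC_2(\A)$.
\begin{itemize}
\item \emph{Additivity.} Take $M\cong M_\xi$ and $N\cong M_\eta$ with $\xi,\eta\in\CS$. Then $M\oplus N\cong M_{\xi\oplus\eta}$ by additivity of $\Tot_\A$ (\cref{cor:totalization_functor}). Since $\CS$ is additive, $\xi\oplus\eta\in\CS$, so $M\oplus N\in\CM_\CS$. Also $0=M_0$ with $0\in\CS$.
\item \emph{Closure under summands.} This is the main point. Suppose $M_\xi\cong N\oplus N'$ with $\xi\in\CS$. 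Both $N$ and $N'$ lie in $\CC_2(\A)$, since the vanishing conditions pass to summands. By \cref{cor:represents_obj_of_C2}, choose short exact sequences $\eta,\eta'$ with $M_\eta\cong N$ and $M_{\eta'}\cong N'$. Then $M_{\eta\oplus\eta'}\cong M_\xi$, so totalization-saturation gives $\eta\oplus\eta'\in\CS$. Closure of $\CS$ under direct summands in $\cpx^3(\A)$ then yields $\eta\in\CS$, hence $N\in\CM_\CS$.
\end{itemize}

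\emph{Step 2: $\CS_\CM$ lies in class (1).} Let $\CM$ be in class (2). By \cref{lem:defect_vanishing}, $\CS_\CM\sse\CS_{\all}$.
\begin{itemize}
\item \emph{Totalization-saturation.} This is immediate, because membership in $\CS_\CM$ depends only on the isomorphism class of $M_\xi$, and $\CM$ is isomorphism-closed.
\item \emph{Additivity.} This follows from $M_{\xi\oplus\eta}\cong M_\xi\oplus M_\eta$ and $M_0=0\in\CM$.
\item \emph{Closure under summands in $\cpx^3(\A)$.} Suppose $\xi\oplus\eta\in\CS_\CM$. Then $M_\xi$ is a direct summand of $M_{\xi\oplus\eta}\in\CM$, so $M_\xi\in\CM$ because $\CM$ is closed under summands. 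Isomorphism-closedness of $\CS_\CM$ in $\cpx^3(\A)$ also follows from \cref{cor:totalization_functor}.
\end{itemize}

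\emph{Step 3: the composites are identities.}
\begin{itemize}
\item \emph{$\CM_{\CS_\CM}=\CM$.} The inclusion $\sse$ holds by definition. Conversely, for $M\in\CM\sse\CC_2(\A)$, \cref{cor:represents_obj_of_C2} gives a $\xi$ with $M_\xi\cong M$. Then $\xi\in\CS_\CM$, so $M\in\CM_{\CS_\CM}$.
\item \emph{$\CS_{\CM_\CS}=\CS$.} The inclusion $\supseteq$ is clear. For $\sse$, let $\eta\in\CS_{\CM_\CS}$. Then $M_\eta\cong M_\xi$ for some $\xi\in\CS$, and $\eta\in\CS_{\all}$, so totalization-saturation gives $\eta\in\CS$.
\end{itemize}

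The only step that is not purely formal is the summand-closure in Step 1. It is the one place where idempotent completeness enters, through \cref{cor:represents_obj_of_C2}, which realizes arbitrary summands as totalizations.
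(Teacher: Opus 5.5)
Your proof is correct and follows the paper's argument essentially step for step. The key non-formal point is the same as in the paper: summands of $M_\xi$ are realized via \cref{cor:represents_obj_of_C2} as $M_\eta, M_{\eta'}$, and then totalization-saturation and summand-closure of $\CS$ are applied to $\eta\oplus\eta'$.

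One small correction to your closing remark: the inclusion $\CM\sse\CM_{\CS_\CM}$ in Step 3 also uses \cref{cor:represents_obj_of_C2}. So idempotent completeness enters there as well, not only in Step 1.
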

\begin{proof}
Let $\CM\sse\CC_2(\A)$ belong to the class \textup{(2)}.
Since
$
M_{\xi\oplus\eta}\cong M_\xi\oplus M_\eta,
$
the additivity and closure under direct summands of $\CM$ imply that $\CS_\CM$ is additive and closed under direct summands in $\cpx^3(\A)$.
Moreover, $\CS_\CM$ is totalization-saturated by construction.
Thus $\CS_\CM$ belongs to the class \textup{(1)}.

Now let $\CS$ belong to the class \textup{(1)}.
It is clear that $\CM_\CS$ is additive.
Let $M\in\CM_\CS$ and suppose that
$
M\cong N\oplus N'
$
in $\tr(\A)$. Since $\CC_2(\A)$ is closed under direct summands,
we have $N,N'\in\CC_2(\A)$. By \cref{cor:represents_obj_of_C2}, there exist
$\eta,\eta'\in\CS_{\all}$ such that
\[
M_\eta\cong N
\qquad\text{and}\qquad
M_{\eta'}\cong N'.
\]
Choose $\xi\in\CS$ such that $M\cong M_\xi$. Then
$
M_\xi\cong M_{\eta\oplus\eta'}
$.
Since $\CS$ is totalization-saturated, we have
$\eta\oplus\eta'\in\CS$.
Since $\CS$ is closed under direct summands, it follows that
$\eta,\eta'\in\CS$, and hence $N,N'\in\CM_\CS$.
Thus $\CM_\CS$ belongs to the class \textup{(2)}.

For $\CM$ in \textup{(2)}, we have
$
\CM_{\CS_\CM}=\CM
$
by \cref{cor:represents_obj_of_C2} and the definitions.
On the other hand, for $\CS$ in \textup{(1)}, the fact that $\CS$
is totalization-saturated yields $\CS_{\CM_\CS}=\CS$.
Therefore the two assignments are mutually inverse.
\end{proof}

We show that every morphism in $\CC_2(\A)$ is represented by a morphism of short exact sequences.

\begin{corollary}\label{cor:represents_morph_of_C2}
Let $\phi\colon M\to M'$ be a morphism in $\CC_2(\A)$.
Then there exists a morphism $\Xi\colon \xi\to\xi'$ in $\cpx^3(\A)$ such that $\Tot_\A(\Xi)\cong \phi$.
\end{corollary}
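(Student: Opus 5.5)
Fix a short exact sequence $\xi=(A\xto{f}B\xto{g}C,h)$ with $M_\xi\cong M$ by \cref{cor:represents_obj_of_C2}; composing $\phi$ with this isomorphism, we may assume $M=M_\xi$. Write $p_\xi\colon C\to M_\xi$ for the canonical morphism from the triangle \eqref{tri:totalization_right}. The plan is to build the target sequence $\xi'$ through \cref{lem:realization_of_projective_presentation}, chosen so that $\phi p_\xi$ lifts through it, and then to lift $\phi$ degree by degree.

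First we build $\xi'$ and the lift $r_2$. Choose any short exact $\eta=(A_0\to B_0\xto{g_0}C_0,h_0)$ with $M_\eta\cong M'$. By \cref{lem:defect_in_heart}, $M'$ lies in the heart, so $H^0(C_0)\to M'$ is an epimorphism in $\Mod H^0(\A)$. Next form the epimorphism $H^0(C_0\oplus C)\to M'$ given by $(p_\eta,\phi p_\xi)$. Its kernel is finitely generated: it is generated by the image of $B_0$ together with the graph of $C\to C_0$, which comes from lifting $\phi p_\xi$ by projectivity of $H^0(C)$. This gives a projective presentation $H^0(B')\xto{g'}H^0(C')\to M'\to0$ with $C'=C_0\oplus C$, and we lift $g'$ to $Z^0\A$. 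Applying \cref{lem:realization_of_projective_presentation} yields a short exact $\xi'=(A'\xto{f'}B'\xto{g'}C',h')$ with $M_{\xi'}\cong M'$ compatibly with $p$. The inclusion $r_2\colon C\to C'$ then satisfies $p_{\xi'}r_2=\phi p_\xi$ in $\tr(\A)$.

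Next we lift the remaining components. Since $p_{\xi'}r_2g=\phi p_\xi g=0$, the map $r_2g$ factors through $\psi_{\xi'}\colon U_{\xi'}\to C'$ by \eqref{tri:totalization_right}. A map $B\to U_{\xi'}=\cone(f')$ in $H^0$ lifts along $B'\to\cone(f')$, because the cokernel contribution $\Hom(B,\Sigma A')$ vanishes by connectivity. This produces $r_1$ and the homotopy $s_2$. Similarly, the induced map $A\to\cone(g'r_1\ldots)$, equivalently $A\to V_{\xi'}$, lifts along $\varphi_{\xi'}$ by left exactness of $\xi'$ (\cref{def:short_exact_sequence} with $i=0$), giving $r_0$, $s_1$ and $t$. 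The cleanest bookkeeping is the octahedral and cone functoriality in $\pretr(\A)$: a morphism $\Xi=(r_0,r_1,r_2,s_1,s_2,t)$ is the same as a closed morphism of the iterated cones of \S\ref{subsec:totalization_dg}. So it suffices to produce a closed morphism of the underlying squares and to check, via \cref{lem:defect_vanishing}, that each obstruction group vanishes.

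The main obstacle is the last point: showing that the induced $\Tot_\A(\Xi)\colon M_\xi\to M_{\xi'}$ actually equals $\phi$, rather than merely agreeing with it after precomposition with $p_\xi$. By the triangle \eqref{tri:totalization_right}, the difference $\Tot_\A(\Xi)-\phi$ factors through $\Sigma U_\xi\to M_{\xi'}$. The group $\Hom(\Sigma U_\xi,M_{\xi'})$ is built from $\Hom(\Sigma^2A,M')$ and $\Hom(\Sigma B,M')$, and both vanish by \cref{lem:defect_vanishing}(1). The condition needed is vanishing for $i\neq0$, which is satisfied since $i=-1,-2$ here. Hence $\Tot_\A(\Xi)=\phi$ once $p_{\xi'}r_2=\phi p_\xi$, and the main check is that the identification of $\Tot_\A(\Xi)$ restricted along $p_\xi$ with $p_{\xi'}r_2$ holds by construction of the cone functor.
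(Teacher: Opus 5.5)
Your proof is correct and follows essentially the paper's route: lift $\phi$ along the projective presentations coming from $H^0(B)\to H^0(C)\to M_\xi\to 0$, extend the resulting homotopy-commutative square to a morphism of $3$-term complexes using left exactness of $\xi'$ (the paper cites Chen's Lemma~3.29 for this step), and conclude $\Tot_\A(\Xi)=\phi$ from the heart, where your vanishing of $\Hom(\Sigma U_\xi,M_{\xi'})$ is the same fact behind the paper's use of the epimorphism $H^0(C)\to M_\xi$ and the fully faithful heart. The re-choice of $\xi'$ with $C'=C_0\oplus C$ via \cref{lem:realization_of_projective_presentation} is superfluous: the lift $C\to C_0$ of $\phi p_\xi$, which you already obtain from projectivity of $H^0(C)$, can serve directly as $r_2$ with $\xi'=\eta$.
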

\begin{proof}
By \cref{cor:represents_obj_of_C2}, there exist short exact sequences
\[
\xi=(A\xto{f}B\xto{g}C,h)
\qquad
\text{and}
\qquad
\xi'=(A'\xto{f'}B'\xto{g'}C',h')
\]
such that $M\cong M_\xi$ and $M'\cong M_{\xi'}$ in $\CC_2(\A)$.
In turn we may replace $M,M'$ with the isomorphic objects $M_{\xi},M_{\xi'}$ respectively.
We have seen in \cref{lem:defect_in_heart} that $\CC_2(\A)$ is contained in the heart $\CH_\A=\CD(\A)^{\leq 0}\cap \CD(\A)^{\geq 0}$.
Via the equivalence
\[
H^0\colon\CH_\A\xrightarrow{\sim}\Mod H^0(\A)
\]
of \cref{lem:standard_t-str}, we regard $M_\xi$ and $M_{\xi'}$ as
$H^0(\A)$-modules.
Thus we deduce from the defining diagram \eqref{diag:defect} of $M_\xi$ the triangle in $\tr(\A)$,
\[
\begin{tikzcd}
U_{\xi}\arrow{r}{\begin{bsmallmatrix}
    -h, \amph g
\end{bsmallmatrix}}
&C\arrow{r}
&M_{\xi}\arrow{r}
&\Sigma U_{\xi}
\end{tikzcd}
\]
where $\cone(f)=U_{\xi}$.
By connectivity, we have projective presentations of $M_{\xi}$ and $M_{\xi'}$, see \cite[Remark~3.11]{Che24b} for the details.
We have thus obtained the following commutative diagram in $\Mod H^0(\A)$:
\[
\begin{tikzcd}
H^0(B)\arrow{r}{H^0(g)}\arrow{d}{}\arrow[mysymbol]{rd}[description]{\circlearrowleft}
&H^0(C)\arrow{r}\arrow{d}{}
&M_\xi\arrow{r}\arrow{d}{\phi}
&0
\\
H^0(B')\arrow{r}[swap]{H^0(g')}
&H^0(C')\arrow{r}
&M_{\xi'}\arrow{r}
&0
\end{tikzcd}
\]
By choosing closed degree-zero representatives of the unlabeled vertical arrows, we have the following closed degree-zero morphism from $g$ to $g'$ in $\Mor(\A)$:
\[
\begin{tikzcd}
B\arrow{r}{g}\arrow{d}[swap]{r_1} \arrow{rd}[red]{s_2}&C\arrow{d}{r_2}
\\
B'\arrow{r}{g'} &C'
\end{tikzcd}
\]
which corresponds to the $\circlearrowleft$-labeled square.
Since $\xi'$ is left exact,
\cite[Lemma~3.29]{Che24a} shows that this morphism extends to a
morphism
\[
\Xi=(r_0,r_1,r_2,s_1,s_2,t)
\colon
\xi\to\xi'
\]
in $\cpx^3(\A)$.
By construction, the morphism on cokernels induced by $\Tot_\A(\Xi)$ is precisely $\phi$.
Equivalently,
\[
H^0\bigl(\Tot_\A(\Xi)\bigr)=\phi.
\]
Since the heart $\CH_\A$ is fully faithfully embedded in $\CD(\A)$, we conclude that $\Tot_\A(\Xi)=\phi$ in $\tr(\A)$.
\end{proof}

We conclude this subsection with a characterization of stable dg categories in terms of $\CC_2(\A)$ and the shifted dg duality.

\begin{definition}\label{def:stable_dg_category}
\cite[Definition~6.1]{Che24a}
A connective dg category $\A$ is said to be \emph{stable} if the
following conditions hold:
\begin{enumerate}
\item
The dg category $\A$ admits homotopy kernels and homotopy
cokernels.
\item
A $3$-term complex in $\A$ is left exact if and only if it is
right exact.
\end{enumerate}
\end{definition}

Let $\A$ be a stable dg category.
By \cite[Proposition~3.28]{Che24b}, $\A$ is quasi-equivalent to the connective cover of a pretriangulated dg category.
By \cite[Proposition~6.4]{Che24a}, the class $\CS_{\all}$ defines an exact dg structure on $\A$.

\begin{proposition}\label{prop:shifted_duality_stable}
Let $\A$ be an idempotent complete connective additive dg category.
Then the following conditions are equivalent:
\begin{enumerate}
\item
$\A$ is a stable dg category.
\item
The zeroth cohomology functor induces an equivalence
$
H^0\colon
\CC_2(\A)\xrightarrow{\sim}\mod H^0(\A),
$
and the shifted dg $\A$-duality induces a duality
\begin{equation}\label{equiv:shifted_duality_stable}
\Sigma^2(-)^\vee\colon
\bigl(\mod H^0(\A)\bigr)^{\op}
\xrightarrow{\sim}
\mod H^0(\A^{\op}).
\end{equation}
\end{enumerate}
\end{proposition}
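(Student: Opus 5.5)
We prove both implications, using the heart embedding $\CC_2(\A)\sse\CH_\A\simeq\Mod H^0(\A)$ from \cref{lem:defect_in_heart} and \cref{lem:standard_t-str}, together with the duality of \cref{lem:shifted_duality_on_C2}.

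\emph{Step 1: $H^0$ lands in $\mod H^0(\A)$ and is fully faithful, with no stability assumption.} For $M\in\CC_2(\A)$, \cref{lem:weight_amplitude} gives $M\cong M_\xi$. The triangle $U_\xi\to C\to M_\xi\to\Sigma U_\xi$ then yields a presentation $H^0(B)\to H^0(C)\to H^0(M)\to 0$, so $H^0(M)$ is finitely presented. Full faithfulness holds because the heart is a full subcategory of $\CD(\A)$. Hence in (2) the real content is essential surjectivity onto $\mod H^0(\A)$, together with the fact that $\Sigma^2(-)^\vee$ preserves these images.

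\emph{Step 2: (1)$\Rightarrow$(2).} Let $N\in\mod H^0(\A)$ with presentation $H^0(B)\xrightarrow{H^0(g)}H^0(C)\to N\to 0$, and choose $g\in Z^0\A$ representing it.
\begin{itemize}
\item Since $\A$ is stable, $g$ has a homotopy kernel $f\colon A\to B$ with a homotopy $h$. This gives a left exact $3$-term complex $\xi$ (cf.\ \cite[\S 6]{Che24a}).
\item Stability makes $\xi$ also right exact, so $\xi$ is short exact and $M_\xi\in\CC_2(\A)$.
\item The triangle defining $M_\xi$ shows $H^0(M_\xi)\cong\Cok H^0(g)\cong N$.
\end{itemize}
So $H^0$ is essentially surjective, hence an equivalence $\CC_2(\A)\simeq\mod H^0(\A)$. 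Applying the same argument to $\A^{\op}$, which is again stable, gives $\CC_2(\A^{\op})\simeq\mod H^0(\A^{\op})$. Transporting the duality of \cref{lem:shifted_duality_on_C2} along these two equivalences gives \eqref{equiv:shifted_duality_stable}.

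\emph{Step 3: (2)$\Rightarrow$(1), existence of homotopy cokernels and kernels.} Let $g\colon B\to C$ be closed of degree zero, and put $N=\Cok H^0(g)\in\mod H^0(\A)$. By (2), $N\cong H^0(M)$ for some $M\in\CC_2(\A)$. Then \cref{lem:realization_of_projective_presentation} produces a short exact $\xi=(A\xto{f}B\xto{g}C,h)$ with this prescribed last map $g$. Left exactness of $\xi$ says exactly that $(f,h)$ is a homotopy kernel of $g$. Homotopy cokernels follow by the dual argument: the duality in (2) transports (2) to $\A^{\op}$, and we run the same construction there.

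\emph{Step 4: (2)$\Rightarrow$(1), left exact implies right exact (the main obstacle).} Let $\xi$ be left exact, so $M_\xi\in\CH_\A$ by \cref{lem:defect_in_heart}(1). Also $H^0(M_\xi)=\Cok H^0(g)$ is finitely presented.
\begin{itemize}
\item By (2) there is $M\in\CC_2(\A)$ with $H^0(M)\cong H^0(M_\xi)$.
\item Both objects lie in the heart, so $M\cong M_\xi$ in $\tr(\A)$.
\item Therefore $M_\xi\in\CC_2(\A)$, and \cref{lem:defect_vanishing} shows $\xi$ is short exact, in particular right exact.
\end{itemize}
Conversely, let $\xi$ be right exact. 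Then $\Sigma^2M_\xi^\vee=M_{\xi^{\op}}$ (by \cref{lem:duality_on_defects}) lies in $\CH_{\A^{\op}}$ by \cref{lem:defect_in_heart}(2). So $\xi^{\op}$ is left exact in $\A^{\op}$. Step 3 has shown that (2) holds for $\A^{\op}$ as well, so the argument above applied to $\A^{\op}$ makes $\xi^{\op}$ short exact. Hence $\xi$ is short exact.

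The delicate points are the identification of objects in the heart by their $H^0$, and the fact that (2) is symmetric under passing to $\A^{\op}$. The latter follows because the duality in (2) forces every finitely presented $H^0(\A^{\op})$-module to be of the form $H^0(\Sigma^2M^\vee)$ with $\Sigma^2M^\vee\in\CC_2(\A^{\op})$.
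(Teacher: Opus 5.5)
Your proof is correct. In (2)$\Rightarrow$(1) you follow the paper closely: kernels come from \cref{lem:realization_of_projective_presentation}, and cokernels and the implication ``right exact $\Rightarrow$ left exact'' come from transporting essential surjectivity to $\A^{\op}$ through the duality. There is one variation, in ``left exact $\Rightarrow$ right exact''. The paper applies the duality in (2) to $M_\xi$ and concludes with \cref{lem:defect_in_heart}(2). You use only essential surjectivity of $H^0\colon\CC_2(\A)\to\mod H^0(\A)$ and the heart equivalence to place $M_\xi$ in $\CC_2(\A)$, then conclude with \cref{lem:defect_vanishing}. This also makes explicit the identification of $M_\xi$ with an object of $\CC_2(\A)$, which the paper uses tacitly when it feeds $M_\xi$ into the duality.

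The genuine difference is in (1)$\Rightarrow$(2). You realize every finitely presented module directly: you extend a representative $g$ to a short exact sequence using a homotopy kernel and the coincidence of left and right exactness. You then get the duality by transporting \cref{lem:shifted_duality_on_C2} along the equivalences for $\A$ and $\A^{\op}$. The paper instead invokes Chen's theorem that $\CS_{\all}$ is an exact dg structure with triangulated homotopy category, and identifies $\CC_2(\A)=\CM_{\CS_{\all}}$ with $\defect\BE_{\CS_{\all}}$. It shows the latter equals $\mod H^0(\A)$ ($=\coh H^0(\A)$, via weak kernels) by completing to triangles, and reads off the duality from \cref{cor:shifted_duality_on_defects}. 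Your route is self-contained and needs no extriangulated input. The paper's route costs more, but it exhibits $\CC_2(\A)$ as the defect category of the exact structure $\CS_{\all}$, which is the perspective the classification theorem is built on.
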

\begin{proof}
(1) $\Rightarrow$ (2):
Since $\A$ is stable, $\CS_{\all}$ is an exact dg structure and the
induced extriangulated structure on $H^0(\A)$ is triangulated, see \cite[Theorem~6.5]{Che24a}.
Moreover, the zeroth cohomology functor induces an equivalence
\[
H^0\colon
\CC_2(\A)=\CM_{\CS_{\all}}
\xto{\sim}
\defect\BE_{\CS_{\all}}.
\]
Every finitely presented $H^0(\A)$-module admits a projective
presentation
\[
H^0(\A)(-,B)\lra H^0(\A)(-,C)
\lra M\lra 0.
\]
Completing the corresponding morphism $B\to C$ to a triangle shows
that $M$ is its contravariant defect. Hence
\[
\defect\BE_{\CS_{\all}}=\mod H^0(\A).
\]
Since the triangulated category $H^0(\A)$ has weak kernels, we also
have $\mod H^0(\A)=\coh H^0(\A)$.
The equivalence \eqref{equiv:shifted_duality_stable} follows from
\cref{cor:shifted_duality_on_defects}.

(2) $\Rightarrow$ (1):
Let $\xi=(A\xto{f}B\xto{g}C,h)$ be a left exact sequence
in $\A$.
By \cref{lem:defect_in_heart}(1), its totalization $M_\xi$ belongs
to $\Mod H^0(\A)$. Moreover, the canonical presentation
\[
H^0(B)\xrightarrow{H^0(g)}H^0(C)\longrightarrow M_\xi
\longrightarrow0
\]
shows that $M_\xi$ belongs to $\mod H^0(\A)$.
By \eqref{equiv:shifted_duality_stable},
$\Sigma^2(M_\xi)^\vee$ belongs to $\mod H^0(\A^{\op})$.
It follows from \cref{lem:defect_in_heart}(2) that $\xi$ is right
exact.
Applying the same argument to $\A^{\op}$ proves the converse.
Thus left and right exactness coincide.

It remains to prove the existence of kernels and cokernels.
Let $g\colon B\to C$ be a closed degree-zero morphism in $\A$, and
consider the projective presentation
\[
H^0(B)\overset{H^0(g)}{\lra}
H^0(C)\lra M\lra 0.
\]
By the first equivalence in \textup{(2)}, we may regard
$M$ as an object of $\CC_2(\A)$.
Hence \cref{lem:realization_of_projective_presentation} gives a
short exact $3$-term complex
\[
\xi=(A\xto{f}B\xto{g}C,h).
\]
In particular, $g$ admits a kernel.

By \cref{lem:shifted_duality_on_C2} and
\eqref{equiv:shifted_duality_stable}, we also have
\[
\CC_2(\A^{\op})
=
\CC_2(\A)^\dagger
=
\mod H^0(\A^{\op}).
\]
Applying the preceding argument to $\A^{\op}$ shows that every
closed degree-zero morphism in $\A$ admits a cokernel.
Therefore $\A$ is stable.
\end{proof}

\begin{remark}
The characterization in
\cref{prop:shifted_duality_stable} is the case $d=1$ of Tomonaga's Auslander correspondence \cite[Theorem~0.1]{Tom26}, reformulated in terms of the intrinsically defined subcategory $\CC_2(\A)$.
In this formulation, the coherence and weak-global-dimension conditions appearing in Tomonaga's theorem are encoded by the equivalence
\[
\CC_2(\A)\simeq\mod H^0(\A).
\]
\end{remark}

\subsection{Proof of the classification theorem}

We begin by introducing the notion needed to state our classification theorem.

\begin{definition}\label{def:bi-Serre}
Let $\CM$ be a full subcategory of $\CC_2(\A)$.
We call $\CM$ \emph{bi-Serre} if $\CM$ is a Serre subcategory of $\coh H^0(\A)$ and its shifted dual $\CM^\dagger$, defined in
\cref{def:shifted_dual_subcategory},
is a Serre subcategory of $\coh H^0(\A^{\op})$.
\end{definition}

\begin{theorem}\label{thm:correspondence}
Let $\A$ be a connective additive dg category and assume that $\A$
is idempotent complete.
Then the assignments $\mathsf{S}$ and $\mathsf{M}$ in \eqref{SM}
induce mutually inverse bijections between the following classes:
\begin{enumerate}[label=\textup{(\alph*)}]
\item\label{item:exact_dg_structures}
exact dg structures on $\A$;
\item\label{item:bi_serre_subcategories}
bi-Serre subcategories of $\CC_2(\A)$.
\end{enumerate}
\end{theorem}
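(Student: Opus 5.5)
The plan is to work throughout inside the bijection of \cref{cor:correspondence_defect_subcategories}. We check that $\mathsf{M}$ sends exact dg structures to bi-Serre subcategories, that $\mathsf{S}$ sends bi-Serre subcategories to exact dg structures, and that both composites are identities. For the last point, note that an exact dg structure $\CS$ is isomorphism-closed, additive and closed under direct summands; this is standard, since (Ex0), (Ex2) and (Ex2$^{\op}$) give split sequences and closure under direct sums, and the summand closure uses idempotent completeness of $\A$. Moreover, $\CS$ is totalization-saturated: if $\xi\in\CS$, $\eta\in\CS_{\all}$ and $M_\xi\cong M_\eta$, we want $\eta\in\CS$. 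Once the two directions below are proved, the bijection and the mutual inverse property follow from \cref{cor:correspondence_defect_subcategories}.

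\emph{From exact structures to bi-Serre subcategories.} Let $\CS$ be an exact dg structure and consider the extriangulated category $(H^0(\A),\BE_\CS,\fs_\CS)$ of \cref{thm:exact_dg_to_extri}. By \cref{lem:defect_in_heart}(1), $\CM_\CS$ lies in the heart, and via $H^0$ its objects are exactly the defects $\wtil\delta$ of the $\fs_\CS$-triangles; this is the dg version of \cref{lem:totalization_recovers_defect}, using the canonical presentation $H^0(B)\to H^0(C)\to M_\xi\to0$. Hence $H^0(\CM_\CS)=\defect\BE_\CS$, which is Serre in $\coh H^0(\A)$ by \cref{prop:defect_subcat_is_Serre}. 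By \cref{cor:shifted_duality_on_defects}, $\CM_\CS^\dagger=\CM_{\CS^{\op}}$, and $\CS^{\op}$ is an exact dg structure on $\A^{\op}$ by the self-duality of the axioms together with \cref{lem:duality_on_3-term_complexes}. So the same argument shows that $\CM_\CS^\dagger$ is Serre in $\coh H^0(\A^{\op})$.

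For saturation, take $\xi\in\CS$ and $\eta\in\CS_{\all}$ with $M_\xi\cong M_\eta$. \cref{cor:represents_morph_of_C2} lifts this isomorphism to a morphism $\eta\to\xi$ in $\cpx^3(\A)$. The argument is the dg analogue of Enomoto's: the deflation of $\eta$ is obtained from the deflation $g$ of $\xi$ by a homotopy pullback along $C_\eta\to C_\xi$, followed by composing with, or removing, split pieces. I would prove this via the extriangulated statement that an $\BE$-triangle whose defect lies in $\defect\BE_\CS$ is an $\fs_\CS$-triangle, a fact in \cite{Eno21}, and then pass to dg conflations using \cref{prop:bijection_between_substructures} applied to the greatest structure. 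I would avoid that route if possible and instead argue directly with \ref{EX2}.

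\emph{From bi-Serre subcategories to exact structures.} This is the main obstacle. Let $\CM$ be bi-Serre and $\CS=\CS_\CM$, which is isomorphism-closed by \cref{cor:totalization_functor}. For (Ex0), take $\xi=(0\to A\xto{\id}A)$; then $M_\xi=0\in\CM$.

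For (Ex1), let $g_1\colon Y\to Z$ and $g_2\colon Z\to W$ be deflations. The defect modules satisfy an exact sequence $\Cok H^0(g_1)\to\Cok H^0(g_2g_1)\to\Cok H^0(g_2)\to0$. By the Serre property, $\Cok H^0(g_2g_1)$ lies in $\CM$ as an extension of a quotient of an object of $\CM$ by an object of $\CM$, so in particular it lies in $\CC_2(\A)$. Then \cref{lem:realization_of_projective_presentation} produces a short exact $3$-term complex ending in $g_2g_1$ whose totalization lies in $\CM$, so $g_2g_1$ is a deflation.

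For (Ex2), given a deflation $g$ and a map $u\colon Z'\to Z$, the module $\Cok H^0(g')$ for the induced map $Y\oplus Z'\to Z'$ is the image of $\Cok H^0(g)$ under pulling back the presentation along $u$, hence a submodule of $M_\xi$. This uses the Serre property in $\coh$, and membership in $\CC_2$ still has to be checked. Here the left exactness (heart membership) is automatic, but right exactness is not; this is exactly where the dual Serre condition on $\CM^\dagger$ enters. Concretely, I would show that the homotopy pullback exists by realizing the relevant module in $\CC_2(\A)$. To get this, I would dualize: the homotopy pushout for $\CS^{\op}$ corresponds under \cref{lem:duality_on_defects} to a quotient of $\Sigma^2M_\xi^\vee$, which lies in $\CM^\dagger\subseteq\CC_2(\A^{\op})$ by the Serre property there. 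Dualizing back then gives an object of $\CC_2(\A)$ representing the pullback, and \cref{lem:realization_of_projective_presentation} yields the conflation. The case (Ex2$^{\op}$) is symmetric, using $\CM^\dagger$ and \cref{lem:duality_on_3-term_complexes}.
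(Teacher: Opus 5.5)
Your direction ``exact $\Rightarrow$ bi-Serre'' matches the paper, and your (Ex0) and (Ex1) are fine. Your (Ex1) is actually more direct than the paper's. The image of $\Cok H^0(g_1)\to\Cok H^0(g_2g_1)$ is the cokernel of $H^0(\text{kernel of }g_2)\to M_{\xi_1}$, so it is coherent and lies in $\CM$. Extension closure then puts $\Cok H^0(g_2g_1)$ in $\CM$, and \cref{lem:realization_of_projective_presentation} with prescribed map $g_2g_1$ finishes, with no need for \cite[Lemma~3.37]{Che24a}.

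There is a genuine gap in (Ex2). First, the diagnosis is wrong. $\CM\subseteq\CC_2(\A)$ is Serre in $\coh H^0(\A)$, so every coherent subquotient of $M_\xi$ already lies in $\CM\subseteq\CC_2(\A)$. Right exactness is therefore automatic, and $\CM^\dagger$ is needed only for (Ex2$^{\op}$). Second, the real issue, existence of the homotopy pullback, is not resolved. Realizing the would-be defect $\Im(p\circ H^0(u))\subseteq M_\xi$ by \cref{lem:realization_of_projective_presentation} needs the deflation $g'\colon B'\to C'$ as input, and that is exactly the unknown. Any other presentation gives a conflation ending at $C'$ with the right defect, but no reason for it to be a pullback of $g$. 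Likewise, ``the homotopy pushout for $\CS^{\op}$'' presupposes the object in question, so the dualization is circular. The missing idea is to realize the square itself. The quotient $N=\Cok(p\circ H^0(u))$ of $M_\xi$ lies in $\CM$. Applying the realization lemma to the prescribed presentation $H^0(C')\oplus H^0(B)\xrightarrow{(-H^0(u),\,H^0(g))}H^0(C)\to N\to 0$ gives a short exact sequence $B'\to C'\oplus B\to C$, i.e.\ a homotopy bicartesian square. Its top row $\xi'$ (via the dual of \cite[Corollary~3.35]{Che24a}) then fits into $0\to M_{\xi'}\to M_\xi\to N\to0$, so $M_{\xi'}\in\CM$.

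Totalization-saturation, which is what gives $\mathsf{S}\mathsf{M}(\CS)=\CS$, is also not proved. The route through \cite{Eno21} and \cref{prop:bijection_between_substructures} for $\CS_{\max}$ applies to $\eta$ only if $\eta\in\CS_{\max}$, but you only know $\eta\in\CS_{\all}$. It can be repaired: after proving the other direction, $\CS'=\mathsf{S}\mathsf{M}(\CS)$ is an exact structure containing $\CS$ and $\eta$ with $\CM_{\CS'}=\CM_\CS$, and \cite[Theorem~4.37]{Che24a} then forces $\CS=\CS'$. You did not set this up, though. The paper argues directly:
\begin{enumerate}
\item Lift the isomorphism to $\Xi\colon\eta\to\xi$ by \cref{cor:represents_morph_of_C2}.
\item Factor $\Xi$ through the pullback of $\xi$ along its third component, which lies in $\CS$, to reduce to third component $\id$. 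The comparison of totalizations is a split mono and an epimorphism, hence an isomorphism.
\item The deflation of that pullback and the last map $g_\eta$ of $\eta$ then have the same image in $H^0$ of the common target. So the deflation is homotopic to $g_\eta q$ for some $q$, and $g_\eta q$ is a deflation.
\item Since $g_\eta$ has a homotopy kernel, the dg obscure axiom \cite[Proposition~4.11(b)]{Che24a} makes $g_\eta$ a deflation, and uniqueness of kernels gives $\eta\in\CS$.
\end{enumerate}
Your phrase about ``removing split pieces'' gestures at this, but the obscure-axiom step is the ingredient you are missing.
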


We first show that the class \ref{item:exact_dg_structures} is precisely the subclass of the class \textup{(1)} in \cref{cor:correspondence_defect_subcategories} consisting of those $\CS$ which satisfy
\ref{EX0}--\ref{EX2} and \ref{EX2op}.
Our aim is then to show that the bijection established in
\cref{cor:correspondence_defect_subcategories} restricts to a bijection between this subclass and the class \ref{item:bi_serre_subcategories}:
\begin{equation}\label{restricted_SM}
\Set{\text{exact dg structures on }\A}
\underset{\mathsf{S}}{\overset{\mathsf{M}}{\rightleftarrows}}
\Set{\text{bi-Serre subcategories of }\CC_2(\A)}.
\end{equation}

The proof is rather lengthy, so we divide it into several steps.
We begin by establishing some closure properties of exact dg
structures.

\begin{lemma}\label{lem:closed_under_direct_summands}
Let $\CS$ be an exact dg structure on a dg category $\A$.
Then $\CS$ is additive and closed under direct summands in $\cpx^3(\A)$.
\end{lemma}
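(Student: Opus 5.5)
The plan is to avoid redoing the closure arguments of Quillen's axioms by hand. Instead I would transport the question to the extriangulated category of \cref{thm:exact_dg_to_extri} and then come back using \cref{lem:isomorphisms_in_cpx3}.

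First, the zero $3$-term complex and additivity. By \ref{EX0} the complex $(0\to A\xto{\id}A,0)$ is a conflation, and so is its dual $(A\xto{\id}A\to 0,0)$ (the standard consequence of \ref{EX0} and \ref{EX2op} in \cite{Che24a}). Taking $A=0$ gives the zero complex in $\CS$.

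Next, direct sums. For conflations $\xi,\eta\in\CS$, I would use the standard fact from \cite[\S4]{Che24a} that the conflations of an exact dg category are closed under direct sums. Alternatively, one can argue via the extriangulated structure: $\fs_\CS$ is an additive realization, so $\delta_\xi\oplus\delta_\eta$ is realized by $\xi\oplus\eta$. In either form, this is exactly the statement that the class of conflations corresponds bijectively, up to isomorphism in $\cpx^3(\A)$, to the $\fs_\CS$-triangles. Since $\CS$ is isomorphism-closed, $\xi\oplus\eta\in\CS$.

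The main step is closure under direct summands. Suppose $\xi\oplus\eta\in\CS$ with $\xi,\eta\in\cpx^3(\A)$. First I would check that $\xi$ is short exact: $M_{\xi\oplus\eta}\cong M_\xi\oplus M_\eta$ lies in $\CC_2(\A)$, which is closed under summands, so \cref{lem:defect_vanishing} applies. Hence $\xi$ realizes, in $H^0(\A)$, the extension obtained as follows. Take the $\fs_\CS$-extension $\delta$ of $\xi\oplus\eta$ and apply the split idempotents $e_A=(1,0)$ on $A\oplus A'$ and $e_C$ on $C\oplus C'$. This gives $\delta'=\BE_\CS(e_C,p_A)\delta\in\BE_\CS(C,A)$, where $p_A$ and $i_C$ are the projection and inclusion.

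Realize $\delta'$ by a conflation $\xi''=(A\to B''\to C,h'')\in\CS$. The inclusion and projection of $\xi$ in $\xi\oplus\eta$ give morphisms of short exact sequences in $\cpx^3(\A)$ that are compatible with $\delta'$. The key observation is that the middle terms of $\xi$ and $\xi''$ are then related by a morphism $r_1$ fitting into a morphism of short exact sequences. On the end terms this morphism is the identity of $A$ and of $C$. By the extriangulated five lemma, or equivalently by \cite[Lemma~3.29]{Che24a} together with the uniqueness of realizations up to isomorphism in $\cpx^3(\A)$ (built into \cref{thm:exact_dg_to_extri}), $r_1$ is an isomorphism in $H^0\A$. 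Then \cref{lem:isomorphisms_in_cpx3} gives $\xi\cong\xi''$, so $\xi\in\CS$.

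I expect the hard part to be justifying that a short exact $\xi$ realizing an $\BE_\CS$-class is itself a conflation. This amounts to saying that $\CS$ equals the class of all short exact sequences whose extension class lies in $\BE_\CS$. I would try to get this from Chen's construction of $\BE_\CS$ as isomorphism classes of conflations, together with the uniqueness, up to isomorphism in $\cpx^3(\A)$, of the short exact sequence extending a given closed morphism $A\to B$ or $B\to C$ \cite[Lemma~3.29]{Che24a}.
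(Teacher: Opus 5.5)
\paragraph{The gap in the summand step.}
Your reduction there is the right one. The class $\delta'=\BE_\CS(i_C,p_A)\delta$ (you wrote $e_C$, but it must be the inclusion $i_C$) is the class of $\xi''=(p_A)_*i_C^*(\xi\oplus\eta)$. This complex lies in $\CS$ by \ref{EX2} and \ref{EX2op}, exactly as in the paper.

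The problem is the next step, $\xi\cong\xi''$. The extriangulated five lemma and the uniqueness of realizations apply only to $\fs_\CS$-triangles, that is, to conflations in $\CS$. But $\xi$ is not yet known to be a conflation, so this argument is circular. The citation \cite[Lemma~3.29]{Che24a} only produces morphisms; it says nothing about their invertibility. More generally, $(\BE_\CS,\fs_\CS)$ is built out of $\CS$, so it cannot by itself certify that a complex outside the known class belongs to $\CS$; you have to come back to $\cpx^3(\A)$.

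The missing ingredient is the explicit computation behind the proof of \cite[Lemma~4.24]{Che24a}, which the paper invokes:
\begin{itemize}
\item the homotopy pullback of $\xi\oplus\eta$ along $i_C$ is $\xi\oplus(A'\xrightarrow{\id}A'\to 0,0)$, being the direct sum of the trivially cartesian square for $\xi$ and the cartesian square expressing $A'$ as a kernel of $g'$;
\item the pushout of this along $p_A$ returns $\xi$.
\end{itemize}
Uniqueness of homotopy pullbacks and pushouts then gives $\xi\cong\xi''$ in $\cpx^3(\A)$, with no five lemma needed. Alternatively, you would need a five lemma valid for arbitrary short exact sequences, not just conflations. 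One example is the bicartesian-square argument dual to \cite[Lemma~4.12]{Che24a}, which the paper uses elsewhere and which relies only on short exactness.

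\paragraph{The gap in the direct-sum step.}
Here you either cite the statement being proved, or argue that $\fs_\CS$ realizes $\delta_\xi\oplus\delta_\eta$ by the $H^0$-image of $\xi\oplus\eta$. An equivalence of sequences in $H^0(\A)$ does not by itself give an isomorphism in $\cpx^3(\A)$, and you defer exactly this comparison to ``the hard part''.

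The paper's argument is short and uses only the axioms:
\begin{itemize}
\item $g\oplus\id_{B'}$ and $\id_C\oplus g'$ are homotopy pullbacks of $g$ and $g'$ along projections, hence deflations by \ref{EX2};
\item their composite $g\oplus g'$ is a deflation by \ref{EX1};
\item $\xi\oplus\eta$ is short exact, hence a kernel of $g\oplus g'$, so uniqueness of kernels puts it in $\CS$.
\end{itemize}
Uniqueness of kernels is the tool you mention at the end. However, it has to be applied to a morphism already known to be a deflation, and the factorization $g\oplus g'=(\id_C\oplus g')\circ(g\oplus\id_{B'})$ is the idea your proposal lacks. Your treatment of the zero complex is fine.
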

\begin{proof}
By \ref{EX0}, the identity morphism of the zero object is a deflation,
and hence the zero $3$-term complex belongs to $\CS$.
Let
$$
\xi=(A\xto{f}B\xto{g}C,h)
\qquad\text{and}\qquad
\xi'=(A'\xto{f'}B'\xto{g'}C',h')
$$
be short exact sequences in $\CS_{\all}$.
Recall that $\xi\oplus\xi'$ also belongs to $\CS_{\all}$.

Suppose first that $\xi,\xi'\in\CS$.
By \ref{EX2}, the morphisms
$$
g\oplus\id_{B'}
\colon
B\oplus B'\longrightarrow C\oplus B'
$$
and
$$
\id_C\oplus g'
\colon
C\oplus B'\longrightarrow C\oplus C'
$$
are deflations in $\CS$.
By \ref{EX1}, their composite
$$
g\oplus g'
=
\begin{psmallmatrix}
g&0\\
0&g'
\end{psmallmatrix}
=
\begin{psmallmatrix}
\id_C&0\\
0&g'
\end{psmallmatrix}
\circ
\begin{psmallmatrix}
g&0\\
0&\id_{B'}
\end{psmallmatrix}
$$
is again a deflation in $\CS$.
It follows from the uniqueness of kernels that $\xi\oplus\xi'$ belongs to $\CS$; see also \cite[p.~38, after Lemma~3.29]{Che24a}.
Thus $\CS$ is additive.

Next, suppose that $\xi\oplus\xi'\in\CS$.
By the proof of \cite[Lemma~4.24]{Che24a}, we have an isomorphism
$$
(p_A)_*\iota_C^*(\xi\oplus\xi')
\cong
\xi
$$
in $\cpx^3(\A)$, where
$$
p_A\colon A\oplus A'\longrightarrow A
\qquad\text{and}\qquad
\iota_C\colon C\longrightarrow C\oplus C'
$$
are the canonical projection and inclusion, respectively.
By \ref{EX2} and \ref{EX2op}, it follows that $\xi\in\CS$.
Therefore $\CS$ is closed under direct summands in $\cpx^3(\A)$.
\end{proof}

\begin{lemma}\label{lem:totalization_saturated}
Let $\CS$ be an exact dg structure on a dg category $\A$.
Then $\CS$ is totalization-saturated.
\end{lemma}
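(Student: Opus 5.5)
We must show: if $\xi\in\CS$ and $\eta\in\CS_{\all}$ with $M_\xi\cong M_\eta$ in $\tr(\A)$, then $\eta\in\CS$. The converse direction is the same statement with $\xi$ and $\eta$ swapped, so there is nothing more to prove. The approach is to turn the abstract isomorphism $M_\xi\cong M_\eta$ into concrete morphisms of $3$-term complexes. Then $\eta$ is obtained from $\xi$, up to direct sums with split sequences and up to isomorphism, which the axioms control.

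First, write $\xi=(A\xto{f}B\xto{g}C,h)$ and $\eta=(A'\xto{f'}B'\xto{g'}C',h')$, and fix an isomorphism $\phi\colon M_\xi\to M_\eta$. By \cref{lem:defect_in_heart} both totalizations lie in the heart, so we regard them as $H^0(\A)$-modules. They come with projective presentations
\[
H^0(B)\xto{H^0(g)}H^0(C)\xto{p_\xi}M_\xi\to0
\qquad\text{and}\qquad
H^0(B')\xto{H^0(g')}H^0(C')\xto{p_\eta}M_\eta\to0 .
\]
Composing $\phi p_\xi$ gives a second projective presentation of $M_\eta$ with projective term $H^0(C)$. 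Schanuel's lemma for presentations in $\Mod H^0(\A)$ now applies, using idempotent completeness so that all projectives are representable. It compares the deflations $C\oplus B'\xrightarrow{[\,g\ \ 0\,]\oplus?}\dots$. Concretely, the two epimorphisms $\bigl[\phi p_\xi\ \ 0\bigr]\colon H^0(C\oplus C')\to M_\eta$ and $\bigl[0\ \ p_\eta\bigr]$ differ by an automorphism of $C\oplus C'$, since each summand projects onto $M_\eta$ and the components lift through one another.

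This yields the main reduction: the complexes $\xi\oplus(C'\xto{\id}C'\to 0)\oplus(\dots)$ and $\eta\oplus(C\xto{\id}C\to0)\oplus(\dots)$ realize the same presentation $H^0(C\oplus C')\to M_\eta$ up to an automorphism of $C\oplus C'$. I would phrase this as: $g\oplus 0_{C'}\oplus\id_{B'}$-type deflations and $g'\oplus0_C\oplus\id_B$ have isomorphic cokernel presentations. Adding trivial conflations $(X\xto{\id}X\to0)$ and $(0\to X\xto{\id}X)$ stays inside $\CS$ by \ref{EX0}, its dual, and additivity (\cref{lem:closed_under_direct_summands}).

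Next, I would use \cref{lem:realization_of_projective_presentation} together with the uniqueness of kernels of deflations (\cite[Lemma~3.29]{Che24a}). A short exact complex is determined up to isomorphism in $\cpx^3(\A)$ by its deflation $g$, that is, by the morphism $B\to C$ up to isomorphism in $H^0(\Mor\A)$. Two projective presentations of the same module with the same projective target differ, after adding summands $\id_X\colon X\to X$ to the middle terms, by an isomorphism of morphisms. This is the standard uniqueness of projective presentations modulo trivial summands. Hence
\[
\xi\oplus\tau\cong\eta\oplus\tau'
\]
in $\cpx^3(\A)$, where $\tau,\tau'$ are direct sums of trivial conflations. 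The left side is in $\CS$ by additivity and \cref{lem:isomorphisms_in_cpx3}, and $\CS$ is isomorphism-closed. So $\eta\oplus\tau'\in\CS$, and closure under direct summands (\cref{lem:closed_under_direct_summands}) gives $\eta\in\CS$.

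The main obstacle is the presentation-comparison step. It requires lifting the module-level isomorphism of presentations to an isomorphism of the deflations themselves, in $H^0$ of the morphism category, and not merely of their images. My first attempt would be to build the maps $C\oplus C'\to C'\oplus C$ from lifts of $\phi$ and $\phi^{-1}$ by projectivity. I would then check that the induced map on $B$-terms lands in the kernels, so that the weak-kernel property of $H^0(g)$, $H^0(g')$ (which holds because they are deflations) supplies the middle components. Finally, \cref{lem:isomorphisms_in_cpx3} upgrades componentwise isomorphisms to isomorphisms in $\cpx^3(\A)$.
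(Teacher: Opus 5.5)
Your argument is correct, but it takes a genuinely different route from the paper's.

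The paper's proof runs as follows:
\begin{enumerate}
\item It lifts $\phi$ to a morphism $\Xi\colon\xi\to\xi'$ in $\cpx^3(\A)$ (Corollary~\ref{cor:represents_morph_of_C2}).
\item It factors $\Xi$ through the homotopy pullback $r_2^*\xi'\in\CS$, which reduces to the case $r_2=\id_C$.
\item It deduces $\Im H^0(g)=\Im H^0(g')$, and hence $[g']=[gq]$ for some $q$.
\item It concludes with Chen's dg version of the obscure axiom: $gq$ is a deflation and $g$ has a kernel, so $g$ itself is a deflation.
\end{enumerate}

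You instead stabilize in Schanuel style. Add $(0\to C'\xto{\id}C')$ to $\xi$ and $(0\to C\xto{\id}C)$ to $\eta$. Then the augmentations $[\phi p_\xi\ \ 0]$ and $[0\ \ p_\eta]$ differ by an automorphism $\theta$ of $C\oplus C'$ built from lifts of $\phi$ and $\phi^{-1}$. Consequently the two deflations have the same image up to $\theta$. After adding further summands $(X\xto{\id}X\to0)$ at the middle term, the deflations become isomorphic as arrows of $H^0(\A)$. Choosing a homotopy turns this into an isomorphism in $H^0(\Mor(\A))$. Uniqueness of homotopy kernels (the same fact the paper invokes in Lemma~\ref{lem:closed_under_direct_summands}) and Lemma~\ref{lem:isomorphisms_in_cpx3} then give $\xi\oplus\tau\cong\eta\oplus\tau'$ in $\cpx^3(\A)$.

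What your route buys: it avoids lifting $\phi$ to complexes, the pullback factorization, and the obscure axiom altogether. It also proves more. Any isomorphism-closed class of short exact sequences that contains the split ones and is closed under finite sums and summands is automatically totalization-saturated. So the lemma becomes a formal consequence of Lemma~\ref{lem:closed_under_direct_summands}. The paper's route uses more of the exact structure, but it shows directly that $g$ itself is a deflation, without enlarging anything.

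When you write it up, fix a few slips:
\begin{enumerate}
\item The summand that matches the third terms is $(0\to C'\xto{\id}C')$, not $(C'\xto{\id}C'\to0)$.
\item The lift of $\theta\circ(g\oplus\id_{C'})$ through $\id_C\oplus g'$ does not come from a weak-kernel property. It comes from projectivity of representables together with exactness of the presentations at the middle term (image equals kernel of the augmentation).
\item That $(X\xto{\id}X\to0)$ lies in $\CS$ is not an axiom. It follows from (Ex2$^{\op}$) applied to the inflation $0\to0$ along $0\to X$.
\end{enumerate}
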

\begin{proof}
Let
\[
\xi=(A\xto{f}B\xto{g}C,h)\in\CS_{\all}
\]
and
\[
\xi'=(A'\xto{f'}B'\xto{g'}C',h')\in\CS,
\]
and suppose that there is an isomorphism
$
\phi\colon M_\xi\xrightarrow{\sim}M_{\xi'}
$
in $\CC_2(\A)$.
By \cref{cor:represents_morph_of_C2}, the isomorphism $\phi$ lifts to a morphism
\[
\Xi=(r_0,r_1,r_2,s_1,s_2,t)
\colon
\xi\longrightarrow\xi'
\]
in $\cpx^3(\A)$ such that
$
\Tot_\A(\Xi)=\phi
$.
In particular, its restrictions to the first and third terms are
\[
r_0\colon A\longrightarrow A'
\qquad\text{and}\qquad
r_2\colon C\longrightarrow C',
\]
respectively.
Let
\[
\eta=(A'\longrightarrow D\longrightarrow C)
\]
be the pullback of $\xi'$ along $r_2$.
Since $\xi'\in\CS$, axiom \ref{EX2} gives
$\eta=r_2^*\xi'\in\CS$.
Thus, we obtain the last two rows of the following diagram:
\[
\begin{tikzcd}
A\arrow{r}{f}\arrow{d}[swap]{r_0}
&B\arrow{r}{g}\arrow[dotted]{d}
&C\arrow[equal]{d}
\\
A'\arrow{r}\arrow[equal]{d}
&D\arrow{r}\arrow{d}
&C\arrow{d}{r_2}
\\
A'\arrow{r}{f'}
&B'\arrow{r}{g'}
&C'.
\end{tikzcd}
\]
By the universal property of the pullback, or more precisely
by \cite[Lemma~3.36]{Che24a}, the morphism $\Xi$ factors as
\[
\xi\xto{\beta}\eta\xto{\gamma}\xi',
\]
where the restrictions of $\beta$ to the first and third terms are
$r_0$ and $\id_C$, respectively. This is the construction underlying
\cite[Lemma~4.10]{Che24a}.
Since both $\xi$ and $\eta$ are short exact and the
restriction of $\beta$ to their third terms is the identity, the dual
of the argument of \cite[Lemma~4.12]{Che24a} shows that the induced square
\[
\begin{tikzcd}
A\arrow{r}{f}\arrow{d}[swap]{r_0}
&B\arrow{d}
\\
A'\arrow{r}
&D
\end{tikzcd}
\]
is a pushout square.
Consequently, $\eta$ may also be
identified with the pushout of $\xi$ along $r_0$:
\[
\eta\cong(r_0)_*\xi.
\]

We next compare the totalizations of $\xi$, $\eta$, and $\xi'$.
Put
\[
a=\Tot_\A(\beta)\colon M_\xi\longrightarrow M_\eta
\qquad\text{and}\qquad
b=\Tot_\A(\gamma)\colon M_\eta\longrightarrow M_{\xi'}.
\]
Since $\Xi=\gamma\circ\beta$, we have
$b\circ a=\Tot_\A(\Xi)=\phi$.
In particular,
\[
\phi^{-1}\circ b\colon M_\eta\longrightarrow M_\xi
\]
is a retraction of $a$, so that $a$ is a split monomorphism.

On the other hand, the restriction of $\beta$ to the third terms is
the identity of $C$. Consider the resulting commutative diagram in
$\Mod H^0(\A)$:
\[
\begin{tikzcd}
H^0(B)\arrow{r}\arrow{d}
&H^0(C)\arrow{r}\arrow[equal]{d}
&M_\xi\arrow{r}\arrow{d}{a}
&0
\\
H^0(D)\arrow{r}
&H^0(C)\arrow{r}
&M_\eta\arrow{r}
&0.
\end{tikzcd}
\]
It follows that $a$ is an epimorphism. Equivalently, this also follows from the argument of \cite[Lemma~3.9]{Che24b}.
Since $a$ is both a split monomorphism and an epimorphism in the heart $\CH_\A$, it is an isomorphism. Therefore,
\[
M_\xi\cong M_\eta.
\]
It also follows from $b\circ a=\phi$ that $b$ is an isomorphism, and hence $M_\eta\cong M_{\xi'}$.
We have therefore reduced the remaining argument to the following
special case:
\[
\xi=(A\xto{f}B\xto{g}C,h)\in\CS_{\all},
\qquad
\eta=(A'\longrightarrow D\longrightarrow C)\in\CS,
\]
together with a morphism
$
\beta\colon\xi\to\eta
$
whose restriction to the third terms is $\id_C$ and whose totalization
\[
\Tot_\A(\beta)\colon M_\xi\xto{\sim}M_\eta
\]
is an isomorphism.
Thus, after replacing $\xi'$ by $\eta$, we may assume from the outset that
\[
C=C',
\qquad
r_2=\id_C,
\qquad
\xi'\in\CS,
\qquad
\Tot_\A(\Xi)\text{ is an isomorphism}.
\]
It remains to prove $\xi\in\CS$ under these assumptions.
\[
\begin{tikzcd}
A\arrow{r}{f}\arrow{d}[swap]{r_0}
&B\arrow{r}{g}\arrow{d}{r_1}
&C\arrow[equal]{d}
\\
A'\arrow{r}{f'}
&B'\arrow{r}{g'}
&C
\end{tikzcd}
\]
Under the above assumptions, the morphism
$
\Tot_\A(\Xi)\colon M_\xi\to M_{\xi'}
$
is induced by the identity morphism of $H^0(C)$. Indeed, we have a
commutative diagram in $\Mod H^0(\A)$
\[
\begin{tikzcd}
H^0(B)\arrow{r}{H^0(g)}\arrow{d}{H^0(r_1)}
&H^0(C)\arrow{r}\arrow[equal]{d}
&M_\xi\arrow{r}\arrow{d}{\Tot_\A(\Xi)}
&0
\\
H^0(B')\arrow{r}{H^0(g')}
&H^0(C)\arrow{r}
&M_{\xi'}\arrow{r}
&0.
\end{tikzcd}
\]
In particular, $\Im H^0(g)\subseteq\Im H^0(g')$.
Since $\Tot_\A(\Xi)$ is an isomorphism, the induced quotient map is an isomorphism,
and hence
\[
\Im H^0(g)=\Im H^0(g')
\]
as submodules of $H^0(C)$.
Evaluating this equality at $B'$, we obtain
\[
[g']
\in
\Im\bigl(
H^0(\A)(B',B)
\xrightarrow{\,H^0(g)\circ-\,}
H^0(\A)(B',C)
\bigr).
\]
Thus, there exists a morphism
\[
q\colon B'\to B
\]
in $H^0(\A)$ such that
$
[gq]=[g']
$
in $H^0(\A)$.
After choosing a closed representative of $q$, the morphisms $gq$ and $g'$ are homotopic.
Since $\xi'\in\CS$, the morphism $g'$ is a deflation.
Hence $gq$ is also a deflation; see \cite[Remark~4.4]{Che24a}.
Moreover, since $\xi\in\CS_{\all}$, the morphism $g$ admits a kernel.
Therefore, \cite[Proposition~4.11(b)]{Che24a} implies that $g$ is a deflation.

Consequently, $g$ admits a kernel belonging to $\CS$.
On the other hand, $\xi$ is itself a kernel of $g$.
By the uniqueness of kernels, $\xi$ is isomorphic in $\cpx^3(\A)$ to a conflation in $\CS$.
Since $\CS$ is closed under isomorphisms, we conclude that
$$
\xi\in\CS.
$$
This proves that $\CS$ is totalization-saturated.
\end{proof}

\begin{lemma}\label{lem:exact_implies_bi_serre}
Let $\CS$ be an exact dg structure on $\A$.
Then $\mathsf{M}(\CS)=\CM_\CS$ is a bi-Serre subcategory of $\CC_2(\A)$.
\end{lemma}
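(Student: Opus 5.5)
The plan is to reduce both halves of the bi-Serre condition to Enomoto's result, \cref{prop:defect_subcat_is_Serre}, applied to the extriangulated category of \cref{thm:exact_dg_to_extri}. The second half will follow from the first by passing to $\A^{\op}$ through \cref{cor:shifted_duality_on_defects}.

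\emph{Step 1: $\CM_\CS$ is Serre in $\coh H^0(\A)$.} Every $\xi\in\CS$ is short exact, so by \cref{lem:defect_in_heart}(1) the totalization $M_\xi$ lies in the heart $\CH_\A$. We identify $\CH_\A$ with $\Mod H^0(\A)$ via \cref{lem:standard_t-str}.

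1. The triangle $U_\xi\xrightarrow{\psi_\xi}C\to M_\xi\to\Sigma U_\xi$ gives $H^0(M_\xi)\cong\Cok\bigl(H^0(\A)(-,B)\xrightarrow{g\circ-}H^0(\A)(-,C)\bigr)$. This uses connectivity: $H^0(U_\xi)\to H^0(C)$ has image equal to the image of $H^0(B)$, since the $\Sigma A$-summand contributes nothing in degree $0$ after composing, and $H^1(U_\xi)$-type terms vanish appropriately. This is the projective presentation already used in \cref{cor:represents_morph_of_C2}.
2. By the construction of $\fs_\CS$ in \cite[Theorem~4.30]{Che24a}, the $\fs_\CS$-triangles are exactly the images in $H^0(\A)$ of conflations $\xi\in\CS$, with deflation $H^0(g)$. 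Hence this cokernel is precisely the contravariant defect $\wtil\delta$ of \cref{def:defect_subcategory}.
3. Therefore the heart equivalence $H^0$ maps $\CM_\CS$ onto $\defect\BE_\CS$, up to isomorphism closure.
4. By \cref{prop:defect_subcat_is_Serre}, $\defect\BE_\CS$ is a Serre subcategory of $\coh H^0(\A)$. Since the heart equivalence is exact, $\CM_\CS$ is Serre in $\coh H^0(\A)$ inside $\CH_\A$.

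Smallness is assumed throughout, so the skeletal smallness hypothesis of \cref{prop:defect_subcat_is_Serre} holds.

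\emph{Step 2: the dual side.} By \cref{cor:shifted_duality_on_defects}, $\CM_\CS^\dagger=\CM_{\CS^{\op}}$. It therefore suffices to show that $\CS^{\op}$ is an exact dg structure on $\A^{\op}$ and then apply Step 1 to $(\A^{\op},\CS^{\op})$. The hypotheses transfer: $\A^{\op}$ is connective and additive, and it is idempotent complete because $H^0(\A^{\op})=H^0(\A)^{\op}$.

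Short exactness of each $\xi^{\op}$ holds because left and right exactness are interchanged by duality. At the level of totalizations this can also be read off from \cref{lem:duality_on_defects} and \cref{lem:shifted_duality_on_C2}, via \eqref{diag:short_exact_C2} for $\A^{\op}$. By \cref{lem:duality_on_3-term_complexes}, $\CS^{\op}$ is isomorphism-closed.

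For the axioms, the deflations of $\CS^{\op}$ are the $f^{\op}$ with $f$ an inflation of $\CS$. So \ref{EX0} and \ref{EX2} for $\CS^{\op}$ amount to: identities are inflations, and \ref{EX2op} for $\CS$. Likewise \ref{EX2op} for $\CS^{\op}$ is \ref{EX2} for $\CS$.

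\emph{Main obstacle.} The genuinely nontrivial input is \ref{EX1} for $\CS^{\op}$, namely that inflations in $\CS$ are closed under composition. Chen's axioms are stated asymmetrically, so this is not immediate. I would cite Chen's verification that exact dg structures are self-dual, i.e.\ that inflations compose and identities are inflations \cite[\S4]{Che24a}. An alternative route is to transfer the statement to the extriangulated category $(H^0(\A),\BE_\CS,\fs_\CS)$, where inflations compose by \cite{NP19}, and then lift back using that conflations in $\CS$ are determined up to isomorphism by their inflation, via uniqueness of cokernels. A secondary, more bookkeeping-level point is the careful identification in Step 1 of $H^0(M_\xi)$ with Enomoto's defect, including naturality, so that the heart equivalence carries $\CM_\CS$ exactly onto $\defect\BE_\CS$.
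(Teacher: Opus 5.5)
Your proposal is correct and follows the paper's proof: identify $\CM_\CS$ with $\defect\BE_\CS$ through the heart equivalence, apply \cref{prop:defect_subcat_is_Serre}, then repeat the argument for $(\A^{\op},\CS^{\op})$ using $\CM_\CS^\dagger=\CM_{\CS^{\op}}$ from \cref{cor:shifted_duality_on_defects}. The only difference is that you spell out what the paper calls ``the symmetry of the exact dg structure,'' correctly isolating the self-duality of Chen's axioms (inflations compose, identities are inflations) as the input needed for $\CS^{\op}$ to be an exact dg structure on $\A^{\op}$.
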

\begin{proof}
By \cref{thm:exact_dg_to_extri}, the exact dg structure $\CS$ determines an extriangulated structure $(\BE_\CS,\fs_\CS)$ on $H^0(\A)$.
Under the equivalence
\[
H^0\colon\CH_\A\xrightarrow{\sim}\Mod H^0(\A)
\]
of \cref{lem:standard_t-str},
the subcategory $\CM_\CS$ identifies with the defect subcategory $\defect\BE_\CS$ of \cref{def:defect_subcategory}.
By \cref{prop:defect_subcat_is_Serre}, $\defect\BE_\CS$ is a Serre subcategory of $\coh H^0(\A)$.
By the symmetry of the exact dg structure, we have an exact dg category $(\A^{\op},\CS^{\op})$ and the associated extriangulated category 
$\bigl(H^0(\A^{\op}),\BE_{\CS^{\op}},\fs_{\CS^{\op}}\bigr)$.
Again, by \cref{prop:defect_subcat_is_Serre}, we see that
$\CM_{\CS^{\op}}$ is a Serre subcategory of
$\coh H^0(\A^{\op})$.
By \cref{cor:shifted_duality_on_defects}, we have $\CM_\CS^\dagger=\CM_{\CS^{\op}}$.
Hence $\CM_\CS$ is a bi-Serre subcategory of $\CC_2(\A)$.
\end{proof}

By \cref{lem:exact_implies_bi_serre}, the assignment $\mathsf{M}$ restricts to a map
\[
\mathsf{M}\colon
\Set{\text{exact dg structures on }\A}
\longrightarrow
\Set{\text{bi-Serre subcategories of }\CC_2(\A)}.
\]
On the other hand,
by \cref{lem:closed_under_direct_summands,lem:totalization_saturated},
every exact dg structure belongs to the class \textup{(1)} of \cref{cor:correspondence_defect_subcategories}.
Therefore, that corollary yields
\[
\mathsf{S}\circ\mathsf{M}=\id
\]
on the class of exact dg structures on $\A$.

\begin{lemma}\label{lem:bi_serre_implies_exact}
Let $\CM$ be a bi-Serre subcategory of $\CC_2(\A)$.
Then $\mathsf{S}(\CM)=\CS_\CM$ is an exact dg structure on $\A$.
\end{lemma}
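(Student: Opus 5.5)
We verify (Ex0), (Ex1), (Ex2) and (Ex2$^{\op}$) for $\CS_\CM$ directly, using the Serre property of $\CM$ for the deflation axioms and the Serre property of $\CM^\dagger$, transported through \cref{lem:duality_on_defects}, for the dual axioms. First, $\CS_\CM$ is isomorphism-closed by \cref{cor:totalization_functor}. It consists of short exact sequences by \eqref{diag:short_exact_C2}. Since $\CM$ is closed under isomorphisms, the zero object lies in $\CM$, being a Serre subcategory. Hence for (Ex0) the complex $(0\to A\xto{\id}A)$ has $M_\xi\cong 0$, which gives (Ex0).

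\textbf{Duality.} By \cref{cor:shifted_duality_on_defects} we have $(\CS_\CM)^{\op}=\CS_{\CM^\dagger}$ as classes in $\cpx^3(\A^{\op})$. Indeed, $M_{\xi^{\op}}\cong\Sigma^2M_\xi^\vee$ lies in $\CM^\dagger$ if and only if $M_\xi\in\CM$, using that $\Sigma^2(-)^\vee$ is a duality on $\CC_2$ (\cref{lem:shifted_duality_on_C2}). Since $\CM^\dagger$ is a Serre subcategory of $\coh H^0(\A^{\op})$ and $(\CM^\dagger)^\dagger=\CM$, the situation is symmetric. So once (Ex1) and (Ex2) are proved for every pair $(\A,\CM)$ with $\CM$ Serre in $\coh H^0(\A)$, (Ex2$^{\op}$) follows by applying them to $(\A^{\op},\CM^\dagger)$. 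Note that only the first half of the bi-Serre condition is used on each side.

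\textbf{(Ex2).} Let $\xi=(A\to B\xto{g}C,h)\in\CS_\CM$ and $u\colon C'\to C$ in $Z^0\A$. In the heart, $M_\xi=\Cok H^0(g)$. Consider the composite $H^0(C')\to H^0(C)\to M_\xi$; its image $N$ is a subobject of $M_\xi$. $N$ is finitely generated, and it is coherent because $M_\xi$ is coherent and $\coh$ is closed under finitely generated submodules. Hence $N\in\CM$ by the Serre property. Moreover, $N$ lies in $\CC_2(\A)$ because $\CM\subseteq\CC_2(\A)$. Write $N=\Cok(H^0(Y')\xto{H^0(g')}H^0(C'))$, where $H^0(Y')\to\Ker$ is a projective cover of the kernel of $H^0(C')\to N$; such a cover exists because $N$ is finitely presented and $\A$ is idempotent complete, so the kernel is a quotient of a representable. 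Then \cref{lem:realization_of_projective_presentation} produces a short exact $\xi'=(A'\to Y'\xto{g'}C',h')$ with $M_{\xi'}\cong N\in\CM$, so $\xi'\in\CS_\CM$. It remains to see that $g'$ realises the homotopy pullback of $g$ along $u$. I would choose $Y'$ more canonically, namely as the homotopy pullback $C'\times^h_C B$. The task is then to show that this pullback exists and that its projection to $C'$ has cokernel $N$ under $H^0$. I expect to obtain existence by building the pullback as the kernel of $[u\ g]\colon C'\oplus B\to C$, using \cref{lem:realization_of_projective_presentation} applied to $\Cok H^0([u\ g])$. This cokernel equals $M_\xi/N$, which lies in $\CM\subseteq\CC_2(\A)$ by the Serre property.

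\textbf{(Ex1).} This is the main obstacle. Given deflations $g\colon B\to C$ and $g_2\colon C\to D$ in $\CS_\CM$, there is an exact sequence in $\Mod H^0(\A)$
\[
M_{\xi_g}\to M_{\xi_{g_2g}}\to M_{\xi_{g_2}}\to 0 .
\]
Both outer terms lie in $\CM$, so $\Cok H^0(g_2g)\in\CM$ because Serre subcategories are closed under extensions and quotients. By \cref{lem:realization_of_projective_presentation}, $g_2g$ then has a kernel $\xi''$ with $M_{\xi''}\in\CM$. The point needing care is that \cref{lem:realization_of_projective_presentation} realises the given presentation with the \emph{prescribed} last morphism $g_2g$. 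Hence $g_2g$ is a deflation of $\xi''\in\CS_\CM$, which proves (Ex1). The delicate parts I expect are the coherence bookkeeping, and checking that the realisation of $\Cok H^0([u\ g])$ gives an honest homotopy pullback, for which I would compare with \cite[Lemma~3.36]{Che24a}.
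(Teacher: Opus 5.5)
Your proposal is correct. It shares the paper's skeleton: (Ex0) via the contractible totalization, (Ex2$^{\op}$) via $(\CS_\CM)^{\op}=\CS_{\CM^\dagger}$, and in (Ex2) you realize the same auxiliary quotient of $M_\xi$ with prescribed last morphism $[u\ g]$. Where you differ is in how the final conflation is produced.

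The paper still needs Chen's structural results at this point. For (Ex2) it uses the dual of \cite[Corollary~3.35]{Che24a} to turn the homotopy cartesian square into a morphism $\xi'\to\xi$ that is the identity on first terms. For (Ex1) it uses \cite[Lemma~3.37]{Che24a} to build a conflation $\xi''$ over $g'g$ mapping to $\xi'$. In both cases it then applies (the dual of) \cite[Lemma~3.9]{Che24b} to get the short exact sequences of defects $0\to M_{\xi'}\to M_\xi\to M_\eta\to0$, resp.\ $0\to M_\eta\to M_{\xi''}\to M_{\xi'}\to0$.

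You instead compute the relevant cokernel directly in $\Mod H^0(\A)$ and apply \cref{lem:realization_of_projective_presentation} a second time with the prescribed last morphism ($g'$, resp.\ $g_2g$). In (Ex1) this removes the auxiliary $\eta$ and \cite[Lemma~3.37]{Che24a} entirely: the elementary exact sequence $\Cok H^0(g)\to\Cok H^0(g_2g)\to\Cok H^0(g_2)\to0$ already puts $\Cok H^0(g_2g)$ in $\CM$. Your route uses only the realization lemma and the exactness on $H^0$ that left exactness provides. The paper's route yields more structure, namely the new conflation as an honest pullback of $\xi$ with an explicit morphism to $\xi$, and short exact sequences of defects. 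That extra structure is not needed for the axioms, since being a deflation only requires some conflation in $\CS_\CM$ ending in the given morphism.

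Two points to tighten.

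In (Ex2), discard the first attempt with an arbitrary presentation. The step you list as ``the task'' is immediate. Left exactness of the realized $\eta=(Y'\to C'\oplus B\to C)$ makes $H^0(Y')\to H^0(C')\oplus H^0(B)\to H^0(C)$ exact. Hence $\Im H^0(g')=H^0(u)^{-1}(\Im H^0(g))$, so $\Cok H^0(g')\cong N$. Then \cref{lem:realization_of_projective_presentation}, applied with prescribed $g'$, gives the conflation.

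In (Ex1), write $\Cok H^0(g_2g)$ rather than $M_{\xi_{g_2g}}$ until a kernel of $g_2g$ has been constructed. The coherence bookkeeping goes as follows. Let $A_2$ be the first term of $\xi_{g_2}$. By left exactness of $\xi_{g_2}$, the kernel of $M_{\xi_g}\to\Cok H^0(g_2g)$ is the image of $H^0(A_2)$ in $M_{\xi_g}$. This is a finitely generated submodule of a module in $\CM$, so both it and the corresponding quotient lie in $\CM$. This is exactly the paper's preliminary observation.
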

\begin{proof}
We first make a basic observation which will be used repeatedly.
Let $M\in\CM$ and let
\[
v\colon H^0(X)\longrightarrow M
\]
be a morphism in $\Mod H^0(\A)$.
Its image is a finitely generated submodule of the coherent module $M$, and hence is coherent.
Since $\CM$ is a Serre subcategory of $\coh H^0(\A)$, both $\Im v$ and $\Cok v$ belong to $\CM$.

By \cref{lem:defect_vanishing}, the class $\CS_\CM$ consists of short exact sequences.
Moreover, it is closed under isomorphisms since totalization preserves isomorphisms and $\CM$ is closed under isomorphisms.
It remains to verify the axioms \ref{EX0}--\ref{EX2} and \ref{EX2op}.

\ref{EX0}: 
Consider a split short exact sequence
\[
\xi_A=
(0\longrightarrow A\xrightarrow{\id_A}A,0).
\]
Its totalization $M_{\xi_A}$ is contractible and hence isomorphic to
zero in $\tr(\A)$.
Hence $M_{\xi_A}\in\CM$ and $\xi_A\in\CS_\CM$.
Thus $\id_A$ is a deflation in $\CS_\CM$.

\ref{EX2}:
Let $\xi=(A\xto{f}B\xto{g}C,h)$ be a short exact sequence with $M_{\xi}\in\CM$ and $u\colon C'\to C$ a morphism in $Z^0\A$.
Under the cohomological functor $H^0\colon \CD(\A)\to \Mod H^0(\A)$, we have a projective presentation
\begin{equation*}
H^0(B)\xto{H^0(g)} H^0(C)\xto{p_1} M_\xi\to 0.
\end{equation*}
We consider the factor module $N$ of $M_\xi$ defined by the following exact sequence:
\[
H^0(C')\oplus H^0(B)\xto{\begin{bsmallmatrix}
    H^0(-u), & H^0(g)
\end{bsmallmatrix}} H^0(C)\xto{p_2}N\to 0.
\]
Equivalently, $N$ is the factor module of $M_\xi$ by the image of the
morphism
\[
p_1\circ H^0(u)\colon H^0(C')\longrightarrow M_\xi.
\]
Hence $N\in\CM$ by the preceding observation.
Applying the construction in the proof of
\cref{lem:realization_of_projective_presentation} to the prescribed
projective presentation, there exists a bicartesian square
\[
\begin{tikzcd}
B'\arrow{r}{g'}\arrow{d}[swap]{u'}\arrow[red]{rd}{s} & C'\arrow{d}{u}\\
B\arrow{r}{g} & C 
\end{tikzcd}
\]
such that the folding is a short exact sequence $\eta$ and is a realization of $N$ in the sense that $M_{\eta}\cong N$.
Since the square is homotopy cartesian, the dual of
\cite[Corollary~3.35]{Che24a} shows that it extends to a morphism
of $3$-term complexes
\[
\begin{tikzcd}
A\arrow{r}{f'}\arrow[equal]{d}
&B'\arrow{r}{g'}\arrow{d}[swap]{u'}\arrow[red]{rd}{s}
&C'\arrow{d}{u}\\
A\arrow{r}{f}
&B\arrow{r}{g}
&C,
\end{tikzcd}
\]
whose restriction to the first terms is $\id_A$.
Hence the top row gives a short exact sequence $\xi'$ which is a
homotopy pullback of $\xi$ along $u$. By the dual of
\cite[Lemma~3.9]{Che24b}, there is an exact sequence
\[
0\longrightarrow M_{\xi'}
\longrightarrow M_\xi
\longrightarrow M_\eta
\longrightarrow0
\]
in $\coh H^0(\A)$.
Since $M_\xi\in\CM$ and $\CM$ is a Serre subcategory, we have
$M_{\xi'}\in\CM$. Therefore, $\xi'\in\CS_\CM$.

\ref{EX2op}:
Apply the argument for \ref{EX2} to the dg category $\A^{\op}$ and
the Serre subcategory $\CM^\dagger$ of $\coh H^0(\A^{\op})$.
By \cref{lem:duality_on_defects} and the definitions, we have
\[
(\CS_\CM)^{\op}=\CS_{\CM^\dagger}.
\]
Thus, axiom \ref{EX2} for $\CS_{\CM^\dagger}$ in $\A^{\op}$ is
precisely axiom \ref{EX2op} for $\CS_\CM$ in $\A$.

\ref{EX1}:
Let the following be short exact sequences
\[
\xi=(A\xto{f}B\xto{g}C,h)
\qquad
\text{and}
\qquad
\xi'=(A'\xto{f'}C\xto{g'}C',h')
\]
with $M_\xi, M_{\xi'}\in\CM$.
Similarly to the proof of \ref{EX2}, we consider the factor module $N$ of $M_\xi$ defined by the exactness of the following sequence
\[
H^0(A')\oplus H^0(B)\xto{\begin{bsmallmatrix}
    H^0(-f'), & H^0(g)
\end{bsmallmatrix}} H^0(C)\xto{p}N\to 0
\]
in $\Mod H^0(\A)$.
Since $N\in\CM$, we also have a short exact sequence
\[
\eta=\bigl(
A''\xto{\begin{bsmallmatrix}
    r_0 \\ f''
\end{bsmallmatrix}}A'\oplus B\xto{\begin{bsmallmatrix}
    -f', & g
\end{bsmallmatrix}}C,s
\bigr)
\]
with an isomorphism $N\cong M_\eta$.
Thanks to \cite[Lemma~3.37]{Che24a}, we have a morphism to $\xi'$:
\[
\begin{tikzcd}[row sep=0.8cm, column sep=1.2cm]
A''
 \arrow{r}{f''}
 \arrow[bend left]{rr}{h''}
 \arrow{d}[swap]{r_0}
 \arrow[draw=red]{rd}[swap]{\textcolor{red}{s}}
 \arrow[draw=blue]{rrd}{\textcolor{blue}{t}}
&B
 \arrow{r}{g'g}
 \arrow{d}[swap]{g}
 \arrow[draw=red]{rd}{\textcolor{red}{0}}
&C'\arrow[equal]{d}{}
\\
A'
 \arrow{r}[swap]{f'}
 \arrow[bend right]{rr}[swap]{h'}
&C\arrow{r}[swap]{g'}
&C'.
\end{tikzcd}
\]
and the top row $\xi''$ is a short exact sequence.
By \cite[Lemma~3.9]{Che24b}, this induces a short exact sequence
\[
0\to M_\eta\to M_{\xi''}\to M_{\xi'}\to 0
\]
in $\Mod H^0(\A)$.
We conclude $M_{\xi''}\in\CM$ and $\xi''\in\CS_\CM$.
In particular, the composite $g'g$ is a deflation.
\end{proof}

\begin{proof}[Proof of \cref{thm:correspondence}]
By \cref{lem:bi_serre_implies_exact}, the assignment $\mathsf{S}$ restricts to a map
\[
\mathsf{S}\colon
\Set{\text{bi-Serre subcategories of }\CC_2(\A)}
\longrightarrow
\Set{\text{exact dg structures on }\A}.
\]
Since every bi-Serre subcategory is additive and closed under direct summands, \cref{cor:correspondence_defect_subcategories} yields $\mathsf{M}\circ\mathsf{S}=\id$.
Together with the equality $\mathsf{S}\circ\mathsf{M}=\id$ established above, this completes the lengthy proof.
\end{proof}

%%%%%%%%%%%%%%%%%%%%%%%%%%%%%%%%%%%%%%%%%%%%%%%%%%%%%%%%%%%%%%%%%
\subsection{The lattice of exact dg structures}
\label{subsec:lattice_exact_dg}
%%%%%%%%%%%%%%%%%%%%%%%%%%%%%%%%%%%%%%%%%%%%%%%%%%%%%%%%%%%%%%%%%

We now apply \cref{thm:correspondence} to study the poset of exact dg
structures on $\A$.

\begin{definition}\label{def:poset_on_A}
We denote by
\[
\Ex_{\dg}(\A)
\]
the poset of exact dg structures on $\A$, ordered by inclusion.
Thus, for $\CS,\CS'\in\Ex_{\dg}(\A)$, we write
\[
\CS\leq\CS'
\quad\Longleftrightarrow\quad
\CS\subseteq\CS'.
\]
\end{definition}

For the remainder of this subsection, we assume that $\A$ satisfies the assumptions of \cref{thm:correspondence}.

By \cref{thm:correspondence}, the assignments $\mathsf M$ and $\mathsf S$ induce an isomorphism of posets between $\Ex_{\dg}(\A)$ and the poset of bi-Serre subcategories of $\CC_2(\A)$.

\begin{proposition}\label{prop:lattice_of_exact_dg_structures}
The poset $\Ex_{\dg}(\A)$ is a complete lattice. In particular, it
admits a greatest element $\CS_{\max}$.
\end{proposition}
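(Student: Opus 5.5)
By \cref{thm:correspondence}, the maps $\mathsf M$ and $\mathsf S$ give mutually inverse bijections between $\Ex_{\dg}(\A)$ and the set of bi-Serre subcategories of $\CC_2(\A)$. Both maps are order preserving for inclusion: $\CS\subseteq\CS'$ implies $\CM_\CS\subseteq\CM_{\CS'}$, and $\CM\subseteq\CM'$ implies $\CS_\CM\subseteq\CS_{\CM'}$, directly from the definitions. Hence this bijection is an isomorphism of posets. So it suffices to show that the poset of bi-Serre subcategories of $\CC_2(\A)$, ordered by inclusion, is a complete lattice.

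For this I will show that the class of bi-Serre subcategories is closed under arbitrary intersections, including the empty intersection, which I interpret as the top element. A poset in which every subset has an infimum is a complete lattice; the supremum of a family is the infimum of its set of upper bounds.

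First, let $\{\CM_i\}_{i\in I}$ be a nonempty family of bi-Serre subcategories and put $\CM=\bigcap_i\CM_i$. Each $\CM_i$ is a Serre subcategory of $\coh H^0(\A)$, via the identification of $\CC_2(\A)$ with a subcategory of $\Mod H^0(\A)$ through the heart. Intersections of Serre subcategories are Serre: closure under subobjects, quotients and extensions is checked termwise.

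For the dual side, the shifted duality $\Sigma^2(-)^\vee$ is an equivalence $\CC_2(\A)^{\op}\xrightarrow{\sim}\CC_2(\A^{\op})$ by \cref{lem:shifted_duality_on_C2}. Since every subcategory is isomorphism closed, taking essential images commutes with intersections, so $\CM^\dagger=\bigcap_i\CM_i^\dagger$. This is again an intersection of Serre subcategories of $\coh H^0(\A^{\op})$, hence Serre. Thus $\CM$ is bi-Serre.

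The main obstacle is the empty intersection, that is, the existence of a greatest bi-Serre subcategory; $\CC_2(\A)$ itself need not be bi-Serre. I will construct it as the union-type object directly. Let $\CM_{\max}$ be the intersection of all Serre subcategories of $\coh H^0(\A)$ that contain every bi-Serre subcategory. This is the Serre subcategory generated by all bi-Serre subcategories, and it is nonzero-safe since $0$ is bi-Serre.

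It is cleaner, though, to use \cref{thm:correspondence} in the other direction: it suffices to show that $\Ex_{\dg}(\A)$ has a greatest element. Here I invoke Chen's result, adapting Rump \cite{Rum19}, that $\A$ admits a greatest exact dg structure $\CS_{\max}$. Then $\CM_{\CS_{\max}}$ is the greatest bi-Serre subcategory, and it serves as the empty meet.

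Combined with arbitrary nonempty meets, this gives a complete lattice, with joins given by
\[
\bigvee_i\CS_i=\mathsf S\Bigl(\bigcap\{\CM \text{ bi-Serre}\mid \CM\supseteq\CM_{\CS_i}\ \forall i\}\Bigr),
\]
an intersection that is nonempty because it contains $\CM_{\CS_{\max}}$.

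If instead one wants to avoid citing Chen (as the abstract suggests, namely providing another construction of $\CS_{\max}$), I would argue as follows. The join of all bi-Serre subcategories is bi-Serre because the defining closure conditions, together with the fact that deflations compose as in the proof of \cref{lem:bi_serre_implies_exact}, are finitary. Concretely, I would try to show that $\CS_{\CM}\cup\CS_{\CM'}$ generates an exact dg structure whose defects stay within the Serre closure. This is the step I expect to be hardest.
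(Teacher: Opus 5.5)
Your argument is valid, but it obtains the greatest element by a different route from the paper. You make explicit that bi-Serre subcategories are closed under nonempty intersections, including the point that $(\bigcap_i\CM_i)^\dagger=\bigcap_i\CM_i^\dagger$ because $\Sigma^2(-)^\vee$ is a duality on $\CC_2(\A)$. The paper leaves this implicit. You then import the top element from Chen's existence theorem \cite[Theorem~7.10]{Che23}. That citation is legitimate; the paper itself relies on it for \cref{thm:correspondence_Chen}. The cost is that ``in particular, it admits a greatest element'' becomes a quotation rather than a consequence, and the proposition no longer gives the independent construction of $\CS_{\max}$ that the paper advertises.

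The paper instead proves that your first, abandoned candidate works. Put $\CU=\bigcup_{\CM\text{ bi-Serre}}\CM$. The Serre subcategory of $\coh H^0(\A)$ generated by $\CU$ is $\Filt(\CU)$: the induced filtrations on subobjects and quotients have subquotients that are subquotients of objects in a single Serre $\CM$, so they stay in $\CU$. It lies in $\CC_2(\A)$ because the vanishing conditions of \cref{def:C2} are closed under extensions, since short exact sequences in the heart are triangles. Since $\Sigma^2(-)^\vee$ is exact on $\CC_2(\A)$, one gets $\Filt(\CU)^\dagger=\Filt(\CU^\dagger)$, which is Serre in $\coh H^0(\A^{\op})$ by the same argument applied to $\A^{\op}$.

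So the step you expected to be hardest takes a few lines on the subcategory side. Your proposed fallback, generating an exact dg structure from $\CS_\CM\cup\CS_{\CM'}$ and controlling its defects, is unnecessary. \cref{thm:correspondence} is exactly what lets you avoid working with conflations here.
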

\begin{proof}
In view of \cref{thm:correspondence}, we only show that the poset of bi-Serre subcategories of $\CC_2(\A)$ has a greatest element.
Put
$$
\CU
=
\bigcup_{\CM\text{: bi-Serre}}\CM
$$
and let $\Filt(\CU)$ be the full subcategory of $\coh H^0(\A)$
consisting of the objects $N$ admitting a finite filtration
$$
0=N_0\sse N_1\sse\cdots\sse N_k=N
$$
such that
$
N_{i+1}/N_i\in\CU
$ for each $0\leq i\leq k-1$.
It is basic that $\Filt(\CU)$ is a Serre subcategory of $\coh H^0(\A)$.
The containment
$
\Filt(\CU)\sse\CC_2(\A)
$
follows immediately from the defining vanishing conditions in \cref{def:C2}.
Since the shifted dg duality is exact on the corresponding hearts, we
have

$$
\Filt(\CU)^\dagger
=
\Filt(\CU^\dagger),
\qquad
\CU^\dagger
=
\bigcup_{\CM\text{: bi-Serre}}\CM^\dagger.
$$
The same argument shows that $\Filt(\CU^\dagger)$ is a Serre
subcategory of $\coh H^0(\A^{\op})$. Therefore,
$$
\CM_{\max}\deff\Filt(\CU)
$$
is a bi-Serre subcategory of $\CC_2(\A)$.
Also, we have a greatest exact dg structure $\CS_{\max}=\CS_{\CM_{\max}}$ of $\Ex_{\dg}(\A)$.
\end{proof}

The existence of the greatest exact dg structure was previously proved in \cite[Theorem~7.10]{Che23}; see also \cite[Theorem~4.18]{Che25survey}.
Thus, under the assumptions of the present section, \cref{prop:lattice_of_exact_dg_structures} provides
an alternative proof of this existence result.
Our argument is independent of Chen's construction of the greatest exact dg structure and follows Enomoto's classification strategy directly at the dg
level.

We should mention that the following Rump--Chen classification theorem follows immediately from \cite[Theorem~4.37]{Che24a} and \cite[Theorem~7.10]{Che23},
and takes a form slightly different from \cref{thm:correspondence}.

Let $\CS_{\max}$ denote the greatest exact dg structure on $\A$.
By \cref{thm:exact_dg_to_extri}, it induces an extriangulated structure
$
(\BE_{\CS_{\max}},\fs_{\CS_{\max}})
$
on $H^0(\A)$.
Moreover, by
\cref{prop:defect_subcat_is_Serre}, its defect category $\defect\BE_{\CS_{\max}}$ is a Serre subcategory of $\coh H^0(\A)$.

\begin{theorem}\label{thm:correspondence_Chen}
Let $\A$ be a connective additive dg category.
Then the following posets are isomorphic:
\begin{enumerate}[label=\textup{(\alph*)}]
\item
the poset of exact dg structures on $\A$;
\item[\textup{(b')}]
the poset of Serre subcategories of
$\defect\BE_{\CS_{\max}}$.
\end{enumerate}
\end{theorem}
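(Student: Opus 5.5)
The plan is to combine Chen's existence result for the greatest exact dg structure with the relative correspondence of \cref{prop:bijection_between_substructures}. Note that \cref{thm:correspondence_Chen} does not assume idempotent completeness, so we cannot use \cref{prop:lattice_of_exact_dg_structures}. Instead we invoke \cite[Theorem~7.10]{Che23}: for any connective additive dg category $\A$ there is a greatest exact dg structure $\CS_{\max}$. In particular, $\CS_{\max}$ is itself an exact dg structure, so $(\A,\CS_{\max})$ is an exact dg category.

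The first key step is to identify the poset in (a) with the poset of exact dg substructures of $(\A,\CS_{\max})$. Let $\CS$ be any exact dg structure on $\A$. Then $\CS\sse\CS_{\max}$ by maximality, and $\CS$ satisfies \ref{EX0}--\ref{EX2} and \ref{EX2op} with respect to itself. Hence $\CS$ is an exact dg substructure of $(\A,\CS_{\max})$ in the sense of \cite[\S4.4]{Che24a}.

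Conversely, an exact dg substructure of $(\A,\CS_{\max})$ is by definition an isomorphism-closed class of conflations of $\CS_{\max}$ that again satisfies the exact dg axioms. Its members are therefore short exact $3$-term complexes, so it is an exact dg structure on $\A$. Both posets are ordered by inclusion, so they coincide as posets.

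Next, apply \cref{prop:bijection_between_substructures} to the exact dg category $(\A,\CS_{\max})$. This gives a poset isomorphism between exact dg substructures of $(\A,\CS_{\max})$ and Serre subcategories of $\defect\BE_{\CS_{\max}}$, passing through the extriangulated substructures of $(H^0(\A),\BE_{\CS_{\max}},\fs_{\CS_{\max}})$ of \cref{thm:exact_dg_to_extri}. Composing this with the identification of the previous step yields the asserted isomorphism (a)$\cong$(b').

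The main point to check carefully is the first step, namely that Chen's notion of exact dg substructure in \cite[Theorem~4.37]{Che24a} imposes nothing beyond being an exact dg structure contained in $\CS_{\max}$. I would verify this by unwinding Chen's definition: a substructure is a subclass closed under isomorphisms satisfying (Ex0)--(Ex2$^{\op}$). If that definition instead involves an extriangulated closedness condition, I would translate it through \cref{prop:bijection_between_substructures}(2). The equivalence of closedness with composition-closure of deflations, recalled from \cite[\S5.1]{INP24}, matches axiom \ref{EX1}, with \ref{EX2} and \ref{EX2op} being automatic for subbifunctors. The remaining steps are formal.
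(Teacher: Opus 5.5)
Your proposal is correct and follows essentially the same route as the paper. The paper likewise takes $\CS_{\max}$ from \cite[Theorem~7.10]{Che23}, so no idempotent completeness is needed. It then notes that every exact dg structure is an exact dg substructure of $(\A,\CS_{\max})$ and concludes by \cref{prop:bijection_between_substructures}; your extra unwinding of Chen's notion of substructure is simply more explicit than the paper, which asserts this identification in one line.
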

\begin{proof}
Since $\CS_{\max}$ is the greatest exact dg structure on $\A$, every exact dg structure on $\A$ is an exact dg substructure of $(\A,\CS_{\max})$.
The assertion therefore follows immediately from \cref{prop:bijection_between_substructures}.
\end{proof}

Comparing \cref{thm:correspondence} with
\cref{thm:correspondence_Chen}, we obtain the following
characterization of the Serre subcategories of
$\defect\BE_{\CS_{\max}}$.

\begin{corollary}\label{cor:bi-Serre_vs_Serre}
Let $\A$ satisfy the assumptions of \cref{thm:correspondence}, and let
$\CM$ be a full subcategory of $\CC_2(\A)$. Then the following
conditions are equivalent:
\begin{enumerate}
\item
$\CM$ is a bi-Serre subcategory of $\CC_2(\A)$;
\item
$\CM$ is contained in $\defect\BE_{\CS_{\max}}$ and is a Serre
subcategory thereof.
\end{enumerate}
In particular, $\defect\BE_{\CS_{\max}}$ is the greatest bi-Serre
subcategory of $\CC_2(\A)$.
\end{corollary}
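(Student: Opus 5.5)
The plan is to compose the two poset isomorphisms already available and compare the images. By \cref{thm:correspondence}, $\mathsf{M}$ is an inclusion-preserving bijection from $\Ex_{\dg}(\A)$ onto the bi-Serre subcategories of $\CC_2(\A)$, with inverse $\mathsf{S}$. By \cref{thm:correspondence_Chen}, equivalently \cref{prop:bijection_between_substructures} applied to $(\A,\CS_{\max})$, exact dg structures $\CS\sse\CS_{\max}$ correspond bijectively to Serre subcategories of $\defect\BE_{\CS_{\max}}$. I will check that this second correspondence is also given by $\CS\mapsto\CM_\CS$, read inside the heart via $H^0$. In the proof of \cref{lem:exact_implies_bi_serre}, $\CM_\CS$ was identified with $\defect\BE_\CS$ under $H^0\colon\CH_\A\xrightarrow{\sim}\Mod H^0(\A)$. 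Enomoto's and Chen's correspondence sends a closed subbifunctor $\BF\sse\BE_{\CS_{\max}}$ to $\defect\BF$, and the dg substructure $\CS$ has $\BE_\CS=\BF$. Hence both bijections are the single assignment $\CS\mapsto\CM_\CS$, viewed in $\Mod H^0(\A)$. The step I expect to need the most care is this bookkeeping: confirming that Chen's isomorphism in \cref{prop:bijection_between_substructures} really is $\CS\mapsto\defect\BE_\CS$. I would unwind \cite[Theorem~4.37]{Che24a} together with \cite[Theorem~B]{Eno21} to verify it.

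(1)$\Rightarrow$(2). If $\CM$ is bi-Serre, put $\CS=\mathsf{S}(\CM)$, which is an exact dg structure by \cref{lem:bi_serre_implies_exact}. Then $\CS\sse\CS_{\max}$, so $\CM=\CM_\CS=\defect\BE_\CS$. Since $\CS$ is a substructure of $\CS_{\max}$, the identification above shows that $\defect\BE_\CS$ is a Serre subcategory of $\defect\BE_{\CS_{\max}}$.

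(2)$\Rightarrow$(1). Let $\CM$ be a Serre subcategory of $\defect\BE_{\CS_{\max}}$. By \cref{thm:correspondence_Chen} there is an exact dg structure $\CS\sse\CS_{\max}$ with $\defect\BE_\CS=\CM$, that is, $\CM_\CS=\CM$. Then $\CM$ is bi-Serre by \cref{lem:exact_implies_bi_serre}. Note that this direction sidesteps the a priori unclear question of whether a Serre subcategory of a Serre subcategory of $\coh H^0(\A)$ has a Serre shifted dual; the exact dg structure supplies that automatically.

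For the final assertion, apply (2)$\Rightarrow$(1) to $\defect\BE_{\CS_{\max}}$ itself, which is a Serre subcategory of itself, so it is bi-Serre. Any bi-Serre $\CM$ is contained in it by (1)$\Rightarrow$(2), so it is the greatest bi-Serre subcategory. Equivalently, it equals $\CM_{\CS_{\max}}=\CM_{\max}$, consistent with \cref{prop:lattice_of_exact_dg_structures}, since $\mathsf{M}$ preserves order and so sends the greatest structure to the greatest bi-Serre subcategory.
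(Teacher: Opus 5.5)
Your proposal is correct and takes essentially the paper's approach: the paper obtains this corollary by directly comparing \cref{thm:correspondence} with \cref{thm:correspondence_Chen}, and your argument is that comparison written out. You also make explicit the one point the paper leaves implicit, namely that both poset isomorphisms are given by the same assignment $\CS\mapsto\CM_\CS\simeq\defect\BE_\CS$ (via $H^0$ on the heart), which is what lets the two image classes be identified rather than merely put in bijection.
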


As an immediate consequence of \cref{thm:correspondence}, we obtain the following criterion.

\begin{corollary}\label{cor:case_of_all=max}
The following conditions are equivalent:
\begin{enumerate}
\item
$\CS_{\max}=\CS_{\all}$.
\item
$\CC_2(\A)$ is a bi-Serre subcategory of itself.
\end{enumerate}
\end{corollary}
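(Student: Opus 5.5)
We deduce \cref{cor:case_of_all=max} from the poset isomorphism of \cref{thm:correspondence}, together with \cref{cor:C2_as_all_defects} and \cref{cor:correspondence_defect_subcategories}.

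First recall that $\CS_{\all}$ belongs to class (1) of \cref{cor:correspondence_defect_subcategories}. It is additive and closed under direct summands by the lemma preceding that corollary, and it is trivially totalization-saturated. By \cref{cor:C2_as_all_defects} we have $\mathsf{M}(\CS_{\all})=\CC_2(\A)$, and conversely $\mathsf{S}(\CC_2(\A))=\CS_{\all}$ by \eqref{diag:short_exact_C2}.

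(1) $\Rightarrow$ (2). If $\CS_{\max}=\CS_{\all}$, then $\CS_{\all}$ is an exact dg structure. By \cref{lem:exact_implies_bi_serre}, $\mathsf{M}(\CS_{\all})=\CC_2(\A)$ is bi-Serre. The phrase ``bi-Serre subcategory of itself'' is read as bi-Serre in the sense of \cref{def:bi-Serre}, i.e.\ a bi-Serre subcategory of $\CC_2(\A)$.

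(2) $\Rightarrow$ (1). If $\CC_2(\A)$ is bi-Serre, then \cref{lem:bi_serre_implies_exact} shows that $\mathsf{S}(\CC_2(\A))=\CS_{\all}$ is an exact dg structure. Every exact dg structure consists of short exact sequences, so it is contained in $\CS_{\all}$. Hence $\CS_{\all}$ is the greatest element, and $\CS_{\max}=\CS_{\all}$. Alternatively, one can argue on the lattice side: $\CC_2(\A)$ contains every bi-Serre subcategory, so when it is itself bi-Serre it is the greatest one, and $\mathsf{S}$ sends it to $\CS_{\max}$.

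No step is a genuine obstacle. The only points needing care are the identifications $\mathsf{M}(\CS_{\all})=\CC_2(\A)$ and $\mathsf{S}(\CC_2(\A))=\CS_{\all}$, both of which follow directly from the cited results.
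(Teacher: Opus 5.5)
Your proof is correct and follows essentially the paper's route. The paper also starts from $\mathsf{S}(\CC_2(\A))=\CS_{\all}$ and reads off both directions from the poset isomorphism of \cref{thm:correspondence}, using that $\CS_{\max}$ corresponds to the greatest bi-Serre subcategory; this is your ``lattice-side'' alternative. Your explicit use of \cref{lem:exact_implies_bi_serre,lem:bi_serre_implies_exact}, together with $\mathsf{M}(\CS_{\all})=\CC_2(\A)$ from \cref{cor:C2_as_all_defects}, simply unpacks that isomorphism.
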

\begin{proof}
By definition, we have $\mathsf{S}\bigl(\CC_2(\A)\bigr)=\CS_{\all}$.
The assertion therefore follows from \cref{thm:correspondence}, since
$\CS_{\max}$ corresponds to the greatest bi-Serre subcategory of $\CC_2(\A)$.
\end{proof}

\begin{remark}
The equality $\CS_{\max}=\CS_{\all}$ holds in several standard situations.
\begin{enumerate}
\item 
If $\A$ is stable, then $(\A,\CS_{\all})$ is an exact dg category
by \cite[Proposition~6.4]{Che24a}, and hence
$\CS_{\max}=\CS_{\all}$.
\item
If $\A$ is an abelian $d$-truncated dg category, then $(\A,\CS_{\all})$ is an exact dg category by \cite[Proposition~3.14]{Moc25},
and hence $\CS_{\max}=\CS_{\all}$.
\end{enumerate}
Thus, in both cases, $\CC_2(\A)$ is bi-Serre.
\end{remark}

We conclude with a natural question suggested by the preceding
classification.

\begin{question}
Which Quillen exact structures on $H^0(\A)$ lift to exact dg structures on $\A$? More generally, how can one describe the relationship between the posets
\[
\Ex_{\dg}(\A)
\qquad\text{and}\qquad
\Ex_{\mathrm Q}\bigl(H^0(\A)\bigr)?
\]
Here $\Ex_{\mathrm Q}\bigl(H^0(\A)\bigr)$ denotes the poset of Quillen exact structures on $H^0(\A)$, ordered by inclusion.
\end{question}

\medskip
\noindent
{\bf Acknowledgement.}
Y.\ O.\ is supported by JSPS KAKENHI (grant JP22K13893).

%%%%%%%%%%%%%%%%%%%%%%%%%%%%%%%%%%%%%%%%%%%%%%%%%%%%%%%%

%%%%%%%%%%%%%%%%%%%%%%%%%%%%%%%%%%%%%%%%%%%%%%%%%%%%%%%%

\end{document}